\documentclass[11pt,reqno]{amsart}
\usepackage{amsmath,amscd,amssymb,amsfonts,amsthm,courier,relsize,bm}
\usepackage{hyperref,enumerate,mathrsfs,mathtools,slashed}
\usepackage{scalerel}
\usepackage[all]{xy}

\usepackage{xcolor}  %the next few lines set the colors of the hyperlinks, and remove the color boxes
\hypersetup{
    colorlinks,
    linkcolor={red!50!black},
    citecolor={blue!70!black},
    urlcolor={blue!80!black}
}

\textwidth15.8 cm
\oddsidemargin.4cm
\evensidemargin.4cm

\newtheorem{theorem}{Theorem}[section]
\newtheorem{corollary}[theorem]{Corollary}
\newtheorem{proposition}[theorem]{Proposition}

\newtheorem{lemma}[theorem]{Lemma}
\theoremstyle{definition}    
\newtheorem{definition}[theorem]{Definition}
\theoremstyle{remark}

\newtheorem{remark}[theorem]{Remark}

\newtheorem{example}[theorem]{Example}

%commands
\newcommand{\pair}[2]{\langle #1, #2 \rangle}
\newcommand{\ignore}[1]{}
\newcommand{\matr}[4]{\left(\begin{array}{cc}#1&#2\\#3&#4\end{array}\right)}
\newcommand{\ol}[1]{\overline{#1}}

\newcommand{\sul}[1]{\underline{#1\mkern-4mu}\mkern4mu } %short underline

\renewcommand{\sf}[1]{\mathsf{#1}}
\newcommand{\wh}[1]{\widehat{#1}}
\newcommand{\scr}[1]{\mathscr{#1}}
\newcommand{\mf}[1]{\mathfrak{#1}}

\newcommand{\tn}[1]{\textnormal{#1}}
\renewcommand{\i}{{\mathrm{i}}}
\renewcommand{\c}{{\mathrm{c}}}

%angle brackets

%special

\def\Dirac{\ensuremath{\slashed{\D}}}

\def\d{\ensuremath{\mathrm{d}}}

%lie algebra
\def\Ad{\ensuremath{\textnormal{Ad}}}
\def\ad{\ensuremath{\textnormal{ad}}}
\def\g{\ensuremath{\mathfrak{g}}}

\def\t{\ensuremath{\mathfrak{t}}}
\def\n{\ensuremath{\mathfrak{n}}}

\def\h{\ensuremath{\mathfrak{h}}}
\def\hvee{\ensuremath{\textnormal{h}^\vee}}

%caligraphic
\def\A{\ensuremath{\mathcal{A}}}
\def\B{\ensuremath{\mathcal{B}}}
\def\C{\ensuremath{\mathcal{C}}}
\def\D{\ensuremath{\mathcal{D}}}
\def\E{\ensuremath{\mathcal{E}}}

\def\G{\ensuremath{\mathcal{G}}}
\def\H{\ensuremath{\mathcal{H}}}
\def\I{\ensuremath{\mathcal{I}}}
\def\J{\ensuremath{\mathcal{J}}}

\def\L{\ensuremath{\mathcal{L}}}
\def\M{\ensuremath{\mathcal{M}}}

\def\S{\ensuremath{\mathcal{S}}}

\def\U{\ensuremath{\mathcal{U}}}

\def\X{\ensuremath{\mathcal{X}}}

\def\Z{\ensuremath{\mathcal{Z}}}

%black board bold
\def\bC{\ensuremath{\mathbb{C}}}
\def\bR{\ensuremath{\mathbb{R}}}
\def\bZ{\ensuremath{\mathbb{Z}}}

\def\bD{\ensuremath{\mathbb{D}}}

%maps
\def\End{\ensuremath{\textnormal{End}}}
\def\Hom{\ensuremath{\textnormal{Hom}}}

\def\ker{\ensuremath{\textnormal{ker}}}
\def\supp{\ensuremath{\textnormal{supp}}}
\def\id{\ensuremath{\textnormal{id}}}
\def\Aut{\ensuremath{\textnormal{Aut}}}

%text
\def\Tr{\ensuremath{\textnormal{Tr}}}

\def\pr{\ensuremath{\textnormal{pr}}}
\def\dim{\ensuremath{\textnormal{dim}}}
\def\Sym{\ensuremath{\textnormal{Sym}}}
\def\dom{\ensuremath{\textnormal{dom}}}

\def\op{\ensuremath{\textnormal{op}}}
\def\Cl{\ensuremath{\textnormal{Cl}}}

\def\index{\ensuremath{\textnormal{index}}}

\def\sK{\ensuremath{\textnormal{K}}}
\def\aff{\ensuremath{\textnormal{aff}}}

%characteristic classes
\def\Ch{\ensuremath{\textnormal{Ch}}}
\def\Td{\ensuremath{\textnormal{Td}}}
\def\Eul{\ensuremath{\textnormal{Eul}}}

\def\Ahat{\ensuremath{\widehat{\textnormal{A}}}}

\def\hol{\ensuremath{\tn{hol}}}
\def\hotimes{\ensuremath{\wh{\otimes}}}

\def\ft{\ensuremath{[\![t]\!]}}
\def\aff{\ensuremath{\tn{aff}}}
\def\reg{\ensuremath{\tn{reg}}}

\usepackage[outline]{contour}

\def\DiracE{\ensuremath{\Dirac{}^{\E}}}
\def\DiracEs{\ensuremath{\Dirac{}^{\E,2}}}
\def\DiracQ{\ensuremath{\Dirac{}^Q}}
\def\DiracQs{\ensuremath{\Dirac{}^{Q,2}}}

\def\bkappa{\ensuremath{\bar{\kappa}}}

\title{Index formula for Hamiltonian loop group spaces}
\author{Yiannis Loizides}
\address{Department of Mathematics, George Mason University, 
Fairfax, Virginia, U.S.}
\email{yloizide@gmu.edu}

\begin{document}
\sloppy
\maketitle

\begin{abstract}
We study K-theory classes of Hamiltonian loop group spaces represented by admissible Fredholm complexes. We prove various equivariant index formulae in this context. In a sequel to this article we show that, when specialized to a family of non-local elliptic boundary value problems over the moduli space of framed flat connections on a surface, one obtains a gauge theory analogue of the Teleman-Woodward index formula.
\end{abstract}

\section{Introduction}
Let $G$ be a compact connected simply connected Lie group with Lie algebra $\g$. The loop group $LG$ of $G$ is the set of maps (smooth or perhaps of a fixed sufficiently large Sobolev class) from the circle into $G$, with the group operation given by pointwise multiplication. The geometry and representation theory of $LG$ have been studied extensively. There is a successful theory of projective positive energy representations of $LG$, closely related to representations of affine Kac-Moody Lie algebras. These representations can be constructed using the orbit method as spaces of holomorphic sections of line bundles over coadjoint orbits of $LG$, cf. \cite{PressleySegal}.

An important theme in symplectic geometry is the generalization from coadjoint orbits to arbitrary symplectic manifolds equipped with a Hamiltonian action. Systematic study of Hamiltonian $LG$-spaces $\M$ began with work of Meinrenken and Woodward about 20 years ago, cf. \cite{MWVerlindeFactorization}. Many of the fundamental results on finite dimensional Hamiltonian spaces now have analogues for Hamiltonian $LG$-spaces with proper moment map, including for example, the convexity theorem, Duistermaat-Heckman theory, Kirwan surjectivity, abelian and non-abelian localization, the quantization-commutes-with-reduction theorem, and the classification of multiplicity-free spaces. See for example \cite{LecturesGroupValued} for an overview of some of these developments.

There is an important collection of examples coming from 2D gauge theory. Let $\Sigma$ be a compact connected surface of genus $\rm{g}$ with $b\ge 1$ boundary components, and let $\sul{G}=G^b$. Atiyah and Bott \cite{AtiyahBottYangMills} observed that the space of $G$-connections $\A_\Sigma$ on $\Sigma$ carries a symplectic form and the action of the gauge group $\G_\Sigma$ is Hamiltonian. The symplectic quotient at $0$ is the (usually singular) moduli space of flat connections on the closed surface obtained by capping off the boundary of $\Sigma$. Reduction at levels other than $0$ produces more general moduli spaces of flat connections on $\Sigma$ with holonomy around the boundary components lying in prescribed conjugacy classes of $G$. Following Donaldson \cite{donaldson1992boundary}, one can instead reduce in stages, taking the symplectic quotient by the subgroup $\G_{\Sigma,\partial \Sigma}$ of gauge transformations that are trivial along the boundary. The result is a Hamiltonian $L\sul{G}$-space $\M_\Sigma=\A_\Sigma^\flat/\G_{\Sigma,\partial \Sigma}$, the moduli space of flat connections on $\Sigma$ with framing along the boundary. The moment map is induced by restriction of the connection to the boundary of $\Sigma$. Further symplectic reduction with respect to the $L\sul{G}$ action then recovers the moduli spaces of flat connections on $\Sigma$ with holonomy around the boundary components lying in prescribed conjugacy classes of $G$.

When specialized to the moduli space examples, the general theorems for Hamiltonian loop group spaces yield important results on moduli spaces of flat connections. For example, the general Duistermaat-Heckman theory for Hamiltonian loop group spaces leads to a proof of Witten's formulas for intersection pairings on moduli spaces of flat connections \cite{Meinrenken2005}. The general quantization-commutes-with-reduction theorem for Hamiltonian loop group spaces leads to a proof of the Verlinde formula for the Euler characteristic of a line bundle on moduli spaces of flat connections \cite{AMWVerlinde}. This article and its sequel are a further addition to this theory: the index formula for Hamiltonian loop group spaces that we prove in this article will be shown, in the sequel, to lead to a proof of the Teleman-Woodward formulae for K-theoretic intersection pairings on moduli spaces of flat connections.

The main object of study are what we shall call \emph{admissible} classes in the $G$-equivariant (resp. $T$-equivariant, for $T\subset G$ a maximal torus) K-theory $K_G(\M)$ (resp. $K_T(\M)$) of a Hamiltonian loop group space $\M$. Here $G$ acts on the $LG$-space $\M$ via the embedding of $G$ in $LG$ as the subgroup of constant loops. A K-class is admissible if it is represented by a Fredholm complex satisfying an equivariant bounded geometry condition; the precise definition is given in the body of the article. Such classes form an $R(G)$-subalgebra $K^\ad_G(\M)\subset K_G(M)$ (resp. $R(T)$-subalgebra $K^\ad_T(\M)\subset K_T(M)$). K-classes represented by finite-rank $LG$-equivariant vector bundles are admissible, but the definition is considerably more general. In a sequel to this article, we introduce a large collection of examples in case $\M=\M_\Sigma$, represented by families of elliptic boundary-value problems on $\Sigma$, which cannot be represented by $LG$-equivariant vector bundles. We refer to these as Atiyah-Bott K-theory classes, since they are a generalization of K-theory classes described by Atiyah and Bott \cite{AtiyahBottYangMills} for the moduli space associated to a closed surface.

Let $R(T)=\bZ[\Lambda]$ be the representation ring of a maximal torus $T$. Let $R^{-\infty}(T)=\bZ^\Lambda$ be the $R(T)$-module of formal infinite Fourier series. To extract invariants from admissible K-theory classes, we construct a map of $R(T)$-modules
\begin{equation} 
\label{e:indexmorphism}
\index_T \colon K^{\ad}_T(\M)\rightarrow R^{-\infty}(T) 
\end{equation}
out of the $R(T)$-subalgebra $K^{\ad}_T(\M)\subset K_T(\M)$ of $T$-equivariant admissible classes. The $\index_T$ of a class in the image of the natural map $K^\ad_G(\M)\rightarrow K^\ad_T(\M)$ has an additional Weyl group antisymmetry property, which allows one to formally induct up to a similar map
\[ \index_G\colon K^\ad_G(\M)\rightarrow R^{-\infty}(G).\] 
Conceptually \eqref{e:indexmorphism} pulls K-theory classes back to a finite-dimensional, non-compact, K-oriented (i.e. Spin$_c$) submanifold $X$ of $\M$, and then pushes forward to a point. Pushforwards in equivariant K-theory are implemented at the level of cycles by coupling vector bundles, or more general Fredholm families, to the Spin$_c$ Dirac operator, and then taking the equivariant index. The admissibility condition is used to show that the $L^2$-index on the non-compact manifold has finite $T$-multiplicities. The submanifold $X$, which was first described in \cite{LMSspinor}, plays a similar role to the extended moduli spaces used by Jeffrey and Kirwan \cite{JeffreyKirwanModuli}. %$X$ carries a proper and cocompact action of the subgroup $T\times \Pi\subset LG$ consisting of loops that are closed geodesics in $T$ (here $\Pi$ is the lattice of group homomorphisms $S^1\rightarrow T$). Admissibility is defined in terms of the action this subgroup on the data for a cycle representing the K-theory class.

The importance of the index map \eqref{e:indexmorphism} comes from the fact that it may be used to compute K-theoretic intersection pairings on finite dimensional symplectic quotients of $\M$ (for $\M=\M_\Sigma$, these quotients are moduli spaces of flat connections on $\Sigma$ with boundary holonomies in prescribed conjugacy classes). The relationship between the index map and K-theoretic intersection pairings stems from the non-abelian localization formula, and works as follows.

A Spin$_c$ structure can be twisted by tensor powers of a line bundle. A natural choice in this context, when it exists, is the pull back to $X$ of a prequantum line bundle $L\rightarrow \M$. In this case one gets a sequence of maps $\index_G^k\colon K^{\ad}_G(\M)\rightarrow R^{-\infty}(G)$ given as before except twisting the Spin$_c$ structure by $L^k$. Suppose $\sf{E} \in K^{\ad}_G(\M)$. Fix a fundamental alcove $\mf{A}^*\subset \t^*_+$, let $\xi \in \mf{A}^*$ be a regular value of the moment map and such that $k\xi \in \Lambda_+$ for some $k \in \bZ_{>0}$. Assume furthermore that $(LG)_\xi\subset G$ (this occurs if and only if $\xi$ is not in the back wall of the alcove). Then for $k\gg 0$, the multiplicity of the irreducible $G$-representation $V_{k\xi}$ with highest weight $k\xi$ in $\index_G^k(\sf{E})$ equals the index of the K-class $\sf{E}_{k\xi}$ induced by $\sf{E}\otimes L^k\otimes \bC_{-k\xi}$ on the finite-dimensional compact symplectic orbifold $\M_\xi=\mu_\M^{-1}(\xi)/(LG)_\xi$. This is a consequence of the non-abelian localization formula for the index to be discussed below. On the other hand, by the index formula for orbifolds, the index of $\sf{E}_{k\xi}$ is a quasi-polynomial function of $k$, hence is determined by its values for $k\gg 0$. In this sense the index homomorphism \eqref{e:indexmorphism} computes K-theoretic intersection pairings on $\M_\xi$. In case $\sf{E}=1$, the $[Q,R]=0$ theorem for Hamiltonian loop group spaces \cite{AMWVerlinde} (see also \cite{LSQuantLG, LSWittenDef, YiannisThesis}) says that equality between the index on $\M_\xi$ and the multiplicity of $V_{k\xi}$ holds for any $k>0$.

We prove three different formulae for the index \eqref{e:indexmorphism}. The first is a non-abelian localization formula (Theorem \ref{t:nonabelloc}). In its original form non-abelian localization refers to localizing the calculation of an integral (in cohomology) or of the index of an operator (in K-theory) on a Hamiltonian $G$-space to the critical set of the norm-square of the moment map $\|\mu\|^2$. There is a large literature on non-abelian localization in various forms, for example \cite{Kirwan, WittenNonAbelian, Paradan98, TianZhang, MaZhangVergneConj} amongst many others; in the more specific context of Hamiltonian loop group spaces references include \cite{BottTolmanWeitsman, WoodwardHeatFlow, DHNormSquare, LSWittenDef, YiannisThesis}. We prove our non-abelian localization formula by adapting a technique of Bismut-Lebeau \cite[Chapter IX]{BismutLebeau} to analyse the resolvents of a 1-parameter family of operators in the limit as the parameter goes to infinity.

Next we prove a Kirillov-Berline-Vergne delocalized formula (Theorem \ref{t:deloc}) expressing the equivariant index as an integral of $T$-equivariant characteristic forms over the submanifold $X\subset \M$. An attractive feature of this formula is that the symmetries of the index under the action of the affine Weyl group are realized concretely as symmetries of the characteristic forms. Finally we prove an Atiyah-Bott-Segal-Singer fixed-point formula (Theorem \ref{t:abelianloc}), which involves only integration of characteristic forms over a finite-dimensional compact fixed point set. The result is a generalization of the fixed-point formula of \cite{AMWVerlinde}, which covered the important special case $\sf{E}=1$ relevant to the proof of the Verlinde formula.

The main results can be combined and summarized in two theorems. More detailed explanation of terminology and notation will be given in the main body of the article.
\begin{theorem}
There is a well-defined equivariant index map
\[ \index_T\colon K^\ad_T(\M)\rightarrow R^{-\infty}(T),\]
sending an admissible K-theory class $\sf{E}$ represented by the admissible cycle $(\E,\nabla^\E,Q)$ to the $T$-equivariant $L^2$-index of $\Dirac^Q$. The index satisfies a non-abelian localization formula
\begin{equation} 
\label{e:nonabellocintro}
\index_T(\sf{E})=\sum_{\beta \in \B}\index_T(\sigma_\beta\otimes \sf{E}_\beta), \qquad \B=\{\beta \in \t\mid X^\beta\cap \phi^{-1}(\beta)\ne \emptyset\}.
\end{equation}
\end{theorem}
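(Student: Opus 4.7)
The plan has two parts: first, to verify that $\index_T$ is well-defined on admissible classes; second, to establish the non-abelian localization formula \eqref{e:nonabellocintro} by means of a Witten-type one-parameter deformation of $\DiracQ$ whose resolvent is analyzed in the spirit of Bismut--Lebeau.

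For the construction of $\index_T$, one works on the finite-dimensional non-compact $\tn{Spin}_c$ submanifold $X \subset \M$ introduced in \cite{LMSspinor}, together with its restricted moment map $\phi : X \to \t$. Given an admissible cycle $(\E, \nabla^\E, Q)$, the coupled Dirac operator $\DiracQ$ is a formally self-adjoint Dirac-type operator on $L^2$-sections of a $\bZ_2$-graded bundle over $X$. The admissibility condition combined with bounded geometry on $X$ yields a lower bound of the form
\[ \DiracQs + C \;\geq\; \|\phi\|^2 + R,\]
where the remainder $R$ has compact resolvent on each $T$-weight space. From this one deduces essential self-adjointness of $\DiracQ$ and, crucially, finiteness of the $T$-multiplicities of $\ker(\DiracQ)$. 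One then defines $\index_T(\sf{E}) \in R^{-\infty}(T)$ as the virtual character of this kernel, decomposed by the $\bZ_2$-grading. Independence from the choice of admissible representative is reduced to invariance under a short list of elementary moves (change of connection, compactly supported perturbation of $Q$, and stabilization by contractible cycles), each controlled by a spectral-flow argument that exploits the weight-finiteness just established.

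To prove the localization formula, introduce the one-parameter family
\[ D_s \;=\; \DiracQ + s\,\mathbf{c}(v_\phi), \qquad s \geq 0,\]
where $v_\phi$ is the Kirwan vector field on $X$ whose value at $x$ is the fundamental vector field of $\phi(x) \in \t$. Its zero set is precisely the critical set $\bigsqcup_{\beta \in \B} X^\beta \cap \phi^{-1}(\beta)$ of $\tfrac{1}{2}\|\phi\|^2$. Squaring gives
\[ D_s^2 \;=\; \DiracQs \;+\; s^2\|v_\phi\|^2 \;+\; s A\]
for a zeroth-order equivariant endomorphism $A$, so that outside any neighbourhood of the critical set the positive term $s^2\|v_\phi\|^2$ dominates and $D_s$ becomes arbitrarily invertible. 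Following the resolvent-comparison technique of \cite[Ch.~IX]{BismutLebeau}, uniform bounds on $(D_s - i\lambda)^{-1}$ as $s \to \infty$ allow one to split $\ker(D_s)$ into a sum of pieces concentrated in small neighbourhoods $U_\beta$ of the individual critical components. Invariance of the index under homotopy in $s$ then expresses $\index_T(\sf{E})$ as a sum over $\beta \in \B$ of local contributions, and each contribution is identified---via a transverse Gaussian model for the normal fluctuations of $v_\phi$ together with a push-forward along $X^\beta \cap \phi^{-1}(\beta)$---with $\index_T(\sigma_\beta \otimes \sf{E}_\beta)$, where $\sigma_\beta$ is the transverse Bott symbol and $\sf{E}_\beta$ is the associated localized admissible cycle.

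The main obstacle will be the non-compactness of $X$, and in fact of the critical components themselves, which places the argument outside the original Bismut--Lebeau setting. Two points will require particular care. First, one must rule out any loss of index at infinity as $s \to \infty$: this should follow from properness of $\phi$ together with the admissibility of $(\E, \nabla^\E, Q)$, which jointly produce an exhaustion by compacta on which $D_s^2$ is bounded below uniformly in $s$ on each $T$-weight space. Second, one must verify that the localized cycle at each $\beta$-component genuinely represents an admissible $T$-equivariant class, so that $\index_T(\sigma_\beta \otimes \sf{E}_\beta)$ has the required meaning; this is a direct but non-trivial check of the bounded-geometry axioms for the restricted cycle. Once these two points are secured, the remaining steps follow the Bismut--Lebeau scheme largely verbatim.
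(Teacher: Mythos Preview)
Your overall strategy---Witten deformation followed by a Bismut--Lebeau resolvent analysis---matches the paper's, but several of the key steps are mis-specified in ways that would cause the argument to fail.

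First, the claimed inequality $\DiracQs + C \geq \|\phi\|^2 + R$ does not hold. The operator $\DiracQ = \DiracE + Q$ contains no term involving $\phi$; a $\|\phi\|^2$-type contribution appears only \emph{after} the Witten deformation is added, as the leading part of the cross-term $[\DiracE,\Phi]$. The paper proves compactness of the resolvent on each $T$-isotypical component by an entirely different mechanism: finite propagation of $h(\DiracE)$, a $\Pi$-translated partition of unity, and the level-$\ell$ commutation relation $u\hat{\eta}u^{-1}\hat{\eta}^{-1}=u^{\ell\eta}$, which shifts weights by $\ell\eta$ and thereby converts $\Pi$-noncompactness into decay in the weight parameter. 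You will need this $\Pi$-shift argument; a Bochner bound alone will not produce it.

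Second, the paper does not prove independence from the choice of admissible representative by elementary moves and spectral flow. Rather, independence is a \emph{consequence} of the non-abelian localization formula: once $\index_T(\DiracQ)=\sum_\beta \index_T(\sigma_\beta\otimes \sf{E}_\beta)$ is established, the right-hand side manifestly depends only on the K-theory classes $\sf{E}_\beta$, hence on $\sf{E}$. So the logical order is reversed from what you propose.

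Third, your deformation $\DiracQ + s\,\c(v_\phi)$ uses the unbounded Kirwan vector field on the non-compact $X$, which creates domain and self-adjointness problems. The paper uses the \emph{bounded} rescaling $\bar{\kappa}=\bar{\phi}(x)_X(x)$ globally, together with the potential $\c_{\t^\perp}(\nu)$, and reserves the unbounded $\kappa$ (further weighted by $\rho^{-1}$) for the localized operators $\DiracQ_{U_\beta,s}$ on the relatively compact neighbourhoods $U_\beta$. The Bismut--Lebeau step is then implemented not as a direct resolvent bound on $D_s$ but via a block $2\times 2$ operator on $\H\oplus \H_U^{\op}$ with off-diagonal coupling $s\gamma m_\chi$, whose square one inverts explicitly.

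Finally, your diagnosis of the main obstacle is off: each individual $Z_\beta = X^\beta\cap\phi^{-1}(\beta)\cap\nu^{-1}(0)$ \emph{is} compact, by properness of $\phi$. The genuine difficulty is that there are infinitely many such components, and controlling this uniformly requires the $\Pi$-quasi-periodicity of $\Z$ (Proposition~\ref{p:weakperiodicity}) and the lower bound $|\kappa|\geq c$ off $\U$ (Proposition~\ref{p:Kirwanpos}), neither of which you mention.
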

\noindent Here $\Dirac^Q$ is the operator on $X\subset \M$ obtained by coupling $(\E,\nabla^\E,Q)$ to the Dirac-type operator $\Dirac$ on $X$. On the right hand side of \eqref{e:nonabellocintro}, the sum over $\B$ is infinite but locally finite when the summands are viewed as functions on the weight lattice $\Lambda$, and $\phi$ is a moment map. Moreover $\sigma_\beta\otimes \sf{E}_\beta$ is a $T$-transversely elliptic symbol with non-invertibility set contained in $X^\beta\cap \phi^{-1}(\beta)$, and its index is taken in the sense of Atiyah \cite{AtiyahTransEll}.

The second theorem applies under more restrictive hypotheses. In particular for the fixed-point formula, we assume existence of a formal series $\sf{E}_t \in \bC K^\ad_T(\M)\ft$ of admissible classes that changes by a multiplicative factor in $1+t\bC R(T)\ft\subset GL_1(\bC R(T)\ft)$ under pullback by elements of the subgroup $\Pi=\Hom(S^1,T)\subset LG$. The group $GL_1(\bC R(T)\ft)$ acts on $\bC K^\ad_T(\M)\ft$ via the $R(T)$-module action on $K^\ad_T(\M)\ft$. Twisted $\Pi$-equivariance is thus described by a group homomorphism 
\[ \Pi \rightarrow 1+t\bC R(T)\ft, \qquad \eta \mapsto \exp(tv_t(u)\cdot \eta), \] 
where $v_t \in \t_\bC\otimes \bC R(T)\ft$ is a formal series of vector fields on the complexified torus $T_\bC$. Twisted equivariance can be thought of as a formal deformation of strict $\Pi$-equivariance that one has at $t=0$. This condition is difficult to motivate here, but we will see it appear naturally in the study of the Atiyah-Bott classes for $\M=\M_\Sigma$ in the sequel to this article. It is closely related to a famous change of variables used by Witten \cite{Witten} (see also \cite{JeffreyKirwanModuli, Meinrenken2005}) for the calculation of cohomological intersection pairings on moduli spaces of flat connections, and also plays a key role in the article \cite{TelemanWoodward} of Teleman and Woodward.
\begin{theorem}
Let $\sf{E}_t\in \bC K^\ad_T(\M)\ft$ be a formal series of admissible classes that admits infinitesimally twisted $T\times \Pi$-equivariant Chern character forms. Then
\[ \index_T(\sf{E}_t)(u\exp(\xi))=\int_{X^u}\Ch^u(\sf{E}_t,\xi)\A\S^u(\sigma,\xi), \]
for $\xi \in \t$ sufficiently small. Let $\Phi_t(u)=u\exp(t\ell^{-1}v_t(u))$ where $v_t \in \t_\bC\otimes \bC R(T)\ft$ is the infinitesimal twist. Let $g_t \in \J^\infty_g T_\bC$ be the solution of the fixed-point equation $\Phi_t(g_t)=g$. Then
\[ \index_{T}(\sf{E}_t)(u)=\frac{1}{|T_\ell|}\sum_{g \in T_\ell}\frac{\delta_{g_t}(u)}{\det(1+t\ell^{-1}\d v_t(g_t))}\int_{X^g/\Pi} \Ch^{g_t}(\sf{E}_t)\A\S^{g_t}(\sigma). \]
\end{theorem}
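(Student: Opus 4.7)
My plan is to deduce both formulas from Theorems \ref{t:deloc} and \ref{t:abelianloc}, which were proved earlier in the article for a single admissible class $\sf{E}\in K^\ad_T(\M)$. Extending to a formal power series $\sf{E}_t \in \bC K^\ad_T(\M)\ft$ is formally innocuous if one works coefficient-by-coefficient in $t$, but the twisted $\Pi$-equivariance hypothesis introduces a genuine change of variables in the fixed-point formula that must be tracked carefully.

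For the delocalized formula the argument is a direct application of Theorem \ref{t:deloc}: that result expresses $\index_T(\sf{E})(u\exp(\xi))$, for a single $\sf{E}\in K^\ad_T(\M)$ and sufficiently small $\xi\in\t$, as $\int_{X^u}\Ch^u(\sf{E},\xi)\A\S^u(\sigma,\xi)$. Since both sides are $\bC\ft$-linear in $\sf{E}$, applying the identity order-by-order in $t$ yields the claimed formula for $\sf{E}_t$. The twisted equivariance plays no role at this stage, since the $\Pi$-action does not enter $\index_T$ on the nose; it will, however, be essential for the fixed-point formula.

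For the fixed-point formula I would start from Theorem \ref{t:abelianloc}, which expresses $\index_T(\sf{E})$ as a distribution on $T$ whose support is determined by fixed points of elements of $T$ in $X$. The new ingredient is that $\sf{E}_t$ is not strictly $\Pi$-invariant: pullback by $\eta\in\Pi$ scales the class by the formal factor $\exp(tv_t(u)\cdot\eta)\in 1+t\bC R(T)\ft$. If $\sf{E}_t$ were strictly $\Pi$-invariant, the contributions from $\Pi$-translates of fixed components of $X$ would pair up in the standard way for orbifold quotients, collapsing to a single integral over $X^g/\Pi$ for each $g$ in the finite group $T_\ell$, weighted by the averaging factor $|T_\ell|^{-1}$. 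The twist deforms this picture: at each component $X^g$ the effective character by which $u\in T$ acts is shifted by a factor coming from the infinitesimal twist; equivalently, the delta-like contribution previously supported at $u=g$ is shifted to $u=\Phi_t(g_t)$, where $g_t$ is the unique formal solution of $\Phi_t(g_t)=g$ in $\J^\infty_g T_\bC$.

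The Jacobian factor $\det(1+t\ell^{-1}\d v_t(g_t))^{-1}$ in the denominator is then the derivative of the change of variable $g\mapsto g_t$, invertible to all orders in $t$ because $\Phi_t$ is a formal $O(t)$-perturbation of the identity on $T_\bC$. Correspondingly the Chern character and $\A\S$ forms must be evaluated at $g_t$ rather than at $g$, which is what $\Ch^{g_t}$ and $\A\S^{g_t}$ denote. The main obstacle is the bookkeeping in the previous paragraph: concretely, one must show that after summing over $\Pi$-translates of each fixed component and implementing the shift $g\mapsto g_t$, the infinitesimally twisted $T\times\Pi$-equivariant Chern character forms on $X$ reassemble into a single integral over $X^g/\Pi$ with the correct evaluation point and the stated Jacobian. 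I expect this step to proceed by expanding everything order-by-order in $t$, comparing with the $t=0$ case (which reduces to a standard $\Pi$-equivariant ABSS formula on $X/\Pi$), and verifying that each successive order is generated by the infinitesimal twist data $v_t$.
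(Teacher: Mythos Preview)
Your argument for the delocalized formula is correct and matches the paper: Theorem~\ref{t:deloc} is proved for a single admissible class, and one applies it coefficient-by-coefficient in $t$.

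For the fixed-point formula there is a genuine gap. You propose to ``start from Theorem~\ref{t:abelianloc}'', but Theorem~\ref{t:abelianloc} \emph{is} the fixed-point formula you are trying to prove; there is no separate single-class version of it in the paper to deform. More importantly, your sketch omits the two mechanisms that actually drive the proof. The paper begins from the delocalized integral over $X^u$, chooses a fundamental domain $X_\diamond^u$ for the $\Pi$-action, and uses the twisted equivariance of $\Ch^u(\sf{E}_t,\xi)$ together with \eqref{e:ASPiEquiv} to factor the sum over $\Pi$-translates as
\[
\index_T(\sf{E}_t)(h)=\Big(\sum_{\eta\in\Pi}\Phi_t(h)^{\ell\eta}\Big)\int_{X_\diamond^u}\alpha^u(\xi).
\]
The sum $\sum_{\eta\in\Pi}\Phi_t(h)^{\ell\eta}$ is then evaluated by the deformed Poisson summation formula (Theorem~\ref{t:qPoisson}); this is precisely where the finite group $T_\ell$, the delta distributions $\delta_{g_t}$, and the Jacobian $\det(1+t\ell^{-1}\d v_t(g_t))^{-1}$ all appear. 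Finally, the delta constraint forces $\xi=\xi_t$, at which value the integrand becomes honestly $\Pi$-invariant, and a formal-series version of abelian localization (Lemma~\ref{l:qabelloc}) reduces the integral over $X^u/\Pi$ to one over $X^g/\Pi$.

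Your intuition that the twist ``shifts the delta from $g$ to $g_t$'' and produces a Jacobian is correct, but the mechanism is Poisson summation, not an order-by-order comparison with a $t=0$ base case. Expanding order-by-order would work in principle but would amount to reproving Theorem~\ref{t:qPoisson} term by term, which is considerably harder than the contour-deformation argument the paper gives.
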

\noindent Here $\A\S^u(\sigma,\xi)$ denotes the equivariant Atiyah-Singer integrand for an elliptic symbol $\sigma$ (it may also be written entirely in terms of standard characteristic classes). The integral in the Kirillov-Berline-Vergne formula converges in the sense of distributions (in the variable $\xi \in \t$). The variable $t$ is a formal parameter, and $g_t$ is an $\infty$-jet of a curve at $g \in T_\bC$, or equivalently, a $\bC\ft$-point in $T_\bC$. $\ell>0$ is the level of the spinor bundle, and $T_\ell=\ell^{-1}\Lambda/\Pi\subset T$ is a finite subgroup. There are similar statements for $\index_G\colon K^\ad_G(\M)\rightarrow R^{-\infty}(G)$ exhibiting the additional symmetry.

In a sequel to this article, we show that when the fixed-point formula is applied to the Atiyah-Bott classes on $\M=\M_\Sigma$, one obtains formulae precisely matching the algebro-geometric index formulae of Teleman and Woodward \cite{TelemanWoodward} on the moduli stack of holomorphic principal $G_\bC$ bundles on a closed Riemann surface. The latter are a remarkable generalization of the Verlinde formula first conjectured by Teleman \cite{TelemanDefVerlinde}. Our theorem and that of Teleman and Woodward apply to different contexts and do not admit a direct comparison, but both have the formulae for K-theoretic intersection pairings on quotients for $k\gg 0$ as a common consequence.

We expect that several of the methods we develop are applicable to the index theory of other families of boundary value problems in gauge theory and differential geometry, and so may be of broader interest.

The contents of the article are as follows. In Section \ref{s:background} we recall the definition of a Hamiltonian loop group space $\M$, and describe the K-oriented non-compact submanifold $X \subset \M$ that plays a fundamental role throughout the article. We also establish some special properties of the Kirwan vector field on $X$. In Section \ref{s:quasi} we introduce the subalgebra of admissible K-theory classes, and prove that such classes have an equivariant analytic index. We also prove the non-abelian localization formula for the index in this section. In Section \ref{s:cohomology} we discuss desirable analytic properties of differential form representatives of the Chern character. We then prove the Kirillov-Berline-Vergne and fixed-point formulae for the index.

\bigskip

\noindent \textbf{Acknowledgements.} I thank Michele Vergne for discussions and for introducing me to interesting work in theoretical physics \cite{EquivVerlinde} on the equivariant Verlinde formula several years ago. I thank Eckhard Meinrenken, Nigel Higson, Reyer Sjamaar, Chris Woodward, Maxim Braverman, and Yanli Song for helpful discussions.

\bigskip

\noindent \textbf{Notation.} We often deal with vector spaces/bundles that are $\bZ_2$-graded and in this context $[a,b]$ denotes the graded commutator of linear operators $a,b$. Graded tensor products are denoted $\hotimes$.

Let $G$ be a compact connected and simply connected Lie group with Lie algebra $\g$. Fix a maximal torus $T \subset G$ with Lie algebra $\t$, and a choice of positive chamber $\t_+\subset \t$. Let $W$ be the Weyl group, and let $\mf{R} \supset \mf{R}_+$ be the set of roots, positive roots respectively. Let $n_+=|\mf{R}_+|$ be the number of positive roots. The integer or co-root lattice is denoted $\Pi=\ker(\exp\colon \t \rightarrow T)\simeq\Hom(U(1),T)$, and the real weight lattice by $\Lambda=\Hom(\Pi,\bZ)\simeq\Hom(T,U(1))$. The lattice $\Pi$ in $\t$ determines a canonical normalization of Lebesgue measure, that we use to identify distributions with generalized functions. Usually $u,g$ denote elements of $T$ (or $T_\bC$), $\xi$ an element of $\t$ (or $\t_\bC$), $\eta$ an element of $\Pi$, and $\lambda$ an element of $\Lambda$. 

The representation ring of $T$ is $R(T)=\bZ[\Lambda]$, and $\bC R(T)=\bC\otimes R(T)$. The formal completion $R^{-\infty}(T)=\bZ^\Lambda$ is an $R(T)$-module. If $T$ acts on a Hilbert space $H$, the isotypical subspace of weight $\lambda \in \Lambda$ is denoted $H_{[\lambda]}$. If $a$ is a $T$-equivariant linear operator on $H$, then the induced operator on $H_{[\lambda]}$ is denoted $a_{[\lambda]}$.

For a simple Lie algebra, every invariant integral inner product is a positive integer multiple of the basic inner product, normalized such that the length of the short co-roots is $\sqrt{2}$. For $\mf{su}(n)$ the basic inner product is
\[ (\xi_1,\xi_2)\mapsto \xi_1\cdot\xi_2=-\frac{1}{4\pi^2}\Tr_{\bC^n}(\xi_1\xi_2).\] 
The Lie group $G=G_1\times \cdots \times G_m$ is a product of its simple factors and $\g=\g_1\oplus \cdots \oplus \g_m$. Using the basic inner product on each summand $\g_i$, we obtain an inner product `$\cdot$' on $\g$ that we use throughout the article to identify $\g$ with $\g^*$. An $m$-tuple $\ell=(\ell_1,...,\ell_m) \in \bZ_{>0}^m$ determines a map $\ell \colon \g \rightarrow \g$ given by multiplication by $\ell_i$ on the summand $\g_i$, and $\ell^{-1}$ denotes the inverse map. For $\ell \in \bZ^m_{>0}$, define a finite subgroup $T_\ell:=\ell^{-1}\Lambda/\Pi \subset \t/\Pi=T$.

\section{Background on Hamiltonian loop group spaces}\label{s:background}
In this section we review the definition and needed properties of Hamiltonian loop group spaces $\M$. In particular we recall the construction of a finite dimensional smooth Spin$_c$ submanifold $X \subset \M$ that plays a fundamental role throughout the rest of the paper, analogous to the role played by the `extended moduli spaces' of Jeffrey-Kirwan \cite{JeffreyKirwanModuli}. In the last subsection we prove some special properties of the Kirwan vector field on $X$ that will play a role in analytic considerations of later sections.

\subsection{Loop groups.}\label{s:lg}
For more detailed background on loop groups see \cite{PressleySegal}. Let $LG$ denote the space of maps $S^1=\bR/\bZ \rightarrow G$ of fixed Sobolev class $\varsigma>\frac{1}{2}$. This is a Banach Lie group with group operation given by pointwise multiplication and Lie algebra $L\g$ identified with $\g$-valued $0$-forms $\Omega^0_\varsigma(S^1,\g)$ of Sobolev class $\varsigma$.

The $U(1)$ central extensions of the loop group are classified by invariant symmetric bilinear forms on $\g$ taking integer values on $\Pi$. For a simple Lie algebra such a form is multiple of the basic inner product. Thus central extensions of $LG$ are in one-one correspondence with $m$-tuples $\ell \in \bZ^m$. The corresponding central extension $\wh{LG}^{(\ell)}$ with Lie algebra cocycle
\[ (\xi_1,\xi_2)\mapsto 2\pi \i \int_{S^1} \ell\d\xi_1\cdot \xi_2,\]
is called the \emph{level} $\ell$ central extension of $LG$. The group $G$ sits inside $LG$ as the subgroup of constant loops. The central extension $\wh{LG}^{(\ell)}$ trivializes over $G$, hence $G$ can be regarded as a subgroup of $\wh{LG}^{(\ell)}$. Another subgroup of $LG$ that plays a key role for us is the integral lattice $\Pi=\ker(\exp \colon \t \rightarrow T)$ where $\eta \in \Pi$ is identified with the 1-parameter subgroup $s \in S^1\mapsto \exp(-s\eta) \in T$. The restriction of the central extension $\wh{LG}^{(\ell)}$ to the subgroup $T\times \Pi$  is a semi-direct product $T\ltimes \wh{\Pi}^{(\ell)}$, where elements $u \in T$, $\wh{\eta}\in \wh{\Pi}^{(\ell)}$ obey the commutation relation
\begin{equation}
\label{e:commutation}
u\wh{\eta}u^{-1}\wh{\eta}^{-1}=u^{\ell\eta}=e^{2\pi \i \ell\xi\cdot \eta}, \qquad u=\exp(\xi).
\end{equation}

Let $L\g^*=\Omega^1_{\varsigma-1}(S^1,\g)$ denote the space of $\g$-valued $1$-forms (or connections) of Sobolev class $\varsigma-1$. Since $\varsigma-1>-\varsigma$, there is a non-degenerate pairing $L\g \times L\g^*\rightarrow \bR$ given in terms of the inner product $\cdot$ on $\g$ followed by integration over $S^1$. The loop group $LG$ acts smoothly and properly on $L\g^*$ by gauge transformations:
\begin{equation} 
\label{e:gauge}
a\mapsto g\cdot a=\Ad_g(a)-\d gg^{-1}, \qquad g \in LG, a \in L\g^*.
\end{equation}
The gauge action \eqref{e:gauge} coincides with the restriction of the coadjoint action of $\wh{LG}=\wh{LG}^{(1,1,...,1)}$ to the affine hyperplane $L\g^*\times \{1\}\subset \wh{L\g}^*$. Given $\xi \in L\g$ the corresponding vector field for the infinitesimal action on $L\g^*$ is $\xi_{L\g^*}(a)=\partial_u|_{u=0} \exp(-u\xi)\cdot a=\d_a \xi$ at $a \in L\g^*$, where $\d_a=\d+\ad_a$ is the covariant derivative defined by the connection $a$. The subgroup $\Omega G \subset LG$ consisting of loops beginning at $1 \in G$ acts on $L\g^*$ freely with quotient $L\g^*/\Omega G \simeq G$, the isomorphism being induced by the map
\[ \hol \colon L\g^* \rightarrow G \]
taking a connection to its holonomy.

\subsection{Hamiltonian loop group spaces.}\label{s:hlgsp}
\begin{definition}[cf. \cite{MWVerlindeFactorization}]
A \emph{proper Hamiltonian} $LG$-\emph{space} $(\M,\omega_\M,\mu_\M)$ is a Banach manifold $\M$ equipped with a smooth action of $LG$, an $LG$-invariant weakly symplectic form $\omega_\M$ and an $LG$-equivariant proper moment map:
\[ \mu_\M \colon \M \rightarrow L\g^*, \qquad \iota(\xi_\M)\omega_\M=-\d \pair{\mu_\M}{\xi}, \quad \xi \in L\g.\]
(We will often drop the adjective `proper' for brevity.)
\end{definition} 

Since $\Omega G$ acts freely and properly on $L\g^*$ and $\mu_\M$ is proper and equivariant, $\Omega G$ acts freely and properly on $\M$ with quotient $M=\M/\Omega G$ a smooth compact $G$-manifold. Let $\hol_\M \colon \M \rightarrow M$ be the quotient map. The moment map $\mu_\M$ descends to an equivariant `group-valued moment map' $\mu_M\colon M \rightarrow G$ such that $\mu_M\circ \hol_\M=\hol\circ \mu_\M$; see \cite{AlekseevMalkinMeinrenken} for much more from this perspective. Under our assumptions on $G$, it is known that $M$ is automatically even-dimensional.

\begin{remark}
In this article we will not use the symplectic form $\omega_\M$ directly, but only a Spin$_c$ structure derived from it (see Section \ref{s:spinor}). Thus for example the non-degeneracy (or even existence) of $\omega_\M$ could be relaxed, assuming instead the existence of a suitable Spin$_c$ structure.
\end{remark}

\begin{example}\label{ex:modspace}
Let $\Sigma$ be a compact connected oriented surface with $b\ge 1$ parametrized boundary components. Let $\sul{G}=G^b$. Let
\[ \A_\Sigma=\Omega^1_{\varsigma-\frac{1}{2}}(\Sigma,\g) \supset \A_\Sigma^\flat \]
be the space of connections, resp. the space of flat connections on the principal $G$-bundle $\Sigma \times G$, of Sobolev class $\varsigma-\frac{1}{2}$. Let $\G_\Sigma$ be the gauge group, that is, the set of maps $\Sigma \rightarrow G$ of Sobolev class $\varsigma+\frac{1}{2}$. With this choice of Sobolev class there is a well-defined continuous group homomorphism $\G_\Sigma\rightarrow L\sul{G}$ given by restriction (or trace) to the boundary. Since $G$ is simply connected, $\scr{R}$ is surjective, hence we obtain a short exact sequence
\[ 1 \rightarrow \G_{\Sigma,\partial \Sigma}\rightarrow \G_\Sigma \rightarrow L\sul{G} \rightarrow 1.\]
The affine space $\A_\Sigma$ carries the well-known Atiyah-Bott symplectic structure \cite{AtiyahBottYangMills}, and the action of $\G_{\Sigma,\partial \Sigma}$ is Hamiltonian for the moment map $A \mapsto \tn{curv}(A)$. The symplectic quotient
\[ \M_\Sigma=\A_\Sigma^\flat/\G_{\Sigma,\partial \Sigma} \]
is a proper Hamiltonian $L\sul{G}$-space, where the moment map is given by the pullback of the connection to the boundary. For further details see \cite{MWVerlindeFactorization, meinrenken1999cobordism}.
\end{example}

\subsection{Levels and spinor bundles.}\label{s:spinor}
\begin{definition}
Let $\M$ be a Hamiltonian $LG$-space. A vector bundle $E \rightarrow \M$ is said to be at \emph{level} $\ell \in \bZ^m$ if it carries an action of $\wh{LG}^{(\ell)}$ compatible with the action of $LG$ on $\M$.
\end{definition}

As mentioned in the introduction, Spin$_c$ structures will be important in this article, because they play a role in K-theory analogous to that played by orientations in cohomology. Spin$_c$ structures on an even rank Euclidean vector bundle $V$ are in one-one correspondence with \emph{spinor bundles}: $\bZ_2$-graded Hermitian vector bundles $\scr{S}=\scr{S}^+\oplus \scr{S}^-$ together with an isomorphism of bundles of $\bZ_2$-graded $^*$-algebras
\begin{equation} 
\label{e:cliffstructmap}
\c \colon \bC l(V)\rightarrow \End(\scr{S}),
\end{equation}
where $\bC l(V)$ is the bundle of complexified Clifford algebras of $V$ (with fibres isomorphic to complex square matrices of size $2^{\tn{rank}(V)/2}$), cf. \cite{LawsonMichelsohn}.

Fix a $G$-invariant Riemannian metric on the compact manifold $M=\M/\Omega G$. The pullback $\hol_\M^*TM$ is a finite rank $LG$-equivariant Euclidean vector bundle over $\M$, hence we may define the Clifford algebra bundle $\bC l(\hol_\M^*TM)$.
\begin{definition}
\label{d:spinormodule}
A \emph{level} $\ell$ \emph{spinor bundle} for $\hol_\M^*TM$ is a spinor bundle $\scr{S}$ for $\hol_\M^*TM$ such that the underlying complex vector bundle is at level $\ell$ and the structure map \eqref{e:cliffstructmap} is $LG$-equivariant.
\end{definition}
\begin{remark}
Note that if $\scr{S}$ is at level $\ell$ then $\scr{S}^*$ is at level $-\ell$ and $\End(\scr{S})$ is at level $\ell-\ell=0$, hence carries an $LG$-action.
\end{remark}
 
Let $\hvee=(\hvee_1,...,\hvee_m) \in \bZ^m_{>0}$ denote the $m$-tuple of dual Coxeter numbers of the simple Lie algebras $\g_i$ in the direct sum decomposition $\g=\g_1\oplus \cdots \oplus \g_m$.
\begin{theorem}[\cite{LMSspinor}]
\label{t:spinor}
Let $\M$ be a proper Hamiltonian $LG$-space with quotient $M=\M/\Omega G$. There is a canonical level $\hvee$ spinor bundle $\scr{S}_{\tn{can}}$ for $\hol_\M^*TM$.
\end{theorem}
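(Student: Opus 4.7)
The plan is to construct $\scr{S}_\tn{can}$ by starting with a $G$-equivariant spinor bundle on the finite-dimensional quotient $M=\M/\Omega G$, pulling back to $\M$, and then verifying that the action of $LG$ lifts projectively with the projective cocycle matching the level $\hvee$ central extension. The appearance of the dual Coxeter number $\hvee$ is the key feature and must come from the anomaly in representing $L\g$ on spinors of itself.

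First I would pick a $G$-invariant, compatible almost complex structure $J$ on the compact quasi-Hamiltonian $G$-manifold $M$, which exists because the space of such $J$ is contractible (and in fact is determined up to homotopy by $\omega_M$). This yields a $G$-equivariant Spin$_c$ structure on $M$ with spinor bundle $\scr{S}_M=\wedge^\bullet(T^{0,1}M)$, and hence a bundle $\hol_\M^*\scr{S}_M$ over $\M$ that is a spinor bundle for $\hol_\M^*TM$. Because pullback along $\hol_\M$ intertwines the constant-loop $G$-action on $\M$ with the $G$-action on $M$, the bundle $\hol_\M^*\scr{S}_M$ inherits a natural $G$-action and a trivial $\Omega G$-action consistent with the structure map $\c$.

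The central step is to upgrade to an $\wh{LG}^{(\hvee)}$-action. For $g \in LG$, pushforward of tangent vectors on $\M$ together with the identification $\hol_\M^*TM\simeq T\M/(\Omega\g)_\M$ induces an action on $\hol_\M^*TM$, and the naive lift to the spinor bundle requires, at each point of $\M$, choosing a splitting of the sequence $0\to (\Omega\g)_\M\to T\M\to \hol_\M^*TM\to 0$. Different choices differ by elements of $\Cl(\Omega\g)$, producing a cocycle valued in the projective spinor representation of $L\g$. The fundamental identity to invoke is that the projective action of $LG$ on the spin representation of its adjoint representation, relative to the polarization by Fourier modes of $\Omega\g$, is at level $\hvee$: this is the classical Kac-Peterson computation, obtained by comparing the quadratic Casimir on $\g$ (the source of the shift $\ell \mapsto \ell + \hvee$) with the normal ordering anomaly in the Clifford algebra of $L\g$. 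Equivalently, it can be phrased as the statement that the $\bZ_2$-graded line bundle $\det(T^{0,1}\M)\to \M$ is canonically at level $\hvee$, forcing the same level on any spinor bundle built from a compatible polarization.

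The main obstacle will be organizing the construction so that it is genuinely canonical: the choice of $J$ and the choice of splitting of $T\M\to \hol_\M^*TM$ should affect $\scr{S}_\tn{can}$ only up to a canonical isomorphism of $\wh{LG}^{(\hvee)}$-equivariant spinor bundles. This is handled by the standard argument that the space of compatible almost complex structures is contractible, combined with the fact that two spinor bundles on a given Euclidean bundle differ by a Hermitian line bundle, whose equivariance level is rigidified by the requirement that the structure map $\c$ be $LG$-equivariant. The second obstacle is the precise identification of the resulting central extension with $\wh{LG}^{(\hvee)}$, which proceeds by restricting to the abelian subgroup $T\ltimes\wh{\Pi}^{(\hvee)}$ and checking the commutation relation \eqref{e:commutation} directly on weight vectors of the spinor bundle over a fixed point of $T$ in $\M$.
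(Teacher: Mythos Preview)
Your approach has a genuine gap at the very first step, and the subsequent cocycle argument does not rescue it.

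The quotient $M=\M/\Omega G$ is a quasi-Hamiltonian $G$-space, and its $2$-form $\omega_M$ is \emph{not} symplectic in general: it satisfies only the minimal degeneracy condition $\ker(\omega_M)_m=\{\xi_M(m)\mid \Ad_{\mu_M(m)}\xi=-\xi\}$. Hence there is no ``compatible almost complex structure $J$'' on $M$ in the usual sense, and you cannot produce $\scr{S}_M=\wedge T^{0,1}M$ directly. Indeed, one of the points of the cited reference is precisely to \emph{construct} a canonical Spin$_c$ structure on $M$; you cannot assume one at the outset.

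Even setting that aside, suppose $M$ carried some $G$-equivariant spinor bundle $\scr{S}_M$. Since $\Omega G\trianglelefteq LG$ with $LG/\Omega G\simeq G$, the pullback $\hol_\M^*\scr{S}_M$ is automatically $LG$-equivariant through the quotient homomorphism $LG\to G$, hence at level $0$, not level $\hvee$. No projective cocycle arises: the action is already honest, and your ``choice of splitting of $0\to(\Omega\g)_\M\to T\M\to\hol_\M^*TM\to 0$'' plays no role in defining the pulled-back bundle. The sentences about $\det(T^{0,1}\M)$ and the Kac--Peterson anomaly belong to a different construction than the one you set up.

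The paper's route goes in the opposite direction. It uses the (honest, weakly) symplectic structure on the \emph{infinite-dimensional} space $\M$ to get a spinor module for $T\M$, and the polarization of $\Omega\g\simeq L\g/\g$ by positive Fourier modes to get a spinor module for the vertical bundle $\ker(T\hol_\M)$. A $2$-out-of-$3$ argument for the short exact sequence $0\to\Omega\g\to T\M\to\hol_\M^*TM\to 0$ then yields the spinor module for $\hol_\M^*TM$. The level $\hvee$ enters because the spin representation of $LG$ on spinors of $\Omega\g$ is at level $\hvee$; this is where the Kac--Peterson computation is actually invoked. The delicate point is that $2$-out-of-$3$ is not automatic in infinite dimensions (irreducible Clifford modules are not unique), and the cited reference checks it holds here.
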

The heuristic idea behind the construction in \cite{LMSspinor} is as follows. In finite dimensions, spinor modules satisfy $2$-out-of-$3$: given a short exact sequence of even rank Euclidean vector bundles $0\rightarrow V_1 \rightarrow V_2 \rightarrow V_3 \rightarrow 0$ and given spinor modules for any two of the $V_i$, one obtains a spinor module for the third. The manifold $\M$ is symplectic, hence has a spinor module coming from a choice of compatible almost complex structure. The kernel of the quotient map $T\hol_\M\colon T\M \rightarrow \hol_\M^*TM$ is trivial with fibres isomorphic to $\Omega \g\simeq L\g/\g$. The latter has a canonical complex structure with $+\i$-eigenspace given by the positive Fourier modes, and hence a canonical spinor module as well. Thus assuming the $2$-out-of-$3$ property still holds in this infinite dimensional context, one obtains a spinor module for $\hol_\M^*TM$. The $2$-out-of-$3$ property should not be expected to hold in a general infinite-dimensional context because of the non-uniqueness of irreducible representations of Clifford algebras in infinite dimensions; however it can be shown to apply in this case \cite{LMSspinor}. The dual Coxeter number appears because this is the level of the spin representation of the loop group.

Further important examples of spinor bundles come from tensoring with line bundles: given a level $k$ line bundle $L$ on $\M$, one obtains a new spinor module $\scr{S}=\scr{S}_{\tn{can}}\otimes L$ at level $\ell=k+\hvee$. Line bundles appear naturally in the context of geometric quantization.

\begin{remark} 
For $\M=\M_\Sigma$ the moduli space of flat connections of Example \ref{ex:modspace}, prequantum line bundles are discussed in \cite[Section 3.3]{MWVerlindeFactorization}, following \cite{Mickelsson,RamadasSingerWeitsman,Witten}: $\M_\Sigma$ has a canonical $\wh{L\sul{G}}$-equivariant prequantum line bundle, obtained by reduction of a prequantum line bundle for the Atiyah-Bott symplectic structure on $\A_\Sigma$.
\end{remark}

From now on we assume that the level $\ell$ of $\scr{S}$ satisfies
\[ \text{ \fbox{ $\ell_1 > 0,...,\ell_m>0$, } } \]
to be abbreviated $\ell>0$ below. In particular for $\scr{S}_{\tn{can}}\otimes L$ we require that $k>-\hvee$.

\begin{definition}
\label{d:spincmoment}
Let $\scr{S}\rightarrow \M$ be a level $\ell$ spinor bundle for $\hol_\M^*TM$. The \emph{anti-canonical line bundle} of $\scr{S}$ is the level $2\ell$ line bundle
\[ \scr{L}=\Hom_{\bC l(\hol_\M^*TM)}(\scr{S}^*,\scr{S}).\]
Choose an invariant Hermitian connection $\nabla^{\scr{L}}$ on $\scr{L}$. Let 
\[ \varpi=\frac{1}{2}c_1(\scr{L},\nabla^{\scr{L}}) \] 
be half the first Chern form associated to the connection. The \emph{Spin}$_c$ \emph{moment map} is the map $\phi_\M\colon \M \rightarrow L\g^*$ defined by
\[ 2\pi \i\pair{\phi_\M}{\xi}=\frac{1}{2}(\L^{\scr{L}}_\xi-\nabla^{\scr{L}}_{\xi_\M}), \qquad \xi \in L\g.\]
\end{definition}
The Spin$_c$ moment map $\phi_\M$ is $LG$-equivariant with respect to the level $\ell$ gauge action 
\begin{equation}
\label{e:levelellgauge}
a\mapsto \Ad_g(a)-\ell \d g g^{-1},
\end{equation} 
on $L\g^*$.

\subsection{Global transversals.}\label{s:transversal}
The Lie algebra $\t$ can be viewed as the subspace of $L\g^*=\Omega^1(S^1,\g)$ consisting of constant $\t$-valued 1-forms. Under the gauge action it is preserved by the subgroup $N(T)\ltimes \Pi \subset LG$ where $N(T)$ is the normalizer of $T$ in $G$. Since $T \subset N(T)$ acts trivially on $\t$, there is an induced action of $(N(T)\ltimes \Pi)/T=W\ltimes \Pi$, which one checks is the standard action of the affine Weyl group $W_{\tn{aff}}=W\ltimes \Pi$ on $\t$.

Recall from Section \ref{s:lg} that the based loop group $\Omega G$ acts freely on $L\g^*$. The subspace $\t$ is not transverse to the orbits of the $\Omega G$ action. However it is possible to `thicken' $\t$ to obtain a finite dimensional submanifold of $L\g^*$ with this property. Let $\t^\perp$ be the orthogonal complement to $\t$ inside $\g$. Let $N(T)\ltimes \Pi$ act on $\t$ as above, and on $\t^\perp$ via the adjoint action of $N(T)$ (with $\Pi$ acting trivially). Let $\ol{B}_\epsilon(\t^\perp)$ denote the closed ball in $\t^\perp$ of radius $\epsilon$.
\begin{proposition}[\cite{LMSspinor}, Section 6.4]
\label{p:trans}
For $\epsilon>0$ sufficiently small, the inclusion $\t \hookrightarrow L\g^*$ extends to an $N(T)\ltimes \Pi$-equivariant embedding
\[ \t \times \ol{B}_\epsilon(\t^\perp) \rightarrow L\g^* \]
with image $R \subset L\g^*$ transverse to the $\Omega G$ orbits and contained in the dense subspace of smooth loops. Each $\Omega G$ orbit intersects $R$ non-trivially in an orbit of $\Pi$.
\end{proposition}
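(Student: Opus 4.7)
The plan is to write down an explicit candidate map $\Phi\colon \t \times \ol{B}_\epsilon(\t^\perp)\to L\g^*$, using a smooth bump on $S^1$ to spread the $\t^\perp$-directions over the loop, and then verify equivariance, injectivity, and transversality in turn. Concretely, pick a smooth $\chi\colon S^1\to\bR$ with support in a small arc away from the basepoint and $\int_{S^1}\chi=1$, and set
\[
\Phi(\xi,v)(s)\,\d s \;=\; \xi\,\d s \;+\; \chi(s)\,\Ad_{\exp(-s\xi)}(v)\,\d s.
\]
Since $\chi$ and the twist are smooth, $\Phi$ takes values in smooth loops, giving the last clause of the proposition for free. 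The $\t$-valued ``zero mode'' of $\Phi(\xi,v)$ is $\xi$ (because $\Ad_{\exp(-s\xi)}$ preserves $\t^\perp$), and then $v$ is recovered from the nonzero modes via $\chi$, so $\Phi$ is globally injective. For $\eta\in\Pi$ the gauge action adds $\eta\,\d s$ and conjugates by $\exp(-s\eta)$; since $\xi,\eta$ commute one gets $\Ad_{\exp(-s\eta)}\Ad_{\exp(-s\xi)} = \Ad_{\exp(-s(\xi+\eta))}$, so $\eta\cdot\Phi(\xi,v)=\Phi(\xi+\eta,v)$. Equivariance for $N(T)$ is immediate from the $\Ad$-invariance built into the definition.

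The core step is transversality at the ``central'' point $(\xi,0)$, where
\[
\d\Phi_{(\xi,0)}(\dot\xi,\dot v) \;=\; \bigl(\dot\xi + \chi(s)\,\Ad_{\exp(-s\xi)}(\dot v)\bigr)\,\d s.
\]
Given $a\in L\g^*$, I would split $a=a^\t+a^{\t^\perp}$ pointwise, set $\dot\xi=\int_0^1 a^\t\,\d s\in\t$ and $\dot v=\int_0^1 \Ad_{\exp(s\xi)}(a^{\t^\perp}(s))\,\d s\in\t^\perp$, and then solve explicitly for $\zeta\in\Omega\g$ whose image under $\d_\xi\zeta=\d\zeta+[\xi,\zeta]$ absorbs the remainder. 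The $\t$-component is handled directly by integrating $a^\t-\dot\xi$ from $0$, while for the $\t^\perp$-component one first passes to the twisted variable $\tilde\zeta^{\t^\perp}:=\Ad_{\exp(s\xi)}\zeta^{\t^\perp}$, turning $\d_\xi$ into ordinary differentiation; the equation $\dot{\tilde\zeta}^{\t^\perp}=\Ad_{\exp(s\xi)}a^{\t^\perp}-\chi(s)\dot v$ has a solution vanishing at $0$ and $1$ exactly because of the normalization $\int\chi=1$ and the choice of $\dot v$. This shows $T_\xi R \oplus T_\xi(\Omega G\cdot\xi)$ surjects onto $T_\xi L\g^*$, and since $\dim R$ matches the codimension of an $\Omega G$-orbit, the sum is direct.

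The transversality is an open condition on $(\xi,v)$, so it extends from $v=0$ to a neighborhood. To get a \emph{uniform} $\epsilon>0$ over the non-compact parameter space $\t$, I would invoke equivariance under $W_\aff=W\ltimes\Pi$: the required estimates need only be checked on the closed fundamental alcove (times $\ol{B}_\epsilon(\t^\perp)$), which is compact. This is where the main technical care is needed, but it is essentially a compactness argument given the explicit formula. Finally, for the clause about $\Omega G$-orbits meeting $R$ in $\Pi$-orbits, I would factor through the holonomy: $\hol\circ\Phi$ descends, by $\Pi$-equivariance, to a map $T\times\ol{B}_\epsilon(\t^\perp)\to G$ that at $v=0$ is the inclusion $T\hookrightarrow G$ and whose differential there is an isomorphism (indeed $(\dot u,\dot v)\mapsto \dot u + \dot v$ in $\g=\t\oplus\t^\perp$ after left-translation); compactness of $T$ and a further shrinking of $\epsilon$ make this map injective. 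Hence two points of $R$ in the same $\Omega G$-orbit must already differ by $\Pi$, completing the proof.

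The main obstacle will be the uniform transversality and uniform injectivity across all of $\t$; both are handled by reducing to the compact fundamental alcove via $W_\aff$-equivariance, but the explicit formula for $\Phi$ with the $\Ad_{\exp(-s\xi)}$ twist is essential because without it one cannot simultaneously achieve $\Pi$-equivariance and a clean transversality computation.
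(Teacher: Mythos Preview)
Your argument is correct and is a genuinely different, more constructive route than the paper's. The paper does not write down an explicit embedding; instead it argues abstractly that it suffices to find an $N(T)\ltimes\Pi$-equivariant complement to the $\Omega G$-orbit directions inside the normal bundle $\nu(L\g^*,\t)=\t\times(L\g^*/\t)$, observes that such splittings exist pointwise (lift $\nu_{\exp(a)}(G,T)\simeq\t^\perp$ through the normal derivative of $\hol$), and then uses convexity of the space of splittings together with properness of the $N(T)\ltimes\Pi$-action on $\t$ to produce an equivariant choice. A tubular neighborhood of $\t$ determined by this splitting gives $R$. In the $\mf{su}(2)$ example the paper computes explicit splittings at half-lattice points (subspaces of the form $\{\Ad_{\exp(-sc)}\xi_0\}$, essentially your twist without the bump) and interpolates between them by a partition of unity.

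Your explicit formula $\Phi(\xi,v)=(\xi+\chi(s)\Ad_{\exp(-s\xi)}v)\,\d s$ packages the same idea globally, with the bump $\chi$ (supported away from the basepoint, which is essential for $\Phi(\xi,v)$ to be a smooth loop) replacing the paper's interpolation, and with the twist $\Ad_{\exp(-s\xi)}$ built in to achieve exact $\Pi$-equivariance rather than obtaining it by averaging. What your approach buys is a concrete formula and a direct transversality computation at every $(\xi,0)$; what the paper's approach buys is that the construction is manifestly canonical up to homotopy (convex combinations of splittings are splittings) and avoids the functional-analytic care needed to justify that transversality is open in this Banach setting. Your reduction to the fundamental alcove via $W_{\aff}$-equivariance for the uniform $\epsilon$ plays the same role as the paper's properness argument. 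One small point: your final paragraph establishes that two points of $R$ in the same $\Omega G$-orbit differ by $\Pi$, which is the substantive content; the phrase ``non-trivially'' in the statement should not be read as asserting $\hol(R)=G$ (indeed $X/\Pi$ is only a submanifold with boundary of $M$, not all of $M$), so you have covered what is needed.
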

Sketching the proof, first note that via a tubular neighborhood embedding, it is enough to find an $N(T)\ltimes \Pi$-invariant complementary subbundle to the $\Omega G$-orbit directions of the normal bundle $\nu(L\g^*,\t)=\t \times (L\g^*/\t)$ (hereafter referred to as a `splitting'). Under the quotient map $\hol \colon L\g^* \rightarrow G$, the subspace $\t$ maps to the maximal torus $T=\t/\Pi$. Since the normal bundle $\nu(G,T)\simeq T \times \t^\perp$, at any point $a \in \t$ we can find a subspace of $\nu_a(L\g^*,\t)$ mapped by the normal derivative $\nu_a(\hol)\colon \nu_a(L\g^*,\t)\rightarrow \nu_{\exp(a)}(G,T)$ isomorphically to $\nu_{\exp(a)}(G,T)\simeq \t^\perp$. These splittings can be chosen to depend smoothly on $a \in \t$ and, since $N(T)\ltimes \Pi$ acts properly on $\t$, they can be chosen $N(T)\ltimes \Pi$-equivariantly. One can arrange that $R$ is contained in the space of smooth loops as well. The result is canonical up to homotopy, the only choices involved being a splitting (preserved under convex combinations) and a `thickness' $\epsilon$.

\begin{example}
We make the splitting of $\nu(L\g^*,\t)$ more explicit in the case $\g=\mf{su}(2)$. In this case $\t \subset L\g^*$ is $1$-dimensional. The tangent space to the $LG$ orbit through the point $a \in \t \subset L\g^*$ is the range of the covariant derivative $\d_a \colon L\g \rightarrow L\g^*$, and is closed of finite codimension since $\d_a$ is Fredholm. The orthogonal complement of the range is the kernel of the $L^2$ adjoint $\d_a^*=-\d_a$. This subspace is
\begin{equation} 
\label{e:kerda}
\ker(\d_a^*)=\{\xi(s)=\Ad_{\exp(-sa)}(\xi_0)|\xi_0\in \g_{\exp(a)}\} \supset \t
\end{equation}
where $\g_{\exp(a)}$ is the Lie algebra of the centralizer $G_{\exp(a)}$ in $G$ of $\exp(a)\in T$. Let $\Gamma=\exp^{-1}(\{1,-1\})=\frac{1}{2}\Pi$ be the lattice of elements in $\t$ that exponentiate to an element of the center of $G$. When $a=c \in \Gamma$ then $\ker(\d_c^*)\simeq \g$ and $\{c\}\times \ker(\d_c^*)/\t \subset \nu_c(L\g^*,\t)=\{c\}\times L\g^*/\t$ provides the desired splitting at the point $c \in \t$. 

When $\exp(a)$ is not central then $\ker(\d_a^*)=\t$ is too small. We claim that for all $c \in \Gamma$, $\{a\}\times \ker(\d_c^*)$ is a complement to the tangent space to the $\Omega G$ orbit through $a$. Since the codimension of the $\Omega G$ orbit through $a$ is $\dim(\g)$, it suffices to check that $\ker(\d_c^*) \cap \d_a(\Omega \g)=0$. Thus we check for solutions to the equation
\begin{equation} 
\label{e:transversezns}
\d_a\zeta=\xi, \qquad \zeta(0)=0, \quad \xi(s)=\Ad_{\exp(-sc)}(\xi_0),\quad \xi_0 \in \g_{\exp(c)}=\g.
\end{equation}
Applying $\Ad_{\exp(sc)}$ to both sides and setting $\tilde{\zeta}(s)=\Ad_{\exp(sc)}(\zeta)$, $\tilde{a}=\Ad_{\exp(sc)}(a)-c=a-c\ne 0$ yields
\[ \d_{\tilde{a}}\tilde{\zeta}=\xi_0 \quad \Rightarrow \quad \d_{\tilde{a}}(\tilde{\zeta}-\ad_{\tilde{a}}^{-1}\xi_0)=0. \]
Since $\tilde{a} \notin \Gamma$, \eqref{e:kerda} shows $\tilde{\zeta}-\ad_{\tilde{a}}^{-1}\xi_0 \in \t$, and in particular is constant. Since $\tilde{\zeta}(0)=0$, we obtain $\tilde{\zeta}=0$. Thus the only solution to \eqref{e:transversezns} is $\zeta=0$, verifying the claim.

Using the claim, we obtain the desired splitting of $\nu(L\g^*,\t)$ by using the subspace $\{c\}\times \ker(\d_c^*)/\t$ at lattice points $c \in \Gamma \subset \t$, and then simply interpolating between these splittings for $a \notin \Gamma$ (recall that convex combinations of splittings are again splittings). Interpolation can be done $N(T)\ltimes \Pi$-equivariantly by choosing a bump function $\chi \in C^\infty_c(\t)$ such that (i) $\supp(\chi)\cap \Gamma=\{0\}$, and (ii) the collection of translates of $\chi$ under elements of $\Gamma$ is a partition of unity.
\end{example}

\begin{definition}[\cite{LMSspinor}]
\label{d:globtrans}
Let $(\M,\omega_\M,\mu_\M)$ be a proper Hamiltonian $LG$-space, and let $R\subset L\g^*$ be as constructed in Proposition \ref{p:trans}. As $\mu_\M$ is $LG$-equivariant, the inverse image
\[ X=\mu_\M^{-1}(R) \]
is a finite dimensional $N(T)\ltimes \Pi$-invariant submanifold (with boundary) of $\M$ that we will refer to as a \emph{global transversal} of $\M$. The interior of $X$ (resp. $R$) is denoted $X^\circ$ (resp. $R^\circ$). The moment map $\mu_\M$ restricts to a map $\mu_X \colon X \rightarrow R$.
\end{definition}
\begin{remark}
\label{r:virtualfund}
In case the moment map $\mu_\M$ happens to be transverse to $\t \subset L\g^*$, one can work with the simpler choice $\mu_\M^{-1}(\t)$ instead of $\mu_\M^{-1}(R)$. We will in any case shortly introduce an operator on $X^\circ=\mu_\M^{-1}(R^\circ)$ that plays the role of a K-homological `virtual fundamental class' for $\mu_\M^{-1}(\t)$.
\end{remark}
We introduce some further notation and properties related to $X$:
\medskip

\noindent \emph{Riemannian metrics}. Under the map $\hol_\M$, the quotient $X/\Pi$ identifies with a compact $N(T)$-invariant submanifold with boundary of the compact Riemannian manifold $M=\M/\Omega G$, with interior $X^\circ/\Pi$ an open subset of $M$. Let $g_{X/\Pi}$ be the restriction of the $G$-invariant metric on $M$ to $X/\Pi$. Choose an $N(T)$-invariant boundary defining function $r$ on $X/\Pi$. Modifying the metric on $M$ if necessary, we may assume $g_{X/\Pi}$ takes a product form $\d r^2+g_{\partial X/\Pi}$ in a collar neighborhood of the boundary. Its pullback to $X$ is an $N(T)\ltimes \Pi$-invariant Riemannian metric $g_X$. We also introduce a b-metric (in the sense of Melrose \cite{melrose1993atiyah}) on $X/\Pi$, equal to $g_{X/\Pi}$ outside the collar, and given by $\d r^2/r^2+g_{\partial X/\Pi}$ on the collar. The pullback of the b-metric to $X$ will be denoted $g$. The restriction of $g$ to the interior $X^\circ$ is an ordinary Riemannian metric, which is complete with a cylindrical end. We identify $TX^\circ\simeq T^*X^\circ$ using $g$.

\medskip
\noindent \emph{Proper maps}. Let $(\mu,\nu_0)$ be the two components of the composition $\mu_X \colon X \rightarrow R \simeq \t \times \ol{B}_\epsilon(\t^\perp)$. Then $\mu$ is proper as the composition of the proper maps $\mu_X=\mu_\M|_X$ and $R \rightarrow \t$. Moreover $\mu|_{X^\circ}$ is the moment map for the $T$-action on $X^\circ$ (recalling that we identify $\t \simeq \t^*$ using the inner product `$\cdot$'). Similarly the restricted map $(\mu,\nu_0) \colon X^\circ \rightarrow R^\circ\simeq \t \times B_\epsilon(\t^\perp)$ is proper. Composing $\nu_0$ with an $N(T)$-equivariant diffeomorphism $B_\epsilon(\t^\perp)\simeq \t^\perp$, we obtain a proper $N(T)\ltimes \Pi$-equivariant map
\begin{equation} 
\label{e:muperp}
(\mu,\nu)\colon X^\circ \rightarrow \t \times \t^\perp.
\end{equation}
It is convenient to choose the diffeomorphism $B_\epsilon(\t^\perp)\simeq \t^\perp$ such that
\begin{equation}
\label{e:muperpdecay}
(1+|\nu(x)|^2)^{-1}|\d\nu(x)| \rightarrow 0 \quad \text{ as } \quad x \rightarrow \partial X,
\end{equation} 
and we assume this below (see for example \cite[Section 4.7]{LSQuantLG} or \cite[Section 3.3]{LSWittenDef} for further discussion).

\medskip
\noindent \emph{Spin-c structure}. Let $\scr{S}$ be a level $\ell>0$ spinor bundle for $\hol_\M^*TM$ (Definition \ref{d:spinormodule}). As $X^\circ/\Pi$ identifies with an open subset of $M$, the pullback of $\scr{S}$ to $X^\circ$ is a spinor module for the tangent bundle $(TX^\circ,g_X|_{X^\circ})$ that we still denote $\scr{S}$. Recall that we also defined a complete Riemannian metric $g$ on $X^\circ$. Given a spinor bundle $\c_1 \colon \bC l(TY,g_1)\rightarrow \End(\scr{S})$ on a Riemannian manifold $(Y,g_1)$, and given a new Riemannian metric $g_2$ on $Y$, one obtains a new Clifford action $\c_2=\c_1\circ (g_1^{-1}g_2)^{1/2}\colon \bC l(TY,g_2)\rightarrow \End(\scr{S})$. Thus $\scr{S}$ becomes becomes a spinor module for $\bC l(TX^\circ,g)$. The pullback of the anticanonical line bundle $\scr{L}$, $\nabla^{\scr{L}}$, and $\varpi$ to $X$ will be denoted with the same symbols. Define $\phi \colon X \rightarrow \t$ to be $\phi_\M|_X$ composed with projection to $\t$. By \eqref{e:levelellgauge}, $\phi$ is $N(T)\ltimes \Pi$-equivariant for the level $\ell$ action of $\Pi$ on $\t$:
\begin{equation}
\label{e:Piequivphi}
(\eta^*\phi)(x)=\phi(\eta \cdot x)=\phi(x)+\ell \eta
\end{equation} 
for all $\eta \in \Pi$. The assumption $\ell>0$ implies that $\phi \colon X \rightarrow \t$ is a proper map.

\subsection{Dirac operators on the transversal}\label{s:DiracTrans}
Let $X$, $\scr{S}$ be as in the previous subsection. Choose an $N(T)\ltimes \wh{\Pi}^{(\ell)}$-invariant Clifford connection $\nabla$ on $\scr{S}$, and let $\Dirac_{\scr{S}}$ be the corresponding Dirac operator, defined as the composition
\[ C^\infty(X^\circ,\scr{S})\xrightarrow{\nabla}C^\infty(X^\circ,T^*X^\circ\otimes \scr{S})\xrightarrow{\c}C^\infty(X^\circ,\scr{S}).\]
Completeness of the Riemannian metric $g$ on $X^\circ$ guarantees that the operator $\Dirac_{\scr{S}}$ with initial domain $C^\infty_c(X^\circ,\scr{S})$ is essentially self-adjoint in $L^2(X^\circ,\scr{S})$. Following Atiyah \cite{AtiyahKHomology} and Kasparov \cite{KasparovNovikov} (see also \cite{HigsonRoe}), $\Dirac_{\scr{S}}$ may be thought of as representing the fundamental class for $X^\circ$ in the analytic approach to K-homology.

As mentioned briefly in Remark \ref{r:virtualfund}, we will in fact work with an operator that may be thought of as representing the fundamental class of $\mu_\M^{-1}(\t)=\nu^{-1}(0)\subset X^\circ$. Since the subset $\nu^{-1}(0)$ is often singular, we implement this by twisting the Dirac operator $\Dirac_{\scr{S}}$ on $X^\circ$ by the pullback under $\nu \colon X^\circ \rightarrow \t^\perp$ of a Bott-Thom class $\scr{B} \in K_T(\t^\perp)$ in K-theory.

Let $(\scr{S}_{\t^\perp},c_{\t^\perp})$ denote the $T$-equivariant $\bZ_2$-graded spin representation for $\Cl(\t^\perp)$:
\begin{equation} 
\label{e:Stperp}
\scr{S}_{\t^\perp}=\wedge \n^- \otimes \bC_{\rho},\qquad \c_{\t^\perp}(\xi)=\sqrt{2}(\epsilon(\xi^{1,0})-\iota(\xi^{0,1}))\otimes 1 
\end{equation}
where $\n^- \subset \g_\bC$ is the direct sum of the negative root spaces, $\rho$ is the half sum of the positive roots, and we endow $\t^\perp$ with the complex structure $(\t^\perp)^{1,0}=\n^-$. The corresponding Bott-Thom element for $\t^\perp$ is the element $\scr{B}\in \sK^0_T(\t^\perp)$ represented by the morphism of vector bundles $\t^\perp\times \scr{S}_{\t^\perp}^+\rightarrow \t^\perp\times \scr{S}_{\t^\perp}^-$ given by $\c_{\t^\perp}(\xi)$ over the point $\xi \in \t^\perp$. This element is Weyl antisymmetric: under the transformation of $\sK^0_T(\t^\perp)$ induced by $w \in W$, $\scr{B}$ is multiplied by $(-1)^{l(w)}$ (see for example \cite[Proposition 4.8]{LSQuantLG}, except that in the latter we preferred not to include the $\bC_{\rho}$ shift of \eqref{e:Stperp}).
\begin{definition}
\label{d:virtualfund}
Let $S=\scr{S}_{\t^\perp}\wh{\boxtimes} \scr{S}$. The operator $\Dirac_{\scr{S}}$ trivially extends to act on sections of $S$. Define a new Dirac-type operator $\Dirac$ acting on sections of $S$ by
\[ \Dirac=\i \c_{\t^\perp}(\nu)\hotimes 1+\Dirac_{\scr{S}}.\]
In terms of a local orthonormal frame $e_1,...,e_n$ of $TX$ near $x \in X$, the operator $\Dirac$ is
\[ \Dirac(\tau\hotimes s)(x)=\i\c_{\t^\perp}(\nu(x))\tau(x)\hotimes s(x)+(-1)^{\deg(\tau)}\sum_{j=1}^n \partial_{e_j}\tau(x) \hotimes \c(e_j)s(x)+\tau(x) \hotimes \c(e_j)\nabla_{e_j}s(x).\] 
\end{definition}
The term involving $\c_{\t^\perp}(\nu)$ in Definition \ref{d:virtualfund} plays the role of a potential in the $\t^\perp$-directions, with $-\c_{\t^\perp}(\nu(x))^2=|\nu(x)|^2\rightarrow \infty$ as $x\in X^\circ$ approaches $\partial X$. 

\subsection{The Spin$_c$ Kirwan vector field}\label{s:Kirwanvf}
Throughout this section $\scr{S}$ denotes a level $\ell>0$ spinor bundle on $\M$, and $\phi \colon X \rightarrow \t$ denotes the (projection to $\t$) of the Spin$_c$ moment map for some choice of connection, as in Definition \ref{d:spincmoment}. In this section we prove some special properties of the Kirwan vector field associated to $\phi$. (Analogous properties hold for the Kirwan vector field associated to $\mu$.)
\begin{definition}
The \emph{Spin}$_c$ \emph{Kirwan vector field} is the $N(T)$-invariant vector field $\kappa$ on $X$ given at the point $x \in X$ by
\[ \kappa(x)=\phi(x)_{X}(x),\]
where $\phi(x)_X$ denotes the vector field on $X$ generated by $\phi(x)\in \t$. We also define
\[ \bar{\phi}=(1+\phi^2)^{-1/2}\phi, \qquad \bar{\kappa}(x)=\bar{\phi}(x)_X(x).\]
Clearly $\bar{\phi}$ is a bounded map. Since the actions of $T,\Pi$ on $X$ commute and $X/\Pi$ is compact, $\bar{\kappa}$ is a bounded vector field. 
\end{definition}
Let
\[ \Z=\kappa^{-1}(0_{X})=\bigcup_{\beta \in \B} \Z_\beta, \qquad \Z_\beta=X^\beta \cap \phi^{-1}(\beta) \]
where $\B$ is the infinite, discrete, $W$-invariant subset of $\beta \in \t$ such that $X^\beta \cap \phi^{-1}(\beta) \ne \emptyset$. By properness of $\phi$, each $\Z_\beta$ is compact. Let 
\[ Z=\Z\cap \nu^{-1}(0), \qquad Z_\beta=\Z_\beta \cap \nu^{-1}(0) \]
denote the intersection of $\Z$, $\Z_\beta$ with $\nu^{-1}(0)$. (Note that some of the $Z_\beta$ could be empty.) Although $\kappa$ is \emph{not} $\Pi$-invariant, the subset $\Z$ has, nevertheless, the following property: 
\begin{proposition}
\label{p:weakperiodicity}
There is a minimal \emph{finite} subset $\B_\ast\subset \B$ such that for each $\beta \in \B$ there is a $\beta_\ast \in \B_\ast$ and $\eta \in \Pi$ such that $\Z_\beta=\eta \cdot \Z_{\beta_\ast}$.
\end{proposition}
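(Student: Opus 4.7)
The plan is to partition $\B$ by the $T$-orbit type of the fixed-point set $X^\beta$, reducing the statement to the finiteness of the image of a locally constant function on a compact quotient. Since $T, \Pi \subset LG$ commute (both lie in the abelian subgroup $LT$), the $\Pi$-action on $X$ preserves every $T$-fixed point set $X^H$ for $H \subset T$ a closed subtorus. Compactness of $X/\Pi$ forces the $T$-action to have finitely many orbit types, so only finitely many distinct subsets of $X$ arise as $X^H$; list them $Y_1, \ldots, Y_N$ with maximal subtori $H_i$ satisfying $X^{H_i} = Y_i$ and Lie algebras $\h_i \subset \t$. Since $X^\beta = X^{\overline{\exp(\bR\beta)}}$, we obtain a partition $\B = \bigsqcup_i \B_i$ with $\B_i := \{\beta \in \B : X^\beta = Y_i\} \subset \h_i$, and it suffices to show each $\B_i$ lies in finitely many cosets of a suitable sublattice inside $\h_i$.

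For each $i$, vanishing of $\xi_X$ on $Y_i$ for $\xi \in \h_i$, combined with the moment map relation for $\phi$, implies that the orthogonal projection $p_{\h_i}\circ\phi$ is locally constant on $Y_i$. Setting $Y_i^\circ := Y_i \cap \phi^{-1}(\h_i)$ gives $\B_i \subset \phi(Y_i^\circ)$, with $\phi|_{Y_i^\circ}$ locally constant into $\h_i$. Let $\Pi_i := \{\eta \in \Pi : \ell\eta \in \h_i\}$, a full-rank lattice in the rational subspace $\ell^{-1}\h_i$; by \eqref{e:Piequivphi} combined with $\eta Y_i = Y_i$, this subgroup preserves $Y_i^\circ$. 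The natural map $Y_i^\circ/\Pi_i \to Y_i/\Pi$ is injective (if $\eta x \in Y_i^\circ$ for $x \in Y_i^\circ$ and $\eta \in \Pi$, then $\ell\eta = \phi(\eta x) - \phi(x) \in \h_i$, forcing $\eta \in \Pi_i$), and its image is the preimage of $0$ under the continuous map $Y_i/\Pi \to \h_i^\perp/\ell\, p_{\h_i^\perp}(\Pi)$ induced by the $\h_i^\perp$-component of $\phi$, hence closed and compact in $Y_i/\Pi$.

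The locally constant $\phi|_{Y_i^\circ}$ now descends to a locally constant map $Y_i^\circ/\Pi_i \to \h_i/\ell\Pi_i$ on a compact domain, hence with finite image; so $\B_i$ is contained in finitely many cosets of $\ell\Pi_i$ in $\h_i$. Selecting one representative $\beta_{i,1}, \ldots, \beta_{i,k_i}$ per coset for each $i$ and discarding redundancies produces the desired minimal finite subset $\B_* \subset \B$. For any $\beta \in \B_i$, write $\beta = \beta_{i,j} + \ell\eta$ with $\eta \in \Pi_i \subset \Pi$; then, using $\eta Y_i = Y_i$ and \eqref{e:Piequivphi},
\[ \eta\cdot \Z_{\beta_{i,j}} = \eta\cdot(Y_i \cap \phi^{-1}(\beta_{i,j})) = Y_i \cap \phi^{-1}(\beta_{i,j}+\ell\eta) = X^\beta \cap \phi^{-1}(\beta) = \Z_\beta. \]
The main subtlety will be verifying compactness of $Y_i^\circ/\Pi_i$, which rests on showing that $p_{\h_i^\perp}(\Pi) \cong \Pi/(\Pi\cap\h_i)$ is itself a full-rank lattice in $\h_i^\perp$ — a consequence of the saturation of $\Pi \cap \h_i$ in $\Pi$ (since $\h_i$ is a vector subspace) and the rationality of $\h_i$ with respect to $\Pi$.
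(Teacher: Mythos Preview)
Your argument is correct and follows the same overall strategy as the paper: partition $\B$ according to the stabilizer type $X^\beta=X^{\t_i}$ (your $Y_i$), use the moment-map relation $\d\langle\phi,\xi\rangle=-\iota(\xi_X)\varpi$ to see that the $\t_i$-component of $\phi$ is locally constant on $X^{\t_i}$, and then run a lattice/finiteness argument for each $i$. The packaging of the last step differs. The paper observes that each $\beta\in\B_i$ is the foot $\pr_\Delta(0)$ of the perpendicular from the origin to one of the affine subspaces $\Delta$ parallel to $\t_i^\perp$ containing the $\phi$-image of a component of $X^{\t_i}$, counts these subspaces modulo $\Pi$, and then invokes the finite index of $\t_i\cap\Pi$ in $\pr_{\t_i}(\Pi)$; this yields the explicit estimate $\#\B_*\le\sum_i s_i n_i$. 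You instead introduce $Y_i^\circ=Y_i\cap\phi^{-1}(\h_i)$ and the sublattice $\Pi_i=\Pi\cap\ell^{-1}\h_i$, and show that $Y_i^\circ/\Pi_i$ embeds as a closed (hence compact) subset of $Y_i/\Pi$, so the locally constant map $\phi\colon Y_i^\circ/\Pi_i\to\h_i/\ell\Pi_i$ has finite image. Your route is a bit more topological and avoids tracking affine subspaces; the paper's route gives a quantitative bound.

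One small slip worth noting: for non-simple $G$ (i.e.\ $m>1$) the operator $\ell$ need not commute with the projection $p_{\h_i^\perp}$, so the target of your descended map should be $\h_i^\perp/p_{\h_i^\perp}(\ell\Pi)$ rather than $\h_i^\perp/\ell\,p_{\h_i^\perp}(\Pi)$. This does not affect the argument, since $p_{\h_i^\perp}(\ell\Pi)$ is a full-rank lattice in $\h_i^\perp$ by exactly the rationality and saturation reasoning you give in your final sentence.
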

Before giving the proof, we introduce the notation 
\begin{equation}
\label{e:stablist}
\{\t_i\subset \t\}_{i \in \I},
\end{equation} 
for the list of rational subspaces that arise as infinitesimal stabilizers of subsets of $X$. Since the actions of $T$, $\Pi$ on $X$ commute and since $X/\Pi$ is compact, the set $\I$ is \emph{finite}.
\begin{proof}
For $i \in \I$, let $\B_i \subset \B$ be the subset of $\beta$ such that $X^\beta=X^{\t_i}$. Thus $\cup \B_i=\B$, and as $\beta$ ranges over $\B_i$, $X^\beta=X^{\t_i}$ does not vary. Therefore it suffices to show that the image $\exp(\B_i)$ of each $\B_i$ under the quotient map $\exp \colon \t \rightarrow \t/\Pi=T$ is finite.

For an affine subspace $\Delta \subset \t$, let $\Delta_0$ be the unique subspace parallel to $\Delta$. Note that since the inner product $\cdot$ on $\t$ is integral, i.e. $\Pi \cdot \Pi \subset \bZ$, a subspace $\Delta_0$ is rational if and only if its orthogonal complement $\Delta_0^\perp$ is.

The map $\phi$ satisfies $\d \pair{\phi}{\xi}=-\iota(\xi_X)\varpi$, and it follows from this equation that the image under $\phi$ of each connected component of the fixed-point set $X^{\t_i}$ is contained in an affine subspace $\Delta$ with $\Delta_0^\perp=\t_i$.  Let $\S_i$ denote the collection of affine subspaces $\Delta$ (each a translate of $\t_i^\perp$) that arise in this way from a connected component of $X^{\t_i}$. Note that $\Pi$ acts naturally on $\S_i$, and since $X/\Pi$ is compact, the set of cosets of the $\Pi$ action is finite; let $s_i$ be its cardinality. 

If $\beta \in \B_i$ then there exists a $\Delta \in \S_i$ such that $\beta=\pr_\Delta(0)$, the orthogonal projection of $0$ onto $\Delta$. Now suppose $\Delta \in \S_i$ and $\Delta^\prime=\Delta+\eta$ for some $\eta \in \Pi$.  Then $\Delta_0^\perp=(\Delta'_0)^\perp=\t_i$ and
\[ \pr_{\Delta^\prime}(0)=\pr_\Delta(0)+\pr_{\t_i}(\eta),\]
where $\pr_{\t_i}$ denotes orthogonal projection to $\t_i$.  Thus when $\Delta$ is translated by some $\eta \in \Pi$, the projection $\pr_\Delta(0)$ changes by an element of $\pr_{\t_i}(\Pi)$. By integrality of the inner product, $\pr_{\t_i}(\Pi)$ is a lattice in $\t_i$.  On the other hand, since $\t_i$ is integral, $\t_i \cap \Pi$ is a full-rank lattice in $\t_i$, hence has some finite index $n_i$ in $\pr_{\t_i}(\Pi)$. It follows that $\# \exp(\B_i)\le s_i\cdot n_i$. (Thus in fact we have the bound $\# \B_\ast \le \sum_{i \in \I} s_i\cdot n_i$.)
\end{proof}

For each $\beta_* \in \B_*$, let $\U_{\beta_*}$ be an open $T$-invariant neighborhood of $\Z_{\beta_*}$ in the manifold with boundary $X$. Using translations by elements of $\Pi$, we obtain open neighborhoods $\U_\beta$ of $\Z_\beta$ for each $\beta \in \B$. Making the $\U_{\beta_*}$ smaller if necessary, we can ensure that closure of the images $\phi(\U_\beta)$ are pair-wise disjoint. Let
\[ \U=\bigcup_{\beta \in \B} \U_\beta.\]
Let $\d(\cdot,\cdot)$ be the topological metric on $X$ determined by the Riemannian metric $g_X$. Let $\d_\t(\cdot,\cdot)$ denote the metric induced by the norm on $\t$.
\begin{lemma}
\label{l:distanceU}
There is an $\epsilon>0$ such that for all $i \in \I$ (see \eqref{e:stablist}),
\[ \d(x,X^{\t_i})<\epsilon, \quad \d_\t(\phi(x),\t_i)<\epsilon \quad \Rightarrow \quad x \in \U.\]
\end{lemma}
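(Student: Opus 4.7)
The approach will be to handle the $\epsilon=0$ special case by hand, and then extend to small $\epsilon>0$ by a $\Pi$-equivariant compactness argument on $X/\Pi$.

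First I would establish the $\epsilon=0$ case: any $y\in X^{\t_i}$ with $\phi(y)\in\t_i$ belongs to $\U$. Let $\t_k\in\{\t_j\}_{j\in\I}$ denote the full infinitesimal stabilizer subalgebra of $y$, so that $\t_i\subset\t_k$. Setting $\beta:=\phi(y)$, we have $\beta\in\t_i\subset\t_k$, which forces $\kappa(y)=\beta_X(y)=0$ and $y\in X^\beta$. Hence $y\in X^\beta\cap\phi^{-1}(\beta)=\Z_\beta$ for $\beta\in\B$, so $y\in\U_\beta\subset\U$.

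For the general statement I would argue by contradiction. If no $\epsilon$ works, then since $\I$ is finite I can extract a single $i\in\I$ and a sequence $x_n\in X\setminus\U$ with $\d(x_n,X^{\t_i})\to 0$ and $\d_\t(\phi(x_n),\t_i)\to 0$. The set $\U$ is $\Pi$-invariant by construction, and $X^{\t_i}$ is $\Pi$-invariant because the $\Pi$- and $T$-actions on $X$ commute. Since $\Pi$ acts freely and properly on $X$ with compact quotient $X/\Pi$, I can choose $\eta_n\in\Pi$ so that $\tilde x_n:=\eta_n\cdot x_n$ lies in a precompact fundamental domain; after passing to a subsequence $\tilde x_n\to\tilde x_\infty$, and $\Pi$-invariance of $X^{\t_i}$ gives $\tilde x_\infty\in X^{\t_i}$.

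The main obstacle is that the hypothesis on $\phi$ is not $\Pi$-invariant: by \eqref{e:Piequivphi} one has $\phi(\tilde x_n)=\phi(x_n)+\ell\eta_n$, so $\phi(\tilde x_n)$ is only close to the moving affine subspace $\t_i+\ell\eta_n$, which a priori could drift off in $\t/\t_i$. Projecting to $\t_i^\perp$, the convergence of $\pr_{\t_i^\perp}(\phi(\tilde x_n))$ and of $\pr_{\t_i^\perp}(\phi(x_n))\to 0$ together force $\ell\pr_{\t_i^\perp}(\eta_n)$ to converge in $\t_i^\perp$. The crucial input is that $\t_i$ is rational, so $\Pi\cap\t_i$ has full rank in $\t_i$; this implies $\pr_{\t_i^\perp}(\Pi)$ is a genuine discrete lattice in $\t_i^\perp$, and therefore the convergent sequence $\ell\pr_{\t_i^\perp}(\eta_n)$ is eventually constant, equal to $\ell\pr_{\t_i^\perp}(\eta_\infty)$ for some $\eta_\infty\in\Pi$. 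Then $y:=\eta_\infty^{-1}\cdot\tilde x_\infty$ satisfies $y\in X^{\t_i}$ and $\phi(y)\in\t_i$, so by the first step $y\in\U$, and by $\Pi$-invariance $\tilde x_\infty\in\U$. Openness of $\U$ gives $\tilde x_n\in\U$ for $n$ large, hence $x_n\in\U$, contradicting the choice of $x_n$. Uniformity of $\epsilon$ over the finitely many $i\in\I$ is automatic.
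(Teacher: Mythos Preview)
Your argument is correct, and is in fact a bit cleaner than the paper's. One small notational slip: you write $\ell\pr_{\t_i^\perp}(\eta_n)$, but in general $\ell$ (multiplication by $\ell_j$ on the $j$-th simple factor) need not commute with the orthogonal projection $\pr_{\t_i^\perp}$, so the relevant quantity is $\pr_{\t_i^\perp}(\ell\eta_n)$. This does not affect the argument, since $\ell\Pi$ is a sublattice of $\Pi$ and hence $\pr_{\t_i^\perp}(\ell\Pi)\subset\pr_{\t_i^\perp}(\Pi)$ is still discrete by rationality of $\t_i$; one may then simply take $\eta_\infty=\eta_n$ for any sufficiently large $n$.

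The paper proceeds differently. It first proves an auxiliary statement: there is $\epsilon'>0$ such that $\d(x,X^\beta)<\epsilon'$ and $\d_\t(\phi(x),\beta)<\epsilon'$ imply $x\in\U$ for every $\beta\in\B$ (immediate from the finite set $\B_*$ and $\Pi$-invariance). Then, in the contradiction argument, instead of translating $x_n$ into a fundamental domain, the paper picks $y_n\in X^{\t_i}$ in the $\Pi$-orbit of a fixed limit point with $\d(x_n,y_n)\to 0$, and uses the moment-map property that $\phi$ maps each connected component of $X^{\t_i}$ into an affine translate of $\t_i^\perp$. Rationality of $\t_i$ enters as a uniform gap: such an affine translate either meets $\t_i$ or stays at distance $\ge c>0$ from it. This locates a nearby $\beta\in\B\cap\t_i$ to which the auxiliary $\epsilon'$-statement applies. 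Your route bypasses the auxiliary statement and the component discussion entirely, trading them for the single observation that $\pr_{\t_i^\perp}(\Pi)$ is discrete; the paper's route has the minor advantage of making the $\epsilon'$-statement about neighborhoods of $\Z_\beta$ explicit, though it is not used elsewhere.
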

\begin{proof}
First note that there is a $\epsilon'>0$ such that for all $\beta \in \B$, 
\begin{equation} 
\label{e:epsilonprime}
\d(x,X^\beta)<\epsilon', \quad \d_\t(\phi(x),\beta)<\epsilon' \quad \Rightarrow \quad x \in \U.
\end{equation}
Indeed it is clear that we can find an $\epsilon'>0$ that works for $\beta$ in the finite set $\B_*$, and it then works for all $\beta \in \B$ by $\Pi$-invariance.

Suppose the statement is false. Then there exists an $i \in \I$ and a sequence $\{x_n\}$ in $\ol{X} \backslash \U$ such that $\d(x_n,X^\h), \d_\t(\phi(x_n),\h) \rightarrow 0$, where $\h=\t_i$. Passing to a subsequence, we can assume $x_n\Pi \rightarrow x\Pi$ in the compact space $X/\Pi$. Necessarily $x\Pi \in (X/\Pi)^\h=X^\h/\Pi$. Let $y_n \in X$ be such that $y_n\Pi=x\Pi$ and $\d(x_n,y_n)$ is minimal. Then $\d(x_n,y_n)\rightarrow 0$ and $y_n \in X^\h$. Note also that by $\Pi$-equivariance, $\phi$ is Lipschitz for some constant $C\ge 1$. Therefore
\begin{equation} 
\label{e:tdistconv}
\d_\t(\phi(y_n),\h)\le \d_\t(\phi(y_n),\phi(x_n))+\d_\t(\phi(x_n),\h) \le C\cdot \d(y_n,x_n)+\d_\t(\phi(x_n),\h)\rightarrow 0. 
\end{equation}

Let $X^\h_j$, $j \in \J$ be the connected components of $X^\h$. There is a finite subset $\J_* \subset \J$ such that each $X^\h_j$ is of the form $\eta \cdot X^\h_{j_*}$ for some $j_*\in \J_*$ and $\eta \in \Pi$. The moment map sends each $X^\h_j$ into a closed subset of an affine hyperplane parallel to the orthogonal complement $\h^\perp$. By $\Pi$-periodicity and because $\h$ is rational, there is a constant $c>0$ such that for all $j \in \J$, either $\phi(X_j^\h)\cap \h \ne \emptyset$, or else $\d_\t(\phi(X^\h_j),\h)>c$. In particular \eqref{e:tdistconv} implies that $\{y_n\}$ is eventually contained in the union of those components $X_j^\h$ such that $\phi(X_j^\h) \cap \h\ne \emptyset$. Then by \eqref{e:tdistconv}, there exists $n_0$ and $\beta \in \phi(X^\h)\cap \h\subset \B\cap \h$ such that 
\[ \d_\t(\phi(y_{n_0}),\beta)<\frac{\epsilon'}{2},\qquad \d(x_{n_0},y_{n_0})<\frac{\epsilon'}{2C}. \]
These inequalities, together with $y_{n_0} \in X^\h\subset X^\beta$, imply $\d(x_{n_0},X^\beta)<\epsilon'/2C<\epsilon'$, $\d_\t(\phi(x_{n_0}),\beta)<\epsilon'$, and hence \eqref{e:epsilonprime} yields $x_{n_0} \in \U$, a contradiction.
\end{proof}
\begin{proposition}
\label{p:Kirwanpos}
There is a constant $c>0$ such that $|\kappa(x)|\ge c$ for all $x \in X\backslash \U$.
\end{proposition}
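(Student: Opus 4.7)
The plan is to argue by contradiction using Lemma \ref{l:distanceU}. Suppose no such $c>0$ exists; then there is a sequence $x_n \in X \setminus \U$ with $|\kappa(x_n)| \to 0$. Since $X/\Pi$ is compact, pick $\eta_n \in \Pi$ so that $x_n' := \eta_n^{-1}\cdot x_n$ lies in a fixed compact fundamental domain for the $\Pi$-action, and pass to a subsequence along which $x_n' \to x_\infty \in X$. Set $\t_\ast := \t_{x_\infty}$, which is one of the rational subspaces in the finite list \eqref{e:stablist}. Because $\Pi$ acts by isometries commuting with the $T$-action, the generators $\xi_X$ for $\xi \in \t$ are $\Pi$-invariant vector fields, so
\[ |\kappa(x_n)| = |(\xi_n)_X(x_n')|, \qquad \xi_n := \phi(x_n) = \phi(x_n') + \ell\eta_n, \]
where the second equality is \eqref{e:Piequivphi}.

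First I would dispose of the case when $\{\eta_n\}$ is bounded: by discreteness of $\Pi$, some subsequence is constant, say $\eta_n = \eta$, so $x_n \to \eta \cdot x_\infty$; continuity of $\kappa$ forces $\kappa(\eta\cdot x_\infty) = 0$, whence $\eta \cdot x_\infty \in \Z_\beta$ for some $\beta \in \B$, and $x_n \in \U_\beta \subset \U$ eventually, contradicting $x_n \notin \U$. So I may assume $|\eta_n|\to \infty$, which forces $|\xi_n|\to \infty$.

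The main analytic step is a slice decomposition in a $T$-equivariant tubular neighborhood of $x_\infty$ modeled on the normal bundle of $X^{\t_\ast}$. Decompose $\xi_n = \xi_{n,0}+\xi_{n,1}$ with $\xi_{n,0}\in\t_\ast$ and $\xi_{n,1}\in\t_\ast^\perp$, and write $x_n' = \exp_{p_n}(v_n)$ with $p_n \in X^{\t_\ast}$, $p_n\to x_\infty$, and a normal vector $v_n \to 0$. To leading order $T_{x_n'}X$ splits orthogonally into the tangent to $X^{\t_\ast}$ at $p_n$ and the normal fibre, and the generator decomposes as $(\xi_n)_X(x_n') = (\xi_{n,1})_X(p_n)\oplus(\xi_n\cdot v_n) + O(|v_n|)$, since $(\xi_{n,0})_X$ vanishes along $X^{\t_\ast}$ and the linear contribution $\xi_n\cdot v_n$ lies in the normal fibre. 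Because $\t_{x_\infty} = \t_\ast$, the induced $T/\t_\ast$-action on $X^{\t_\ast}$ is locally free at $x_\infty$; hence the linear map $\zeta \mapsto \zeta_X(p_n)$ on $\t_\ast^\perp$ is bounded below uniformly for $p_n$ in a small neighborhood of $x_\infty$. Combined with orthogonality of the two components above, $|(\xi_n)_X(x_n')|\to 0$ forces $|\xi_{n,1}|\to 0$.

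To conclude, I would invoke Lemma \ref{l:distanceU} with $\t_i = \t_\ast$. Since $\Pi$ commutes with $T$ and hence preserves $X^{\t_\ast}$ setwise, $\d(x_n,X^{\t_\ast}) = \d(x_n',X^{\t_\ast}) \to \d(x_\infty,X^{\t_\ast}) = 0$, and $\d_\t(\phi(x_n),\t_\ast) = |\xi_{n,1}|\to 0$ by the previous paragraph. Hence $x_n\in \U$ for large $n$, contradicting $x_n\in X\setminus\U$. The part requiring most care will be making the slice decomposition and the orthogonal lower bound precise in the $g_X$-metric at $x_n'$ rather than at $p_n$ (and verifying the claimed absence of cross-terms between the tangential and normal pieces); additionally, if $x_\infty$ lies on $\partial X$ one should work in a $T$-invariant collar chart so that the normal exponential map and tubular neighborhood of $X^{\t_\ast}$ extend across $\partial X$.
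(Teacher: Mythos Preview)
Your overall strategy matches the paper's: contradiction, pass to a limit in $X/\Pi$, let $\t_\ast$ be the stabilizer of the limit point, use a slice-type argument to show the $\t_\ast^\perp$-component of $\phi(x_n)$ tends to zero, then invoke Lemma~\ref{l:distanceU}. Two remarks on the implementation.

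First, your dichotomy into $\{\eta_n\}$ bounded versus unbounded is unnecessary. The paper never separates these cases: it works with $\phi(x_n)$ directly (your $\xi_n$) and the slice argument applies uniformly. Your bounded case is handled by the same mechanism.

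Second, and more substantively, your tubular-neighborhood expansion $(\xi_n)_X(x_n') = (\xi_{n,1})_X(p_n)\oplus(\xi_n\cdot v_n) + O(|v_n|)$ has a problematic error term. Since $|\xi_n|\to\infty$, an additive $O(|v_n|)$ with constant independent of $n$ is not what one actually gets; the genuine error is a multiplicative distortion $(1\pm C|v_n|)$ in the metric when comparing $g_X$ to a reference metric on the normal bundle. You flag this as the point needing care, and indeed once stated correctly it works. The paper sidesteps this entirely with a cleaner argument: rather than expanding around $X^{\t_\ast}$, it uses the slice theorem to produce, on a neighborhood $V$ of the limit in $X/\Pi$, a direct-sum splitting $T(X/\Pi)|_V = \varrho(V\times \t_\ast^\perp)\oplus A$ with $\varrho(V\times\t_\ast)\subset A$ and $|\varrho(\xi)|\ge\delta|\xi|$ for $\xi\in\t_\ast^\perp$. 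Then $\kappa(x_n)$ decomposes as a sum of a vector in $A$ and one in $\varrho(V\times\t_\ast^\perp)$, the angle between the two subbundles is bounded below by some $\theta>0$ over $V$, and elementary trigonometry gives $|\kappa(x_n)|\ge |\varrho_{x_n}(\phi_{\t_\ast^\perp}(x_n))|\sin\theta\ge \delta\sin\theta\,|\phi_{\t_\ast^\perp}(x_n)|$. This works directly at $x_n$ with no expansion and no error terms to chase.
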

\begin{proof}
Suppose the statement is false. Then there is a sequence $\{x_n\}$ in $X\backslash \U$ such that $|\kappa(x_n)|\rightarrow 0$. Passing to a subsequence, we can assume that $x_n\Pi \rightarrow x\Pi$ in the compact space $X/\Pi$. Let $\h \subset \t$ be maximal such that $x\Pi \in (X/\Pi)^\h=X^\h/\Pi$ (thus $\h=\t_i$ for some $i \in \I$, see \eqref{e:stablist}). Let $\phi=\phi_\h+\phi_{\h^\perp}$ be the decomposition of $\phi$ into its $\h$, $\h^\perp$ components. Since $\d(x_n,X^\h)\rightarrow 0$, to obtain a contradiction it suffices, by Lemma \ref{l:distanceU}, to show that $\d_\t(\phi(x_n),\h)=|\phi_{\h^\perp}(x_n)|\rightarrow 0$. 

Let $\varrho \colon X\times \t \rightarrow TX$ be the infinitesimal action. We will use the same symbol for the infinitesimal action on $X/\Pi$. By the slice theorem for actions of compact Lie groups, there is a compact neighborhood $V$ of $x\Pi$ in $X/\Pi$, a constant $\delta>0$ and a smooth subbundle $A$ such that
\[ T(X/\Pi)|_V=\varrho(V\times \h^\perp)\oplus A, \qquad \varrho(V\times \h)\subset A,\]
and
\begin{equation} 
\label{e:bdedbelowonhperp}
|\varrho_{x'\Pi}(\xi)|>\delta|\xi|, \qquad x'\Pi \in V,\quad \xi \in \h^\perp.
\end{equation}
Let $\theta \in (0,\pi/2]$ be the minimal angle (with respect to the Riemannian metric $g_{X/\Pi}$ on $X/\Pi$) between $\varrho(V\times \h^\perp)$, $A$ over the compact set $V$.

The Kirwan vector field 
\[ \kappa(x_n)=\varrho_{x_n}(\phi(x_n))=\varrho_{x_n}(\phi_\h(x_n))+\varrho_{x_n}(\phi_{\h^\perp}(x_n)).\]
For $n$ sufficiently large, $x_n\Pi \in V$, and then by elementary geometry
\[ |\kappa(x_n)|\ge |\varrho_{x_n}(\phi_{\h^\perp}(x_n))|\sin(\theta). \]
Since $|\kappa(x_n)|\rightarrow 0$, we get $|\varrho_{x_n}(\phi_{\h^\perp}(x_n))|\rightarrow 0$. By \eqref{e:bdedbelowonhperp}, $|\phi_{\h^\perp}(x_n)|\rightarrow 0$, as desired.
\end{proof}

\section{Admissible K-theory classes and the analytic index}\label{s:quasi}
In this section we explain the admissibility condition on K-theory classes alluded to in the introduction, construct the index map, and prove the non-abelian localization formula. Throughout this section $\M$ is a proper Hamiltonian $LG$-space, $X\subset \M$ is a global transversal with spinor bundle $\scr{S}$ at level $\ell>0$.

\subsection{K-theory}
Let $H$ be a compact Lie group and let $Y$ be an $H$-space. For us the $H$-equivariant K-theory of $Y$, denoted $K_H(Y)$, refers to the non-compactly supported $H$-equivariant $0$-th topological K-theory group of $Y$. A standard description of $K_H(Y)$ is as the abelian group of homotopy classes of continuous maps from $Y$ to the space of Fredholm operators on $L^2(H)\otimes \ell^2(\bZ)$ equipped with the norm topology, cf. \cite[Section 5]{segal1970fredholm}. 

It will be convenient to represent K-theory classes using pairs $(\E,Q)$, where $\E=\E^+\oplus \E^-$ is an $H$-equivariant $\bZ_2$-graded Hilbert bundle over $Y$ (with structure group the unitary group carrying the norm topology) and $Q$ is a family of possibly unbounded odd self-adjoint operators on the fibres of $\E$, such that $F=Q(1+Q^2)^{-1/2}$ is a continuous section of $\scr{B}(\E)$ (bounded operators on the fibres of $\E$) and $1-F^2=(1+Q^2)^{-1}$ is a continuous section of $\scr{K}(\E)$ (compact operators on the fibres of $\E$). Given such a pair $(\E,Q)$, the corresponding continuous map from $Y$ to the space of Fredholm operators is obtained by first trivializing $\E$ (after stabilization if necessary) and then taking the component $F^+$ of $F$ that maps $\E^+$ to $\E^-$. See \cite{HigsonPrimer, KasparovNovikov, FHTII} for further context on representing K-theory classes with data of this sort.

\subsection{Admissible cycles}\label{s:qpcycles}
\begin{definition}
Let $\E$ be a $T$-equivariant Hilbert bundle over $X$. We say that $\E$ is $T$-\emph{finite} if $\E$ can be realized as a subbundle of a trivial $T$-equivariant Hilbert bundle $X \times \E_0$ such that $\E_0$ has finitely many $T$-isotypical components.
\end{definition}
\begin{remark}
$\E_0$ has finitely may $T$-isotypical components if and only if the map $T\rightarrow \scr{U}(\E_0)$ is continuous for the norm topology. Similarly $\E$ is $T$-finite if and only if $T \rightarrow \Aut(\E)$ is continuous for the norm topology.
\end{remark}

We will specialize to $T\times \Pi$-equivariant $T$-finite smooth Hilbert bundles $\E \rightarrow X$. Since $T \times \Pi$ acts properly on $X$, it is always possible to construct smooth $T\times \Pi$-invariant Hermitian connections on $\E$. Two such connections differ by a smooth $T\times \Pi$-invariant section of $\mf{u}(\E)\subset \scr{B}(\E)$, the bundle of bounded skew-adjoint endomorphisms of $\E$.
\begin{definition}
\label{d:mmH}
Let $(\E,\nabla^\E)$ be a $T\times \Pi$-equivariant $T$-finite smooth Hilbert bundle with $T\times \Pi$-invariant Hermitian connection. Define the \emph{moment map} $\phi_\E$ of the pair $(\E,\nabla^\E)$ (cf. \cite{BerlineGetzlerVergne} in the finite dimensional case) by
\begin{equation}
\label{e:mmH}
2\pi\i\pair{\phi_\E}{\xi}=\L^\E_\xi - \nabla^\E_{\xi_X}, \qquad \xi \in \t.
\end{equation}
Note that $\pair{\phi_\E}{\xi}\in C^\infty(X,\scr{B}(\E))$ is bounded because of the $T$-finiteness condition, hence $\phi_\E \in \t^*\otimes C^\infty(X,\scr{B}(\E))$ and is $T\times \Pi$-invariant. 
\end{definition}

\begin{definition}
\label{d:admissible}
Let $\E \rightarrow X$ be a $T$-equivariant $\bZ_2$-graded smooth Hilbert bundle with $T$-invariant Hermitian connection $\nabla^\E$, and let $Q=\{Q_x\}_{x \in X}$ be a $T$-equivariant family of odd unbounded self-adjoint operators on the fibres $\{\E_x\}_{x \in X}$ of $\E$. The triple $(\E,\nabla^\E,Q)$ will be called an \emph{admissible cycle} if
\begin{enumerate}
\item For each $x \in X$, $(1+Q_x^2)^{-1}$ is a compact operator.
\item $(\E,\nabla^\E)$ is $T \times \Pi$-equivariant and $\E$ is $T$-finite.
\item The family $Q$ is smooth in the sense that for any smooth compactly supported section $s$ of $\E$ such that $s_x \in \dom(Q_x)$ for all $x \in X$, the compactly supported section $Qs$ of $\E$ is again smooth.
\item $\nabla^\E$-parallel transport maps the subset $\dom(Q_\bullet)=\bigsqcup_{x \in X}\dom(Q_x)\subset \E$ to itself, and the total $\nabla^\E$-covariant derivative of $Q$ is a smooth bounded section of $T^*X\otimes \scr{B}(\E)$.
\end{enumerate}
\end{definition}
\ignore{
\begin{remark}
\texttt{To think about further}: It seems to me likely that one could reduce to this scenario from a simpler and more general starting point. A few preliminary thoughts. Start with a smooth self-adjoint bounded Kasparov representative $F$ acting on a smooth trivial Hilbert bundle $\H$ with fiber $H$. Choose an unbounded self-adjoint linear map $r \colon H'=\dom(r) \rightarrow H$ such that $r^{-1}$ exists (so $r$ is bounded below by a strictly positive constant) and is compact. Try replacing $F$ with $Q=Fr$ so $(1+Q^*Q)^{-1}=(1+rF^2r)^{-1}=r^{-1}(r^2+F^2)^{-1}r^{-1}$ is certainly. This $Q$ is not self-adjoint, but I believe using the usual trick one can make it so. Its domain bundle is preserved by the trivial connection. So it looks like roughly we want the derivative of $F$ to be compact in a uniform manner, so that, working backwards, $r$ can be chosen appropriately: we need $(\nabla F)r$ to be a bounded section of $\scr{B}(\E)$, basically. If gradient of $F$ is unbounded, we're stuck??

Another comment is that it can be difficult to control the gradient of $F$. For example if $F$ acts on a finite dimensional vector bundle and is $f\Id$ for some function $f$, then we cannot pick a connection to arrange that the gradient is small.
\end{remark}
}
\begin{remark}
\label{r:smoothness}
Condition (d) requires further explanation. The condition that $\nabla^\E$-parallel transport maps $\dom(Q_\bullet)$ into itself means that we may simultaneously trivialize $\E$, $\dom(Q_\bullet)$ on geodesic balls $B_{x_0}$ in $X$ by parallel translation along radial geodesics (in particular observe that $\dom(Q_\bullet)$ is a subbundle of $\E$). Thus $Q|_{B_{x_0}}$ can be viewed as a family of unbounded self-adjoint operators on a fixed Hilbert space $\E_{x_0}$ having the same domain $\dom(Q_{x_0})$. A functional analytic argument \cite[p.549]{kriegl1997convenient} shows that the seemingly weak smoothness condition (c) already implies that the corresponding map 
\begin{equation} 
\label{e:Qtrivialized}
Q|_{B_{x_0}}\colon B_{x_0} \rightarrow \scr{B}(\dom(Q_{x_0}),\E_{x_0}) 
\end{equation}
is smooth, where $\dom(Q_{x_0})$ is equipped with the graph norm. Consequences of this include that the resolvents $(Q\pm \i)^{-1}$ vary smoothly, and also that a section $s$ of $\dom(Q_\bullet)|_{B_{x_0}}$ (topologized with the graph norm of $Q_{x_0}$) is smooth if and only if $s$ is smooth as a section of $\E|_{B_{x_0}}$ (because $s=(Q+\i)^{-1}(Q+\i)s$). The total $\nabla^\E$-covariant derivative of $Q$ at the point $x_0$ can be defined to be the derivative of the map \eqref{e:Qtrivialized} at the point $x_0$. The result is a priori an element of $T^*_{x_0}X\otimes \scr{B}(\dom(Q_{x_0}),\E_{x_0})$, and part of condition (d) is that we require it to extend continuously to an element of $T^*_{x_0}X\otimes \scr{B}(\E_{x_0})$. The second part of condition (d) is that the section of $T^*X\otimes \scr{B}(\E)$ obtained by assembling the total derivative at all points $x_0 \in X$ should be smooth and bounded.
\end{remark}
\begin{remark}
Notice that we \emph{do not} assume $Q$ is $\Pi$-equivariant. Example \ref{ex:dbarondisk} below shows that it is even possible that $\dom(Q_\bullet)$ fails to be $\Pi$-invariant.
\end{remark}

The smoothness assumption on $Q$ implies (a fortiori) that $F=Q(1+Q^2)^{-1/2}$ varies continuously in the norm topology, and hence an admissible cycle represents a K-theory class. %Let $\varrho$ be the obvious representation of $C_0(X)$ on $C_0(X,\E)$ by multiplication operators.
\begin{definition}
A class $\sf{E}\in K_T(X)$ is \emph{admissible} if there is an admissible cycle $(\E,\nabla^\E,Q)$ such that the pair $(\E,Q)$ represents $\sf{E}$. A class $\sf{E} \in K_T(\M)$ is \emph{admissible} if its pullback to $X$ is admissible. We use the notation
\[ K_T^\ad(X)\subset K_T(X), \qquad K_T^\ad(\M)\subset K_T(\M) \]
for the subsets of admissible classes. It is immediate from the definition that these are $R(T)$-subalgebras. An element of $K_G(\M)$ is \emph{admissible} if its image in $K_T(\M)$ is admissible, and such classes likewise form an $R(G)$-subalgebra of $K_G(\M)$.
%$(C_0(X,\E),\varrho,Q(1+Q^2)^{-1/2})$ is a Kasparov cycle representing $\sf{E}$.
\end{definition}
\begin{example}
As already mentioned in the introduction, any K-theory class represented by a finite rank $LG$-equivariant vector bundle on $\M$ is admissible.
\end{example}
\begin{example}
\label{ex:dbarondisk}
Let $\Sigma=\bD$ be the disk with its standard metric an complex structure, and let $\M_\bD\simeq LG/G$ the corresponding Hamiltonian $LG$-space. Let $V$ be a finite-dimensional representation of $G$. For $A \in \A_{fl,\bD}$ let 
\[ \bar{\partial}_A=\bar{\partial}+A^{0,1} \colon L^{2,1}_{<0}(\bD,V)\rightarrow L^2(\bD,V\otimes \wedge^1 T^{0,1}\bD) \]
be the corresponding $\bar{\partial}$-operator, with domain $L^{2,1}_{<0}(\bD,V)$ consisting of $V$-valued $L^{2,1}$ functions $f$ on $\bD$ such that $f|_{\partial \bD}$ has only negative Fourier modes. The family $\bar{\partial}_A+\bar{\partial}_A^*$, $A \in \A_{fl,\bD}$ descends to a family of Fredholm operators $Q$ over $\M_\bD=\A_{fl,\bD}/\G_{\bD,\partial \bD}$ acting on sections of a Hilbert bundle $\E$. In the sequel to this article we prove that the class in $K_G(\M_\bD)$ represented by $(\E,Q)$ is admissible, and that it is not induced by any $LG$-equivariant vector bundle on $\M_\bD$.
\end{example}
\begin{example}
Let $L\rightarrow \M$ be a level $(1,...,1)$ line bundle. Then the K-theory class represented by $L$ is \emph{not} admissible. To see this let $x \in X$ be fixed by some 1-parameter subgroup $T'\subset T$, and let $\lambda'$ be the weight for the action of $T'$ on $L|_x$. For each $\eta \in \Pi$ there is a restriction map $r_\eta\colon K_T(\M)\rightarrow R(T')$ given by pullback to the point $\eta \cdot x$. By \eqref{e:commutation}, $r_\eta(L)=\lambda'+\eta'$ where $\eta'=\eta|_{T'}$. The set $\{\lambda'+\eta'\mid \eta \in \Pi\}$ is an unbounded subset of the weight lattice of $T'$. This cannot occur for an admissible class by the $T$-finiteness condition.
\end{example}

\subsection{The analytic index of admissible cycles}\label{s:analyticindex}
Given a Hermitian connection $\nabla^\E$ on a Hilbert bundle $\E \rightarrow X$, let $\DiracE=1\hotimes_{\nabla^\E}\Dirac$ be the unbounded self-adjoint operator on $L^2(X^\circ,\E\hotimes S)$ obtained by coupling $\Dirac$ to $\E$ using $\nabla^\E$ (for self-adjointness of Dirac operators on complete Riemannian manifolds coupled to Hilbert bundles, cf. \cite[Proposition 1.16]{Ebert2016Index}).

\begin{lemma}
\label{l:TfiniteNorm}
Let $(\E,\nabla^\E)$ be a $T \times \Pi$-equivariant Hilbert bundle with Hermitian connection over $X$, where $\E$ is $T$-finite. Let $\chi \in C^\infty_c(X)^T$ and let $f \in C_0(\bR)$. Then $\|\chi f(\DiracE)_{[\lambda]}\|\rightarrow 0$ as $|\lambda|\rightarrow \infty$, $\lambda \in \Lambda$.
\end{lemma}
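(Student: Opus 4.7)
The plan is to reduce the lemma, by uniform approximation, to the case $f \in C_c(\bR)$, and then to deduce it from the lower bound
\[ \|\DiracE s\| \ge c|\lambda|\|s\| - C\|s\|, \qquad s \in L^2(X^\circ, \E\hotimes S)_{[\lambda]}, \quad \supp(s)\subset K:=\supp(\chi), \]
with constants $c, C>0$ independent of $\lambda$ and $s$.

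The key step in establishing this lower bound is the commutator identity
\[ \{c(\xi_X), \DiracE\} = -2\nabla_{\xi_X}^{\E\hotimes S} + B_\xi, \qquad \xi \in \t, \]
where $B_\xi = \sum_j c(e_j)c(\nabla_{e_j}^{TX}\xi_X)$ is zeroth-order. This is a routine calculation using $\{c(u), c(v)\} = -2\, u\cdot v$, $[\nabla_{e_j}, c(\xi_X)] = c(\nabla_{e_j}^{TX}\xi_X)$, and the fact that $\{c(\xi_X), \i\c_{\t^\perp}(\nu)\}=0$, since both are odd elements acting on different factors of the $\bZ_2$-graded tensor product $\bC l(TX)\hotimes \bC l(\t^\perp)$. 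Combining this with Definition~\ref{d:mmH} applied to $(\E\hotimes S, \nabla^{\E\hotimes S})$, on the $[\lambda]$-isotype one has
\[ \nabla_{\xi_X}^{\E\hotimes S} = 2\pi\i\bigl(\lambda(\xi) - \phi_{\E\hotimes S}(\xi)\bigr), \]
where $\phi_{\E\hotimes S}(\xi)$ is a bounded section of $\End(\E\hotimes S)$, bounded uniformly in $\xi$ with $|\xi|=1$; this is where the $T$-finiteness of $\E$ (and finite rank of $S$) is used essentially. The vector field $\xi_X$ is $\Pi$-invariant and $X/\Pi$ is compact, so $\|\xi_X\|_\infty$ and $\|B_\xi\|_\infty$ are bounded uniformly in $\xi$ with $|\xi|=1$. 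Choosing $\xi = \lambda/|\lambda|$ and chaining the two sides of
\[ 4\pi|\lambda|\|s\| - C_1\|s\| \le \|\{c(\xi_X), \DiracE\}s\| \le 2\|c(\xi_X)\|_\infty \|\DiracE s\| + C_2\|s\| \]
(the upper bound uses $\supp(s)\subset K$ together with boundedness of $[\DiracE, c(\xi_X)]$) gives the asserted estimate.

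To conclude, by uniform approximation it suffices to treat $f \in C_c(\bR)$ with $\supp(f)\subset [-M, M]$. For any $v \in L^2_{[\lambda]}$, set $w := f(\DiracE)v$; then $\|w\|\le \|f\|_\infty\|v\|$ and $\|\DiracE w\|\le M\|f\|_\infty\|v\|$ by the functional calculus. The section $s := \chi w$ lies in $L^2_{[\lambda]}$ and is supported in $K$, and $\DiracE s = \chi \DiracE w + c(\d\chi)w$, so $\|\DiracE s\|\le C_f\|v\|$. Applying the lower bound to $s$ yields $\|\chi w\| \le C_f/(c|\lambda| - C)\|v\|$, hence $\|\chi f(\DiracE)_{[\lambda]}\| = O(|\lambda|^{-1})\to 0$ as $|\lambda|\to\infty$.

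The only delicate part is tracking that all zero-order error terms (the constants $C_1$, $C_2$, the operator $B_\xi$, and the moment map $\phi_{\E\hotimes S}(\xi)$) remain uniformly bounded as $\lambda$ varies over $\Lambda$; this is precisely the role of the $T$-finiteness hypothesis, which ensures boundedness of $\phi_\E$ and thereby makes the whole estimate uniform in $\lambda$.
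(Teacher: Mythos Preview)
Your approach is genuinely different from the paper's and, once a gap is fixed, correct. The paper argues softly: it reduces to $f(s)=(\i+s)^{-1}$ and $\chi\in C^\infty_c(X^\circ)$, observes that $\chi f(\DiracE)$ lands in $L^{2,1}(K,\E\hotimes S)$, and then shows the norm of the inclusion $L^{2,1}(K,\E\hotimes S)_{[\lambda]}\hookrightarrow L^2$ tends to $0$ by Rellich for the finite-rank bundle $S$ combined with $T$-finiteness of $\E$ to handle the finitely many weight shifts. Your route instead extracts a quantitative lower bound $\|\DiracE s\|\ge c|\lambda|\,\|s\|-C\|s\|$ from the anticommutator identity, yielding an explicit $O(|\lambda|^{-1})$ rate. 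This is more hands-on but avoids the Rellich lemma and the delicate use of the potential term needed to push $\supp(\chi)$ into $X^\circ$.

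There is, however, a real gap in your upper-bound step. You invoke ``boundedness of $[\DiracE,\c(\xi_X)]$'' to get
\[
\|\{\c(\xi_X),\DiracE\}s\|\le 2\|\c(\xi_X)\|_\infty\|\DiracE s\|+C_2\|s\|,
\]
but neither the graded nor the ungraded commutator of $\DiracE$ with $\c(\xi_X)$ is zeroth order: the graded commutator is $-2\nabla_{\xi_X}+B_\xi$, and the ungraded one contains $\sum_j[\c(e_j),\c(\xi_X)]\nabla_{e_j}$. One could try to rescue the displayed inequality via a G\aa rding estimate $\|\nabla s\|\le C(\|\DiracE s\|+\|s\|)$, but this in turn relies on a Bochner formula whose zeroth-order term involves the curvature of $\nabla^\E$, which is nowhere assumed bounded on the infinite-rank bundle $\E$. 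The clean fix is to pair with $s$ instead of taking norms: using self-adjointness of $\DiracE$ and $\c(\xi_X)^*=-\c(\xi_X)$,
\[
\langle\{\c(\xi_X),\DiracE\}s,s\rangle
= -\langle\DiracE s,\c(\xi_X)s\rangle+\langle \c(\xi_X)s,\DiracE s\rangle
= 2\i\,\Im\langle \c(\xi_X)s,\DiracE s\rangle,
\]
so $|\langle\{\c(\xi_X),\DiracE\}s,s\rangle|\le 2\|\xi_X\|_{L^\infty(K)}\|s\|\,\|\DiracE s\|$. Combined with your lower bound $|\langle\{\c(\xi_X),\DiracE\}s,s\rangle|\ge (4\pi|\lambda|-C_1)\|s\|^2$ (from $-2\nabla_{\xi_X}=-4\pi\i(\lambda(\xi)-\phi_{\E\hotimes S}(\xi))$ on the isotype), this gives the desired estimate directly, with no appeal to ellipticity.

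One small imprecision: the boundedness of $\phi_{\E\hotimes S}(\xi)$ uses not only $T$-finiteness of $\E$ and finite rank of $S$, but also the restriction to the compact set $K$, since the moment map of the level-$\ell$ spinor bundle $\scr{S}$ is unbounded on $X$.
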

\begin{proof}
A function $f \in C_0(\bR)$ can be approximated uniformly in norm by functions of the form $g(s)(\i+s)^{-1}$, $g \in C_0(\bR)$. The properties of the potential term $c_{\t^\perp}(\nu)$ and in particular equation \eqref{e:muperpdecay} (see \cite[Section 4.7]{LSQuantLG} for detailed discussion) imply that the operator $\chi f(\DiracE)$ can be approximated in norm by operators of the form $\chi' f(\DiracE)$, where $\chi' \in C^\infty_c(X^\circ)^T$. Therefore we may reduce to the case where $f(s)=(\i+s)^{-1}$ and $\chi \in C^\infty_c(X^\circ)^T$. Then $f(\DiracE)$ is the resolvent of $\DiracE$, which has range contained in $L^{2,1}(X^\circ,\E\hotimes S)$ by ellipticity, and $\chi f(\DiracE)$ has range contained in $L^{2,1}(K,\E\hotimes S)$ where $K=\supp(\chi)\subset X^\circ$ is compact. Let
\[ \iota_\lambda^\E \colon L^{2,1}(K,\E\hotimes S)_{[\lambda]}\hookrightarrow L^2(K,\E\hotimes S)_{[\lambda]}, \qquad \iota_\lambda \colon L^{2,1}(K,S)_{[\lambda]}\hookrightarrow L^2(K,S)_{[\lambda]} \]
be the inclusions. Note that the norms 
\begin{equation}
\label{e:iotalambda}
\|\iota_\lambda\|\rightarrow 0 \quad  \text{as} \quad  |\lambda|\rightarrow \infty
\end{equation} 
by the Rellich lemma, because $\iota_\lambda$ are the isotypical components of the compact embedding $\iota \colon L^{2,1}(K,S)\hookrightarrow L^2(K,S)$. 

Likewise the $\iota^\E_\lambda$ are the isotypical components of an inclusion map $\iota^\E$, but unlike $\iota$, $\iota^\E$ need not be a compact embedding since $\E$ is allowed to have infinite rank. We claim that nevertheless the norm $\|\iota_\lambda^\E\| \rightarrow 0$ as $|\lambda|\rightarrow \infty$. Indeed we can argue locally, and we may use any connection, since the Sobolev space $L^{2,1}(K,\E\hotimes S)$ does not depend on the choice. Therefore assume $\E=X \times \E_0$ is the trivial $T$-equivariant Hilbert bundle with the trivial connection, and $\DiracE=\id_{\E_0}\hotimes \Dirac$. By $T$-finiteness, $\E_0$ has finitely many non-zero $T$-isotypical components $\E_{0,\lambda'}$, $\lambda' \in \Lambda'$. Then
\[ L^{2,1}(K,\E\hotimes S)_{[\lambda]}=\bigoplus_{\lambda'\in \Lambda'} \E_{0,\lambda'}\hotimes L^{2,1}(K,S)_{[\lambda-\lambda']} \]
and hence
\[ \|\iota_{\lambda}^\E\|=\sup_{\lambda' \in \Lambda'}\|\iota_{\lambda-\lambda'}\|.\]
The result follows from \eqref{e:iotalambda} and the finiteness of $\Lambda'$.
\end{proof}

A family of operators $Q$ as in Definition \ref{d:admissible} determines an odd essentially self-adjoint operator on $L^2(X^\circ,\E\hotimes S)$ with domain $C^\infty_c(X^\circ,\dom(Q_\bullet)\hotimes S)$. Its closure is a self-adjoint operator, also denoted $Q$ when there is no risk of confusion. 
\begin{proposition}
Let $(\E,\nabla^\E,Q)$ be an admissible cycle. Then the sum
\begin{equation}
\label{e:DiracQ}
\DiracQ=Q+\DiracE 
\end{equation}
is self-adjoint on $\dom(Q)\cap \dom(\DiracE)=\dom(Q)\cap L^{2,1}(X^\circ,\E\hotimes S)$.
\end{proposition}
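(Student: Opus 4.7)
The plan is to derive a Bochner-type identity $\DiracQ^2 = Q^2 + \DiracE^2 + B$ with $B$ bounded, from which self-adjointness with the stated domain follows via the standard cutoff argument on the complete manifold $(X^\circ, g)$. I would work throughout on the dense subspace $\D_0 := C^\infty_c(X^\circ, \dom(Q_\bullet)\hotimes S)$, which is well-defined and lies in $\dom(Q)\cap\dom(\DiracE)$ by Remark \ref{r:smoothness}.

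The key calculation is the anticommutator $\{Q, \DiracE\}$ on $\D_0$. Since $Q$ is an odd fibrewise endomorphism of $\E$ and Clifford multiplication by tangent vectors is odd on $S$, the graded-tensor-product convention forces $Q$ to anticommute (in the unsigned sense) with every such Clifford operator; in particular the potential term $\i\,\c_{\t^\perp}(\nu)\hotimes 1$ in $\DiracE$ contributes nothing to $\{Q,\DiracE\}$. For the remaining first-order part of $\DiracE$, writing it locally as $\sum_j \c(e_j)(\nabla^\E_{e_j}\hotimes 1 + 1\hotimes \nabla^{\scr{S}}_{e_j})$ and applying the Leibniz rule in $\DiracE(Qs)$, while moving $Q$ past $\c(e_j)$ in $Q\DiracE s$ with the same sign, collapses the sum to Clifford multiplication by the total $\nabla^\E$-covariant derivative of $Q$:
\[ \{Q, \DiracE\} \;=\; \sum_j \c(e_j)\,(\nabla^\E_{e_j}Q). \]
By condition (d) of Definition \ref{d:admissible}, the right-hand side extends to a bounded self-adjoint operator $B$ on $L^2(X^\circ,\E\hotimes S)$. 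For $s\in \D_0$ one therefore has
\[ \|\DiracQ s\|^2 \;=\; \|Qs\|^2 + \|\DiracE s\|^2 + \langle Bs,s\rangle, \]
so that the graph norm of $\DiracQ$ is equivalent, modulo $\|s\|^2$-terms, to the sum of the graph norms of $Q$ and $\DiracE$. In particular, if $\D_0$ is a core for $\DiracQ$, then the closure has domain exactly $\dom(Q)\cap\dom(\DiracE)$.

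The remaining step is essential self-adjointness of $\DiracQ|_{\D_0}$, which I would establish by a standard cutoff argument. By completeness of $g$ on $X^\circ$, choose $T\times\Pi$-invariant cutoffs $\chi_n\in C^\infty_c(X^\circ)$ with $\chi_n\to 1$ pointwise and $\|d\chi_n\|_\infty\to 0$. For $u\in \dom(\DiracQ^*)$, the sections $\chi_n u$ lie in $\dom(Q)\cap L^{2,1}$ because $\chi_n$ commutes with $Q$ and preserves $L^{2,1}$; one then smooths $\chi_n u$ by Friedrichs mollifiers applied in local trivializations $\E|_{B_{x_0}}\simeq B_{x_0}\times \E_{x_0}$ obtained by $\nabla^\E$-parallel transport, which preserves $\dom(Q_\bullet)$ by condition (d). The graded commutator $[\DiracQ,\chi_n] = \c(d\chi_n)$ is bounded with norm tending to zero, so the approximation converges to $u$ in the graph norm of $\DiracQ^*$. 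Hence $\dom(\DiracQ^*)$ is contained in the graph-norm closure of $\D_0$, which by the previous step equals $\dom(Q)\cap\dom(\DiracE)$, yielding self-adjointness on that domain.

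The main obstacle is the final mollification step: one must verify that parallel-transport-based mollifiers actually preserve both smoothness and the domain subbundle $\dom(Q_\bullet)$, and interact controllably with the graph norm of $\DiracQ$. Condition (d) of Definition \ref{d:admissible}, as unpacked in Remark \ref{r:smoothness}, is designed precisely to make this step work: it ensures that $\dom(Q_\bullet)$ is a $\nabla^\E$-parallel subbundle and that the covariant derivative of $Q$ is a bounded endomorphism, so that mollification, cutoff commutators, and the anticommutator computation are all controlled in $L^2$.
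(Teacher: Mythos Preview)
Your identification of the bounded graded commutator $[Q,\DiracE]=\sum_j \c(e_j)(\nabla^{\E}_{e_j}Q)$ as the key analytic input is exactly right, and the graph-norm equivalence you derive on $\D_0$ is correct. This is also the heart of the paper's proof. Where the arguments diverge is in the passage from ``bounded commutator'' to ``self-adjointness on $\dom(Q)\cap\dom(\DiracE)$''.

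Your cutoff/mollification sketch has a genuine gap, and it occurs earlier than the mollification step you flag. For $u\in\dom((\DiracQ)^*)$ you assert that $\chi_n u$ lies in $\dom(Q)\cap L^{2,1}$ ``because $\chi_n$ commutes with $Q$ and preserves $L^{2,1}$''. But those facts only help once you already know $u\in\dom(Q)$ and $u\in L^{2,1}$; membership in $\dom((\DiracQ)^*)$ gives no such a priori information about either summand separately. The usual Chernoff-type cutoff argument works for a single first-order operator on a complete manifold precisely because that operator is local; here $Q$ is a fibrewise unbounded operator with no locality, so multiplying by $\chi_n$ does not produce an element of its domain. The subsequent mollification step inherits the same problem.

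The paper sidesteps this by invoking an abstract result of Lesch--Mesland on self-adjointness of sums of self-adjoint operators with bounded graded commutator. The hypotheses checked are: $\D_0$ lies in the initial domain of $[Q,\DiracE]$; this commutator is bounded (your computation); and the resolvent $(Q\pm\i)^{-1}$ maps the core $C^\infty_c(X^\circ,\E\hotimes S)$ of $\DiracE$ back into $\D_0$ (a consequence of the smoothness of $(Q\pm\i)^{-1}$ noted in Remark~\ref{r:smoothness}). This last resolvent condition is the technical substitute for your mollification step, and the abstract theorem packages the delicate domain analysis that your direct argument would need to supply.
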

\begin{proof}
We will deduce this as a very special case of the abstract functional analytic result \cite[Theorem 1.1]{lesch2019sums}. Let
\[ \mf{D}(Q,\DiracE)=\{v \in \dom(Q)\cap \dom(\DiracE)\mid Q v \in \dom(\DiracE), \DiracE v\in \dom(Q)\} \subset L^2(X^\circ,\E\hotimes S) \]
be the initial domain of the graded commutator $[Q,\DiracE]$. Then according to \emph{loc. cit.}, and since $C^\infty_c(X^\circ,\E\hotimes S)$ is a core for $\DiracE$, it suffices to verify that: $C^\infty_c(X^\circ,\dom(Q_\bullet)\hotimes S)\subset \mf{D}(Q,\DiracE)$, the graded commutator $[Q,\DiracE]$ is bounded, and $(Q\pm \i)^{-1}(C_c^\infty(X^\circ,\E\hotimes S)) \subset C^\infty_c(X^\circ,\dom(Q_\bullet)\hotimes S)$.

By condition (c) in the definition of admissible cycles $Q(C^\infty_c(X^\circ,\dom(Q_\bullet))\subset C^\infty_c(X^\circ,\E)$, while by condition (d), $\nabla^\E(C^\infty_c(X^\circ,\dom(Q_\bullet)))\subset C^\infty_c(X^\circ,T^*X\otimes \dom(Q_\bullet))$. It follows that $C^\infty_c(X^\circ,\dom(Q_\bullet))\subset \mf{D}(Q,\DiracE)$. Boundedness of $[Q,\DiracE]$ is a consequence of boundedness of $[\nabla^\E,Q]$, which in turn follows from condition (d) in the definition. The resolvent $(Q\pm \i)^{-1}\colon \E\rightarrow \dom(Q_\bullet)$ is smooth as we noted in Remark \ref{r:smoothness}, hence $(Q\pm \i)^{-1}(C_c^\infty(X^\circ,\E\hotimes S)) \subset C^\infty_c(X^\circ,\dom(Q_\bullet)\hotimes S)$.
\end{proof}

\begin{theorem}
\label{t:compactresolvent}
Let $(\E,\nabla^\E,Q)$ be an admissible cycle, and let $\DiracQ=\DiracE+Q$ be the corresponding unbounded self-adjoint operator on $L^2(X^\circ,\E\hotimes S)$. For each $\lambda \in \Lambda$ the operator $\DiracQ_{[\lambda]}$ has compact resolvent.
\end{theorem}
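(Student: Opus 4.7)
The plan is to combine a coercive lower bound for $\DiracQ^2$ (which confines sections with bounded $\|\DiracQ u\|$ to a precompact subset of $X^\circ$ in the $[\lambda]$-isotype) with a spectral cutoff in $Q$ that tames the potentially infinite-rank fibres of $\E$, reducing the compact resolvent statement to the Rellich lemma for a finite-rank subbundle.

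First, expand $(\DiracQ)^2 = (\DiracE)^2 + Q^2 + [\DiracE,Q]_+$. The graded commutator is bounded by admissibility condition (d). Applying the Bochner--Lichnerowicz identity to $(\DiracE)^2$ with the decomposition $\Dirac = i c_{\t^\perp}(\nu)\hotimes 1 + \Dirac_{\scr{S}}$ yields the confining potential $|\nu|^2$ (the cross-term $\sum_j c(e_j)c_{\t^\perp}(\nabla_{e_j}\nu)$ is absorbed using \eqref{e:muperpdecay}), and on the $[\lambda]$-isotype the moment-map formula \eqref{e:mmH} together with the Spin$_c$ moment map of $\scr{S}$ produces a $\lambda$-shifted Kirwan-type potential. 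Since $\phi_\E$ is bounded by $T$-finiteness (Definition \ref{d:mmH}), $\phi$ is proper on $X^\circ$ by \eqref{e:Piequivphi}, and Proposition \ref{p:Kirwanpos} combined with Lemma \ref{l:distanceU} controls the Kirwan vector field off a neighborhood of its zero set, one obtains on the $[\lambda]$-isotype a coercive bound controlling $\|\nabla^{\E\hotimes S}u\|$, $\||\nu|u\|$, $\|Qu\|$, and the Kirwan potential, each by $\|\DiracQ u\|^2 + C\|u\|^2$. Together these localize the unit ball of the resolvent image on the $[\lambda]$-isotype to a precompact subset $K \subset X^\circ$, up to arbitrarily small $L^2$-error.

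It remains to establish relative compactness of $\mathcal{B}_K := \{u \in L^2(K,\E\hotimes S)_{[\lambda]} : \|\nabla^{\E\hotimes S}u\|^2 + \|Qu\|^2 + \|u\|^2 \leq R\}$ in $L^2(K,\E\hotimes S)$, for fixed compact $K\subset X^\circ$. For this I would use a smoothed spectral cutoff $P_N := \chi_N(Q^2)$, where $\chi_N \in C^\infty_c(\bR)$ is identically $1$ on $[0,N^2]$ and supported in $[-1, N^2+1]$. Smoothness of the resolvent of $Q$ (Remark \ref{r:smoothness}) makes $P_N$ a smooth section of $\mathrm{End}(\E)$, while fibrewise compactness of $(1+Q_x^2)^{-1}$ (condition (a)) makes $P_N(x)$ a finite-rank projection, of rank uniformly bounded on $K$ by upper-semicontinuity of the spectral counting function. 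The elementary spectral inequality $(1-\chi_N(t))^2 \leq t/N^2$ for $t\geq 0$ gives $\|u - P_N u\|_{L^2} \leq \|Qu\|_{L^2}/N$, so it suffices to establish compactness of $P_N(\mathcal{B}_K)$ in $L^2$ for each fixed $N$. Boundedness of $P_N$ and of $\nabla^\E P_N$ (from condition (d) via the functional calculus) shows that $P_N(\mathcal{B}_K)$ is bounded in $L^{2,1}(K,F_N\hotimes S)$, with $F_N := \mathrm{Range}(P_N)|_K$ a finite-rank measurable subbundle; Rellich's lemma on $F_N$ completes the proof.

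The main technical difficulty is bridging the Rellich-type compactness in the base (from ellipticity of $\DiracE$) with the pointwise compact resolvent of $Q$ (acting in the fibres), given that $\E$ may be infinite-rank. The smoothed spectral projection $P_N$ provides this bridge, and the smoothness and boundedness conditions (c) and (d) in Definition \ref{d:admissible} are precisely what is needed to ensure that $P_N$ preserves $L^{2,1}$-regularity with a finite-rank range bundle on compact sets.
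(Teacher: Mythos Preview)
Your first step has a genuine gap. The operator $\DiracQ=\DiracE+Q$ contains no Kirwan vector field term; the square $(\DiracQ)^2=(\DiracE)^2+Q^2+[\DiracE,Q]$ gives, via Bochner--Lichnerowicz, the rough Laplacian plus the confining potential $|\nu|^2$ (which handles the direction toward $\partial X$) plus \emph{bounded} curvature and commutator terms. There is no term in this expansion that grows in the $\Pi$-direction of $X$. Your appeal to Proposition~\ref{p:Kirwanpos} and Lemma~\ref{l:distanceU} is misplaced: those results control the Kirwan vector field $\kappa$, and $\kappa$ only enters the analysis through the \emph{deformed} operator $\DiracQ_s=\DiracQ+s\bar{\Phi}$ introduced later in \eqref{e:DiracQs}, not through $\DiracQ$ itself. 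The moment-map identity \eqref{e:mmH} does relate $\nabla^\E_{\xi_X}$ to $\lambda-\phi_\E$ on the $[\lambda]$-isotype, but turning this into a coercive pointwise potential requires dividing by $|\xi_X|$, which vanishes on $T$-fixed points; making such an argument rigorous is a substantial project you have not carried out, and it is not what your citations establish.

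The paper's mechanism for confinement in the $\Pi$-direction is entirely different and does not pass through a potential at all. After reducing (via the factorization $(1+Q^2+\DiracEs)^{-1}=b(1+Q^2)^{-1/4}(1+\DiracEs)^{-1/4}b'$) to showing that $a(1+\DiracEs)^{-1/4}_{[\lambda]}$ is compact for any bounded $T$-invariant section $a$ of $\scr{K}(\E\hotimes S)$, one approximates $(1+s^2)^{-1/4}$ by Schwartz functions $h$ with compactly supported Fourier transform, so that $h(\DiracE)$ has \emph{finite propagation}. Then $ah(\DiracE)=\sum_{\eta\in\Pi}\chi_\eta a h(\DiracE)$ for a $\Pi$-periodic partition of unity, each summand is compact by Rellich, and the key point is that the sum converges \emph{in norm} on the $[\lambda]$-isotype: conjugating by lifts $\wh{\eta}\in\wh{\Pi}^{(\ell)}$ and using the commutation relation \eqref{e:commutation} shifts the isotype by $\ell\eta$, after which Lemma~\ref{l:TfiniteNorm} gives $\|\chi h(\DiracE)_{[\lambda+\ell\eta]}\|\to 0$. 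It is the level $\ell>0$ together with $T$-finiteness of $\E$, not any coercive potential, that produces decay in the $\Pi$-direction.

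A secondary issue: in your spectral-cutoff step, the range of $P_N=\chi_N(Q^2)$ need not form a vector bundle (the fibrewise rank can jump), so invoking ``Rellich's lemma on $F_N$'' is not immediate. This is repairable, but the first gap is not, as written.
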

\begin{proof}
We must show that $(1+\DiracQs)^{-1}_{[\lambda]}$ is compact. Since
\[ (1+Q^2+\DiracEs)^{-1}-(1+\DiracQs)^{-1}=(1+\DiracQs)^{-1}[Q,\DiracE](1+Q^2+\DiracEs)^{-1} \]
and $[Q,\DiracE]$ is bounded, it suffices to show that $(1+Q^2+\DiracEs)^{-1}$ is compact. By positivity (see \cite[Proposition 1.4.5]{PedersenBook}; we learned of this trick from \cite{MohsenWitten}) there is a factorization
\[ (1+Q^2+\DiracEs)^{-1}=b(1+Q^2)^{-1/4}(1+\DiracEs)^{-1/4}b' \]
for some bounded operators $b,b'$. The operator $(1+Q^2)^{-1/4}$ is a bounded $T$-invariant section of the bundle of compact operators $\scr{K}(\E \hotimes S)$. The result therefore follows if we prove the following: for any continuous bounded $T$-invariant section $a$ of $\scr{K}(\E\hotimes S)$, the operator $a(1+\DiracEs)_{[\lambda]}^{-1/4}$ on $L^2(X^\circ,\E\hotimes S)_{[\lambda]}$ is compact.

The function $s \in \bR \mapsto (1+s^2)^{-1/4} \in (0,\infty)$ is a uniform limit of Schwartz functions having compactly supported Fourier transform. Since the compact operators form a closed ideal, it suffices to show that $ah(\DiracE)_{[\lambda]}$ is compact for any Schwartz function $h \in C^\infty(\bR)$ having compactly support Fourier transform. For such functions $h(\DiracE)$ is an operator of finite propagation (cf. \cite{HigsonRoe}): if $s \in C^\infty_c(X^\circ,\E\hotimes S)$ then $h(\DiracE)s$ has support contained in a neighborhood of $\supp(s)$ of size $r$, where $\supp(\hat{h})\subset [-r,r]$.

Let $\chi_\t \in C^\infty_c(\t)$ be a compactly supported bump function such that $\{\eta\cdot \chi_\t|\eta \in \Pi\}$ is a partition of unity. Let $\chi=\mu^*\chi_\t$ be the pullback. The set $\{\chi_\eta=\eta\cdot \chi|\eta \in \Pi\}$ is a partition of unity on $X$, and therefore
\begin{equation} 
\label{e:hDirac}
ah(\DiracE)=\sum_{\eta \in \Pi} \chi_\eta ah(\DiracE),
\end{equation}
the sum converging in the weak topology. Using the properness of $(\mu,\nu)\colon X^\circ \rightarrow \t \times \t^\perp$, the properties of the potential term $\c_{\t^\perp}(\nu)$ (in particular \eqref{e:muperpdecay}; see \cite[Section 4.7]{LSQuantLG} for details), and the compactness of $a$ on the fibres, the Rellich lemma implies that each of the summands in \eqref{e:hDirac} is compact. Thus the result follows if we show that \eqref{e:hDirac} converges in norm when restricted to the $\lambda$-isotypical subspace $L^2(X^\circ,\E\hotimes S)_{[\lambda]}$. Since a finite sum of compact operators is compact, it is enough to prove this for the sum over a finite index sublattice $\Pi'\subset \Pi$. Since $h(\DiracE)$ has finite propagation, we may choose $\Pi'$ such that the operators $\chi_\eta ah(\DiracE)$ have disjoint supports for $\eta \in \Pi'$. Then proving convergence of the sum in norm on $L^2(X^\circ,\E\hotimes S)_{[\lambda]}$ is the same as proving that $\chi_\eta\big(ah(\DiracE)\big)_{[\lambda]}\rightarrow 0$ in norm as $|\eta| \rightarrow \infty$. Choose lifts $\wh{\eta} \in \wh{\Pi}^{(\ell)}$ and let $a_\eta=\wh{\eta}^{-1} a \wh{\eta}$. Then using $\wh{\Pi}^{(\ell)}$-equivariance of $\DiracE$,
\[ \Big(\chi_\eta ah(\DiracE)\Big)_{[\lambda]}=\Big(\wh{\eta}\big(\chi a_\eta h(\DiracE)\big)\wh{\eta}^{-1}\Big)_{[\lambda]}=\wh{\eta}\Big(\chi a_\eta h(\DiracE)\Big)_{[\lambda+\ell \eta]}\wh{\eta}^{-1} \]
where in the second equality we used the commutation relation \eqref{e:commutation}. The norm of the operator in the last expression is at most
\[ \|a\|_\infty\cdot \|\chi h(\DiracE)_{[\lambda+\ell\eta]}\|. \]
Since $\ell>0$, $|\lambda+\ell \eta| \rightarrow \infty$ as $|\eta|\rightarrow \infty$. By Lemma \ref{l:TfiniteNorm}, $\|\chi h(\DiracE)_{[\lambda+\ell\eta]}\| \xrightarrow{|\eta|\rightarrow \infty}0$.
\end{proof}
In particular $\DiracQ_{[\lambda]}$ is Fredholm, allowing us to make the following definition.
\begin{definition}
The \emph{equivariant analytic index} of the admissible cycle $(\E,\nabla^\E,Q)$ is the element of $R^{-\infty}(T)$ given by
\[ \index_T(\DiracQ)=\sum_{\lambda \in \Lambda} \index(\DiracQ_{[\lambda]})\cdot e^\lambda.\]
\end{definition}

\subsection{Non-abelian localization}
In Section \ref{s:Kirwanvf} we introduced the Spin$_c$ Kirwan vector field $\kappa$ and its bounded version $\bkappa$ associated to $\phi$, $\bar{\phi}=\phi(1+\phi^2)^{-1/2}$ respectively. In Section \ref{s:transversal} we introduced a map $\nu \colon X^\circ \rightarrow \t^\perp$ with the property that $(\mu,\nu)\colon X^\circ \rightarrow \t \times \t^\perp$ is a proper map. Let $\DiracQ=\DiracE+Q$ be the operator on $L^2(X^\circ,\E\hotimes S)$ associated to an admissible cycle $(\E,\nabla^\E,Q)$ as in Section \ref{s:analyticindex}. We will study the deformation
\begin{equation} 
\label{e:DiracQs}
\DiracQ_s=\DiracQ+s\bar{\Phi}, \qquad \bar{\Phi}=\i(\c_{\t^\perp}(\nu)\hotimes 1-1\hotimes \c(\bkappa)),
\end{equation}
where $s\in \bR$ is a parameter. Related deformations have been studied extensively in various contexts by Tian-Zhang \cite{TianZhang}, Ma-Zhang \cite{MaZhangVergneConj}, Braverman \cite{Braverman2002} and many others. 
\begin{lemma}[\cite{LSWittenDef}]
\label{l:wittendef1}
Let $(\E,\nabla^\E,Q)$ be an admissible cycle. The square $\DiracQs_s$ is given by
\[ \DiracQs_s=\DiracQs+s^2\bar{\Phi}^2+s[\DiracE,\bar{\Phi}]\]
where $\bar{\Phi}^2=\nu^2+\bar{\kappa}^2$,
\[ [\DiracE,\bar{\Phi}]=2\nu^2+4\pi(\phi+\phi_\E)\cdot \bar{\phi}+\i [\DiracE,\c_{\t^\perp}(\nu)]-\i\sum_{j=1}^{\dim(\t)} 2\bar{\phi}_j\c(\nabla_\bullet \xi^j_X)+\c(\d \bar{\phi}_j)\c(\xi^j_X)-2\bar{\phi}_j\L_{\xi^j}, \]
and $\xi^1,...,\xi^{\dim(\t)}$ is a basis of $\t$. For each $\lambda \in \Lambda$ there is a proper and bounded below function $f_\lambda$ on $X^\circ$ such that the operator inequality $[\DiracE,\bar{\Phi}]_{[\lambda]}\ge f_\lambda$ holds. For each $s\ge 0$ the operator $(\DiracQs_s)_{[\lambda]}$ is Fredholm, and its index does not depend on $s$.
\end{lemma}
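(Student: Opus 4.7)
The plan is to expand $\DiracQs_s=(\DiracQ+s\bar{\Phi})^2$ using the graded-commutator conventions, then isolate each piece and analyse it on the $\lambda$-isotypical subspace. Since $Q$ is an odd endomorphism of $\E$ and $\bar{\Phi}$ acts purely on the graded tensor factor $S=\scr{S}_{\t^\perp}\hotimes\scr{S}$, the graded commutator $[Q,\bar{\Phi}]$ vanishes. Writing $\DiracQ^2 = \DiracEs+Q^2+[\DiracE,Q]$ and expanding the square of $\DiracQ+s\bar{\Phi}$ then immediately gives
\[
\DiracQs_s=\DiracQs+s^2\bar{\Phi}^2+s[\DiracE,\bar{\Phi}].
\]
For $\bar{\Phi}^2$, set $A=\c_{\t^\perp}(\nu)\hotimes 1$ and $B=1\hotimes \c(\bkappa)$; the Clifford identities give $A^2=-|\nu|^2$, $B^2=-|\bkappa|^2$, and $AB+BA=0$ (graded commutation of odd operators on different factors), yielding $\bar{\Phi}^2=-(A-B)^2=\nu^2+\bkappa^2$.

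The hard part is the explicit formula for $[\DiracE,\bar{\Phi}]$. I would decompose $\DiracE=\i\c_{\t^\perp}(\nu)\hotimes 1+\DiracE_{\scr{S}}$ following Definition \ref{d:virtualfund}. The self-commutator $[\i\c_{\t^\perp}(\nu),\i\c_{\t^\perp}(\nu)]=2\nu^2$ is immediate, and the cross-term $[\c_{\t^\perp}(\nu)\hotimes 1,\,1\hotimes\c(\bkappa)]$ vanishes by graded commutativity. The genuinely nontrivial piece is $[\DiracE_{\scr{S}},\,\c(\bkappa)]$. Writing $\bkappa=\sum_j \bar\phi_j\,\xi^j_X$ and applying the standard Kostant-type identity $[\Dirac_{\scr{S}},\c(\xi_X)]=-2\L_\xi-2\c(\nabla_\bullet\xi_X)+4\pi\,\pair{\phi}{\xi}$ (adapted to the connection on $\scr{S}$ and twisted by $\E$ via $\phi_\E$, so $\L_\xi^{\E\otimes\scr{S}}$ produces the sum $\phi+\phi_\E$), together with the Leibniz rule to account for the $\bar\phi_j$ coefficients, yields precisely the tabulated expression, including the $\c(\d\bar\phi_j)\c(\xi^j_X)$ contribution from differentiating the coefficient and the $4\pi(\phi+\phi_\E)\cdot\bar{\phi}$ term from the moment maps. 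The residual $\i[\DiracE,\c_{\t^\perp}(\nu)]$ term collects the mixed pieces in which $\DiracE_{\scr{S}}$ hits the potential.

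Next I restrict to the $\lambda$-isotypical subspace. Here $\L_{\xi^j}$ acts as the scalar $2\pi\i\lambda^j$, so the combination $4\pi\phi\cdot\bar\phi - 2\i\bar\phi_j\cdot 2\pi\i\lambda^j$ appearing in $[\DiracE,\bar{\Phi}]_{[\lambda]}$ equals $4\pi\bar{\phi}\cdot(\phi-\lambda)=4\pi(|\phi|^2-\phi\cdot\lambda)/\sqrt{1+|\phi|^2}$, a proper function on $X$ (using properness of $\phi$, Section \ref{s:transversal}), bounded below by something like $|\phi|-|\lambda|-O(1)$. The term $4\pi\phi_\E\cdot\bar\phi$ is bounded by $T$-finiteness of $\E$ (Definition \ref{d:mmH}) together with boundedness of $\bar\phi$. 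The term $\i[\DiracE,\c_{\t^\perp}(\nu)]$ is dominated by $\nu^2$ via the decay assumption \eqref{e:muperpdecay}: $|\d\nu|=o(1+|\nu|^2)$ lets one absorb this into $s^2\nu^2$ (or even into $\nu^2$) up to a bounded error. The remaining $\c(\nabla_\bullet\xi^j_X)$ and $\c(\d\bar\phi_j)\c(\xi^j_X)$ pieces are bounded since $X/\Pi$ is compact and $\bar\phi$ is bounded. Assembling these estimates produces the desired proper bounded-below lower bound $f_\lambda$.

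Finally, for Fredholmness and constancy of index, I would argue as follows. From the decomposition $\DiracQs_s=\DiracQs+s^2\bar{\Phi}^2+s[\DiracE,\bar{\Phi}]$, the added terms on the $\lambda$-isotypical space are bounded below by $s^2\bar{\Phi}^2\ge 0$ plus $sf_\lambda$; combined with Theorem \ref{t:compactresolvent} (compact resolvent of $\DiracQ_{[\lambda]}$) and standard relative-boundedness arguments (the bounded remainder is relatively bounded with relative bound zero), $(\DiracQs_s)_{[\lambda]}$ retains compact resolvent, so $(\DiracQ_s)_{[\lambda]}$ is Fredholm. The map $s\mapsto \DiracQ_s$ is a norm-continuous family of self-adjoint operators with common core, and $s\mapsto \DiracQ_s(1+\DiracQ_s^2)^{-1/2}$ restricted to the $\lambda$-isotypical subspace is a norm-continuous path of bounded self-adjoint Fredholm operators; homotopy invariance of the index therefore gives the $s$-independence. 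The main obstacle I anticipate is the bookkeeping in the Kostant-type commutator computation: correctly tracking graded signs, the interplay between $\L_\xi$, $\nabla^\E_{\xi_X}$, $\phi$ and $\phi_\E$, and the contribution of the non-constant coefficient $\bar\phi_j$, so that the identity in the statement is reproduced exactly.
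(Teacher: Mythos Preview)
Your proposal is correct and follows essentially the same approach as the paper: the paper itself does not give a proof of this lemma but refers to \cite[Section 4]{LSWittenDef}, noting only that the argument there (for the special case $\E$ finite rank and $Q=0$) carries over verbatim and that one may take $f_\lambda=f-c_\lambda$ with $c_\lambda$ the constant produced by the Lie derivative term on the $\lambda$-isotypical component. Your outline---expanding the square, using $[Q,\bar\Phi]=0$, computing the Kostant-type commutator $[\DiracE_{\scr{S}},\c(\bkappa)]$ via the Leibniz rule, then bounding the cross-term below on each isotypical piece using properness of $(\phi,\nu)$ and $T$-finiteness of $\phi_\E$---is exactly the argument of that reference.
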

The proof is essentially the same as the proof of a similar result in \cite[Section 4]{LSWittenDef} for the special case where $\E$ is a finite dimensional $T\times \Pi$-equivariant vector bundle and $Q=0$, and so we will not repeat it here. One can take $f_\lambda=f-c_\lambda$ where $c_\lambda$ is a constant only depending on $\lambda$ (its appearance comes from the Lie derivative term in the expression for the cross-term $[\DiracE,\bar{\Phi}]$). The lemma suggests that as $s \rightarrow \infty$, the term that is quadratic in $s$ dominates and the kernel of $\DiracQ_s$ localizes near the subset $Z \subset X$ where $\bar{\Phi}$ vanishes, the fact that $f_\lambda$ is proper and bounded below meaning that the cross-term is under control. Making this statement precise involves some analysis to which the rest of this section is devoted. 

For each $\beta_* \in \B_*$ let 
\[ U'_{\beta_*} \subsetneq U_{\beta_*}\subset \U_{\beta_*} \cap (|\nu|)^{-1}[0,1) \]
be $T$-invariant open neighborhoods of $Z_{\beta_*}$, where $\U_{\beta_*}$ is the neighborhood of $\Z_{\beta_*}$ introduced in Section \ref{s:Kirwanvf}. We may assume the closure $\ol{U}_{\beta_*}$ is a smooth manifold with boundary. For $\beta=\beta_*+\eta \in \B$ where $\eta \in \Pi$, let $U_\beta=\eta\cdot U_{\beta_*}$, $U'_\beta=\eta\cdot U'_{\beta_*}$, and put
\[ U=\bigcup_{\beta \in \B} U_\beta, \qquad U'=\bigcup_{\beta \in \B} U_\beta'.\]
By Proposition \ref{p:Kirwanpos} we may arrange that $U,U'$ have the property that 
\[ \kappa^2(x)+\nu^2(x)>c^2>0 \qquad \text{for} \qquad x \in U\backslash U'. \]
Let $\rho^{1/2}\colon X\rightarrow [0,1]$ be a smooth $T$-invariant function equal to $1$ on $U'$ and satisfying $\rho(U)\subset (0,1]$, $\supp(\rho)=\ol{U}$, $\|\d \rho\|_\infty<\infty$ (the latter property can be arranged by Proposition \ref{p:weakperiodicity}). Define operators
\begin{equation} 
\label{e:DiracQU}
\DiracQ_{U,s}=\DiracQ_U+s\rho^{-1}\Phi, \qquad \DiracQ_U=\rho^{1/2}\DiracE \rho^{1/2}+Q
\end{equation}
where
\begin{equation}
\label{e:Phi}
\Phi=\i(\c_{\t^\perp}(\nu)\hotimes 1-1\hotimes \c(\kappa)).
\end{equation}
Note that $\Phi$ differs from $\bar{\Phi}$ in that $\bar{\kappa}$ has been replaced with $\kappa$. We regard $\DiracQ_U$ as an unbounded operator in the Hilbert space $L^2(U,\E\hotimes S|_U)$ where the measure is the restriction of the Riemannian measure on $X$ (although $U$ is not complete, $\DiracQ_U$ is still essentially self-adjoint thanks to the bump function $\rho^{1/2}$).
\begin{lemma}
\label{l:wittendef2}
Let $(\E,\nabla^\E,Q)$ be an admissible cycle. The square $\DiracQs_{U,s}$ is given by
\[ \DiracQs_{U,s}=\DiracQs_U+s^2\rho^{-2}\Phi^2-s\rho^{-1}\c(\d \rho)\Phi+s[\DiracE,\Phi],\]
where $\Phi^2=\nu^2+\kappa^2$,
\[ [\DiracE,\Phi]=2\nu^2+4\pi\phi^2+4\pi\phi_\E\cdot \phi+\i [\DiracE,\c_{\t^\perp}(\nu)]-\i\sum_{j=1}^{\dim(\t)} 2\phi_j\c(\nabla_\bullet \xi^j_U)+\c(\d\phi_j)\c(\xi^j_U)-2\phi_j\L_{\xi^j}, \]
and $\xi^1,...,\xi^{\dim(\t)}$ is a basis of $\t$. There is a constant $s_0$ such that for $s>s_0$ and for each $\lambda \in \Lambda$ there is a proper and bounded below function $f_{U,\lambda}$ on $U$ such that there is an operator inequality
\[ s\rho^{-2}\Phi^2-\rho^{-1}\c(\d \rho)\Phi+[\DiracE,\Phi]_{[\lambda]}\ge (s-s_0)\rho^{-2}\Phi^2+f_{U,\lambda}.\]
For $s>s_0$ the operator $(\DiracQ_{U,s})_{[\lambda]}$ is Fredholm and its index does not depend on $s$.
\end{lemma}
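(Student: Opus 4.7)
\emph{Proof proposal for Lemma~\ref{l:wittendef2}.}

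The strategy is to follow the template of Lemma~\ref{l:wittendef1}, tracking the extra terms produced by the damping factor $\rho^{1/2}$ and the unbounded moment map $\phi$ in place of $\bar{\phi}$. First I would compute $\DiracQs_{U,s}$ by directly squaring the sum $\DiracQ_{U,s}=\DiracQ_U+s\rho^{-1}\Phi$. Since both $Q$ and $\Phi$ are odd but act on distinct tensor factors of $\E\hotimes S$, the anticommutator $\{Q,\rho^{-1}\Phi\}$ vanishes. The remaining cross term expands as
\[
\{\rho^{1/2}\DiracE\rho^{1/2},\rho^{-1}\Phi\}
= (\DiracE-\tfrac{1}{2}\rho^{-1}\c(\d\rho))\Phi+\Phi(\DiracE+\tfrac{1}{2}\rho^{-1}\c(\d\rho))
= [\DiracE,\Phi]-\rho^{-1}\c(\d\rho)\Phi,
\]
where the cancellation in the cross term uses $\{\c(\d\rho),\Phi\}=-2\pair{\kappa}{\d\rho}=-2\L_{\kappa}\rho=0$ since $\rho$ is $T$-invariant and $\kappa=\phi(x)_X$ is tangent to the orbits. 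Finally, $(\rho^{-1}\Phi)^2=\rho^{-2}(\nu^2+\kappa^2)$ since the two summands of $\Phi$ anticommute in the graded tensor product structure.

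For $[\DiracE,\Phi]$ itself the calculation is identical to that in Lemma~\ref{l:wittendef1}: expand $\Phi=\i\c_{\t^\perp}(\nu)-\i\c(\kappa)$, apply the Lichnerowicz-type identity for $[\DiracE,\c(\kappa)]$ with $\kappa=\sum_j\phi_j \xi^j_X$, and use $\d\pair{\phi}{\xi}=-\iota(\xi_X)\varpi$ together with the defining relation $2\pi\i\pair{\phi_\E}{\xi}=\L^\E_\xi-\nabla^\E_{\xi_X}$ to recognize the moment map factors $4\pi\phi^2$ and $4\pi\phi_\E\cdot\phi$. The only change compared to Lemma~\ref{l:wittendef1} is that $\bar{\phi}$ is replaced everywhere by $\phi$, producing the stated formula.

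The central step is the operator inequality. The new feature is the cross term $-\rho^{-1}\c(\d\rho)\Phi$. Since $\c(\d\rho)$ and $\Phi$ anticommute, the self-adjoint operator $\c(\d\rho)\Phi$ satisfies $(\c(\d\rho)\Phi)^2=|\d\rho|^2\Phi^2$, hence $\pm \c(\d\rho)\Phi \le |\d\rho|\,|\Phi|$ in the operator sense. Applying the scalar inequality $|\d\rho|\cdot\rho^{-1}|\Phi|\le \tfrac{1}{2}|\d\rho|^2+\tfrac{1}{2}\rho^{-2}\Phi^2$ yields
\[
-\rho^{-1}\c(\d\rho)\Phi \;\ge\; -\tfrac12\rho^{-2}\Phi^2-\tfrac12|\d\rho|^2,
\]
and the function $|\d\rho|^2$ is globally bounded by construction. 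Restricted to the $[\lambda]$-isotypical component, the Lie derivative term $-2\phi_j\L_{\xi^j}$ contributes a real linear function of $\phi$ with coefficients of order $|\lambda|$, while $\phi_\E\cdot\phi$ is linear in $\phi$ with bounded coefficient by $T$-finiteness of $\E$. Completing the square against $4\pi\phi^2$ absorbs these linear contributions at the cost of a $\lambda$-dependent constant $C_\lambda$, giving $[\DiracE,\Phi]_{[\lambda]}\ge 2\nu^2+2\pi\phi^2-C_\lambda'$ up to bounded contributions from $\i[\DiracE,\c_{\t^\perp}(\nu)]$ and the curvature-type terms $\c(\d\phi_j)\c(\xi^j_X)$, etc., all of which are bounded on $U$. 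The resulting $f_{U,\lambda}=2\nu^2+2\pi\phi^2-C_\lambda''$ is bounded below and proper on $U$: properness in the $\Pi$-direction follows from the $\Pi$-equivariance relation \eqref{e:Piequivphi}, while properness toward $\partial U$ follows from $\nu^2$. Choosing $s_0=1/2$ then yields the claimed bound.

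Given the lower bound $\DiracQs_{U,s}|_{[\lambda]}\ge \DiracQs_U|_{[\lambda]}+s(s-s_0)\rho^{-2}\Phi^2+s\cdot f_{U,\lambda}$ for $s>s_0$, a standard Rellich argument—identical in structure to the one used in the proof of Theorem~\ref{t:compactresolvent}, combining the ellipticity of $\DiracE$ off $Z$ with the proper confining potential on the $[\lambda]$-isotypical piece—shows that $(\DiracQ_{U,s})_{[\lambda]}$ has compact resolvent, hence is Fredholm. Independence of the index for $s>s_0$ follows from norm-continuity of the bounded transforms $F_{s,\lambda}=(\DiracQ_{U,s})_{[\lambda]}\bigl(1+(\DiracQ_{U,s})^2_{[\lambda]}\bigr)^{-1/2}$ in $s$, which is a consequence of the operator bound together with the smoothness of the coefficients. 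The main obstacle is really getting the cross-term bound sharp enough that the residual $(s-s_0)\rho^{-2}\Phi^2$ on the right-hand side retains strictly positive coefficient; all other steps are direct adaptations of the $\bar{\Phi}$-version.
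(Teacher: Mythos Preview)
Your computation of the square and of $[\DiracE,\Phi]$ is correct and in fact more explicit than the paper's. The treatment of the cross term via AM--GM is also fine and gives a cleaner constant than the paper's splitting. The gap is in the properness claim for $f_{U,\lambda}$.

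You take $f_{U,\lambda}=2\nu^2+2\pi\phi^2-C_\lambda''$ and assert that ``properness toward $\partial U$ follows from $\nu^2$''. This is false: each $U_\beta$ has compact closure $\ol U_\beta$ sitting well inside $X^\circ$ (recall $U_\beta\subset\U_\beta\cap|\nu|^{-1}[0,1)$), so both $\nu$ and $\phi$ are bounded continuous functions on $\ol U_\beta$. As $x\to\partial U_\beta$ your $f_{U,\lambda}$ stays bounded, hence it is not proper on $U$. You have confused $\partial U$ with $\partial X$; it is only near $\partial X$ that $|\nu|\to\infty$. Your argument does correctly handle escape in the $\Pi$-direction via $\phi^2$, but that is a different mode of non-compactness.

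What is actually needed is a term that blows up at $\partial U$, and the only candidate available is $\rho^{-2}\Phi^2$, since $\rho\to 0$ at $\partial U$ by construction. The paper reserves a piece $\tfrac12 s_0\rho^{-2}\Phi^2$ inside $f_{U,\lambda}$ for exactly this purpose. But for $\rho^{-2}\Phi^2$ to blow up one must also know that $\Phi^2=\nu^2+\kappa^2$ is bounded below by a positive constant near $\partial U$; this is precisely the content of Proposition~\ref{p:Kirwanpos} (via the choice of $U'\subset U$ so that $|\Phi|>c>0$ on $U\setminus U'$), which you never invoke. Your AM--GM estimate can be salvaged by splitting off an additional $\tfrac12\rho^{-2}\Phi^2$ (taking $s_0=1$) and absorbing it into $f_{U,\lambda}$, but you must then cite the lower bound on $|\Phi|$ to conclude properness. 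Without that input from Proposition~\ref{p:Kirwanpos} the argument does not close, and this is also what ultimately drives the compact-resolvent/Fredholm claim, since $\DiracQ_U=\rho^{1/2}\DiracE\rho^{1/2}+Q$ is degenerate at $\partial U$.
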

\begin{proof}
The formula for the square $\DiracQs_{U,s}$ involves a calculation similar to that in Lemma \ref{l:wittendef1}. Consider the expression for the graded commutator $[\DiracE,\Phi]$. The terms involving $[\DiracE,\c_{\t^\perp}(\nu)]$, $\c(\d\phi_j)$ are bounded on $U$ and unimportant. The total Lie derivative $\L_{\xi_j}$ becomes bounded after restricting to the $\lambda$-isotypical component. The dominant term in the expression is therefore $4\pi \phi^2$, as all the other terms are at most linear in $\phi$. It follows that there is a proper and bounded below function $f'_\lambda$ on $X$ such that the operator inequality $[\DiracE,\Phi]_{[\lambda]}\ge f'_\lambda$ holds. Therefore
\begin{equation} 
\label{e:localizedop1}
s\rho^{-2}\Phi^2-\rho^{-1}\c(\d \rho)\Phi+[\DiracE,\Phi]_{[\lambda]}\ge s\rho^{-2}\Phi^2-\rho^{-1}|\d \rho|\cdot|\Phi|+f'_\lambda.
\end{equation}
For any $s_0>0$ we may rearrange the right hand side of \eqref{e:localizedop1} as
\[ (s-s_0)\rho^{-2}\Phi^2+(\tfrac{1}{2}s_0\rho^{-2}\Phi^2+f_\lambda')+(\tfrac{1}{2}s_0\rho^{-1}|\Phi|-|\d \rho|)\rho^{-1}|\Phi|. \]
Note that $\rho^{-1}(x)\ge 1$ for all $x \in U$. On the other hand $\d \rho|_{U'}=0$ and $|\Phi(x)|>c>0$ for $x \in U\backslash U'$. Therefore choosing $\frac{1}{2}s_0>c^{-1}\|\d \rho\|_\infty$ ensures $(\frac{1}{2}s_0\rho^{-1}|\Phi|-|\d \rho|)\ge 0$. Dropping this term \eqref{e:localizedop1} becomes
\[ s\rho^{-2}\Phi^2-\rho^{-1}\c(\d \rho)\Phi+[\DiracE,\Phi]_{[\lambda]}\ge (s-s_0)\rho^{-2}\Phi^2+(\tfrac{1}{2}s_0\rho^{-2}\Phi^2+f'_\lambda).\]
The function $f_{U,\lambda}=\frac{1}{2}s_0\rho^{-2}\Phi^2+f'_\lambda$ is proper and bounded below on $U$, because $\Phi^2>c^2>0$ on $U \backslash U'$, $\rho^{-2}(x)\rightarrow \infty$ as $x \rightarrow \partial U$, and $f'_\lambda$ is proper and bounded below on the closure of $U$. This proves the desired inequality. The remaining statements follow from the inequality.
\end{proof}

\begin{theorem}
\label{t:nonabelloc1}
Let $(\E,\nabla^\E,Q)$ be an admissible cycle. For $s$ sufficiently large $\index_T(\DiracQ)=\index_T(\DiracQ_{U,s})$.
\end{theorem}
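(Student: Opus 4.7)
The plan is to combine the $s$-independence of the two indices (Lemmas \ref{l:wittendef1} and \ref{l:wittendef2}) with a homotopy argument connecting the operators themselves. Since $\index_T(\DiracQ) = \index_T(\DiracQ_s)$ for all $s \ge 0$ and $\index_T(\DiracQ_{U,s})$ is constant for $s > s_0$, it suffices to produce, for a single sufficiently large $s$, a continuous path of Fredholm operators (on each $T$-isotypical component) from $\DiracQ_s$ to $\DiracQ_{U,s}$.

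First I would reinterpret $\DiracQ_{U,s}$ as an operator on $L^2(X^\circ, \E \hotimes S)$. Extending $\rho$ by zero outside $U$ makes $\rho^{1/2} \DiracE \rho^{1/2}$ well-defined on all of $X^\circ$. The potential $s\rho^{-1}\Phi$, which is singular at $\partial U$, can then be replaced outside $U$ by an auxiliary confining odd self-adjoint operator whose square on $L^2(X^\circ\setminus U,\E\hotimes S)_{[\lambda]}$ is bounded below by $cs^2$ (for instance a constant multiple of a Clifford generator). Call the result $\widetilde{\DiracQ}_{U,s}$. Since the added piece has no kernel on each isotypical component for $s$ large, $\index_T(\widetilde{\DiracQ}_{U,s}) = \index_T(\DiracQ_{U,s})$.

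Second I would construct an interpolation $\DiracQ_{t,s}$, $t \in [0,1]$, with $\DiracQ_{0,s} = \DiracQ_s$ and $\DiracQ_{1,s} = \widetilde{\DiracQ}_{U,s}$. A natural choice is
\[
\DiracQ_{t,s} = \rho_t^{1/2}\,\DiracE\,\rho_t^{1/2} + Q + s\,\Phi_t,
\qquad \rho_t = (1-t) + t\rho,\qquad \Phi_t = (1-t)\bar{\Phi} + t\,\rho^{-1}\Phi,
\]
with the extension outside $U$ chosen compatibly with step one. Imitating the square calculations of Lemmas \ref{l:wittendef1} and \ref{l:wittendef2}, the key estimate would be to produce a threshold $s_1$, independent of $t$, and for each $\lambda\in\Lambda$ a proper, bounded-below function $f_\lambda^{(t)}$ on $X^\circ$ such that
\[
(\DiracQs_{t,s})_{[\lambda]} \;\ge\; (s - s_1)\,\rho_t^{-2}\,\Phi_t^2 + f_\lambda^{(t)}
\]
uniformly in $t$. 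This would make $\{(\DiracQ_{t,s})_{[\lambda]}\}_{t\in[0,1]}$ a norm-resolvent continuous family of Fredholm operators, so its index is constant in $t$; summing over $\lambda$ gives the theorem.

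The main obstacle is the uniform-in-$t$ lower bound. The confining mechanisms at the two endpoints are qualitatively different: a bounded potential $s^2\bar{\Phi}^2$ stabilized by the Kirwan vector field at $t=0$ versus a singular $s^2\rho^{-2}\Phi^2$ blowing up at $\partial U$ at $t=1$. Along the path one must simultaneously control the cross terms $[\DiracE,\Phi_t]$ and the new term $\c(\d\rho_t)\Phi_t$ produced by commuting $\DiracE$ through the square root of $\rho_t$. The remedy should be a combined use of Proposition \ref{p:Kirwanpos} (lower bound on $|\kappa|$ on the complement of $\U$), the growth properties of $\phi$ and $\nu$, and the argument at the end of Lemma \ref{l:wittendef2} for absorbing $\c(\d\rho_t)\Phi_t$ into $\frac{1}{2}s_0\rho_t^{-2}\Phi_t^2$. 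Once this uniform estimate is in hand, homotopy invariance finishes the proof.
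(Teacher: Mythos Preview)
Your homotopy scheme has a genuine gap at $t=0$. For every $t>0$ the potential $\Phi_t=(1-t)\bar\Phi+t\rho^{-1}\Phi$ blows up along $\partial U$, so any $L^2$ element in the domain of $\DiracQ_{t,s}$ must vanish along $\partial U$; the operator then splits as a direct sum of an operator on $L^2(U)$ and whatever auxiliary operator you put on $L^2(X^\circ\setminus U)$. At $t=0$ the potential $\bar\Phi$ is bounded and imposes no such condition, so the domain is the ordinary graph domain of $\DiracQ_s$ on $L^2(X^\circ)$. The domains therefore jump at $t=0$ and the family is \emph{not} norm--resolvent continuous there. This is not the kind of defect that a sharper lower bound can cure: it is exactly the phenomenon that makes comparing an operator on $X^\circ$ with one on the proper open subset $U$ delicate. (There is a secondary issue as well: on $X^\circ\setminus U$ you still have the unbounded fibrewise operator $Q$, and ``a constant multiple of a Clifford generator'' does not obviously dominate it, nor does it give compact resolvent there; so even defining $\widetilde\DiracQ_{U,s}$ with the stated properties needs more care.)

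The paper sidesteps the whole difficulty by the Bismut--Lebeau $2\times2$ trick: rather than interpolate, one works on the direct sum $\H\oplus\H_U^{\op}$, forms the single operator
\[
D_s=\begin{pmatrix}\DiracQ_s & s\gamma m_\chi\\ s m_\chi^*\gamma & \DiracQ_{U,s}\end{pmatrix},
\]
with $\chi$ a bump function supported where $\rho=1$, and shows directly that $(1+D_s^2)_{[\lambda]}^{-1}\to0$ as $s\to\infty$ using the explicit block inverse and the estimates of Lemmas~\ref{l:wittendef1} and \ref{l:wittendef2}. Since $D_s$ is a bounded perturbation of $\DiracQ_s\oplus\DiracQ_{U,s}$, its index is the difference of the two indices, and invertibility for large $s$ forces that difference to vanish. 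No interpolation between the two Hilbert spaces is needed. If you want to repair your approach, you would have to regularize the barrier (e.g.\ replace $\rho^{-1}$ by something finite) and then take a further limit, which ends up being at least as much work as the block argument.
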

\begin{proof}
The difference of the indices equals the index of the block diagonal operator $\DiracQ_s\oplus \DiracQ_{U,s}$ acting on $\H\oplus \H_U^{\op}$, where $\H=L^2(X^\circ,\E\hotimes S)$, $\H_U=L^2(U,\E\hotimes S|_{U})$ and $\H_U^{\op}$ denotes $\H_U$ equipped with the opposite $\bZ_2$-grading. Let $\chi \colon X \rightarrow [0,1]$ be a smooth $T$-invariant bump function equal to $1$ on a neighborhood of $Z$ and such that $\supp(\chi)\subset \rho^{-1}(1)$ (hence $\rho \chi=\chi$) and $\|\d \chi\|_{\infty}<\infty$ (possible because of Proposition \ref{p:weakperiodicity}). Let $m_\chi$ denote multiplication by $\chi$ followed by restriction to $U$, viewed as an odd operator $\H \rightarrow \H_U^{\tn{op}}$. Its adjoint $m_\chi^*\colon \H_U^{\tn{op}}\rightarrow \H$ is multiplication by $\chi|_U$ followed by extension to $X^\circ$ by $0$. Let $\gamma$ be the grading operator for $\H$. Define an odd unbounded self-adjoint operator on $\H\oplus \H_U^{\op}$:
\[ D_s=\matr{\DiracQ_s}{s\gamma m_\chi}{sm_\chi^* \gamma}{\DiracQ_{U,s}}. \]
Fix $\lambda \in \Lambda$. The operator $(D_s)_{[\lambda]}$ is a bounded perturbation of the compact resolvent operator $(\DiracQ_s\oplus \DiracQ_{U,s})_{[\lambda]}$ and therefore has the same index. We claim that for $s \gg 0$, $(D_s)_{[\lambda]}$ does not contain $0$ in its spectrum and hence has index $0$. In fact we prove the stronger result that $(1+D_s^2)_{[\lambda]}^{-1} \rightarrow 0$ in norm as $s \rightarrow \infty$.

Using $[\DiracQ,\chi]=\c(\d\chi)$, $\gamma \DiracQ=-\DiracQ \gamma$ and $\chi \rho=\chi$, one finds
\begin{equation} 
\label{e:2by2}
1+(D_s^2)_{[\lambda]}=\matr{1+(\DiracQs_s)_{[\lambda]}+s^2\chi^2}{-s\gamma \c(\d\chi)}{s\c(\d\chi)\gamma}{1+(\DiracQs_{U,s})_{[\lambda]}+s^2\chi^2}=:\matr{w_s}{x_s}{y_s}{z_s}. 
\end{equation}
Note that $w_s$, $z_s$ are invertible for all $s$. By Lemma \ref{l:wittendef1},
\begin{equation} 
\label{e:ws}
s^{-1}w_s\ge s(\Phi^2+\chi^2)+f_{\lambda},
\end{equation}
for a function $f_{\lambda}$ that is proper and bounded below. Since $\Phi^2+\chi^2>0$ everywhere and $f_{\lambda}$ is proper and bounded below, the infimum of $s(\Phi^2+\chi^2)+f_{\lambda}$ goes to infinity as $s \rightarrow \infty$. Hence by taking inverses, \eqref{e:ws} shows that $\|w_s^{-1}\|=o(s^{-1})$ as $s \rightarrow \infty$. By Lemma \ref{l:wittendef2},
\begin{equation}
\label{e:ws2}
s^{-1}z_s\ge (s-2s_0)(\rho^{-2}\Phi^2+\chi^2)+f_{U,\lambda}
\end{equation}
for a function $f_{U,\lambda}$ that is proper and bounded below on $U$. Since $\Phi^2+\chi^2>0$ everywhere and $f_{U,\lambda}$ is proper and bounded below on $U$, the infimum of $(s-2s_0)(\rho^{-2}\Phi^2+\chi^2)+f_{U,\lambda}$ goes to infinity as $s \rightarrow \infty$. Hence by taking inverses, \eqref{e:ws2} shows that $\|z_s^{-1}\|=o(s^{-1})$ as $s \rightarrow \infty$. On the other hand $\|x_s\|$, $\|y_s\|$ are both $O(s)$ because $\|\d \chi\|_\infty<\infty$.

Define
\[ q_s=w_s-x_sz_s^{-1}y_s=w_s(1-w_s^{-1}x_sz_s^{-1}y_s).\]
By the order estimates above, $\|w_s^{-1}x_sz_s^{-1}y_s\|\rightarrow 0$ as $s \rightarrow \infty$. Hence for $s\gg 0$, $q_s$ is invertible and moreover $\|q_s^{-1}\|=o(s^{-1})$. This in turn means we can apply the following explicit formula for the inverse of a block $2\times 2$ matrix:
\begin{equation} 
\label{e:inverse2by2}
(1+D_s^2)^{-1}_{[\lambda]}=\matr{w_s}{x_s}{y_s}{z_s}^{-1}=\matr{q_s^{-1}}{q_s^{-1}x_sz_s^{-1}}{z_s^{-1}y_sq_s^{-1}}{z_s^{-1}+z_s^{-1}y_sq_s^{-1}x_sz_s^{-1}}.
\end{equation}
By the order estimates above, each entry converges to $0$ in norm as $s \rightarrow \infty$.
\end{proof}
\begin{remark}
An analytic localization argument employing the explicit formula \eqref{e:inverse2by2} for the inverse of a block $2\times 2$ matrix appeared in the work of Bismut and Lebeau \cite[Chapter IX]{BismutLebeau}.
\end{remark}

Since $U=\sqcup_{\beta \in \B} U_\beta$, the theorem yields an expression for $\index_T(\DiracQ)$ as a sum of contributions from components $Z_\beta$, $\beta \in \B$ of $Z$.
\begin{corollary}
\label{c:analyticnonabelian}
Let $(\E,\nabla^\E,Q)$ be an admissible cycle. For each $\beta \in \B$, let $\DiracQ_{U_\beta,s}$ be the restriction of the operator $\DiracQ_{U,s}$ to $U_\beta \subset U$. For $s\gg 0$ the equation
\[ \index_T(\DiracQ)=\sum_{\beta \in \B}\index_T(\DiracQ_{U_\beta,s}) \]
holds in $R^{-\infty}(T)$.
\end{corollary}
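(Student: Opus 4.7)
The plan is to deduce the corollary directly from Theorem \ref{t:nonabelloc1} by showing that the localized operator $\DiracQ_{U,s}$ splits as an orthogonal direct sum over $\beta \in \B$. First I would verify that the $U_\beta$ are pairwise disjoint: by the choices of the $\U_{\beta_*}$ just before Lemma \ref{l:wittendef2}, the closures of the images $\phi(\U_\beta)$ are pairwise disjoint, hence so are the $\U_\beta$ themselves, and therefore so are the subsets $U_\beta \subset \U_\beta$. Thus $U = \bigsqcup_{\beta \in \B} U_\beta$ with each $U_\beta$ clopen in $U$, and we get an orthogonal Hilbert space decomposition
$$L^2(U, \E \hotimes S|_U) = \bigoplus_{\beta \in \B} L^2(U_\beta, \E \hotimes S|_{U_\beta}).$$
The operator $\DiracQ_{U,s}$ defined in \eqref{e:DiracQU} is a sum of a differential operator and a pointwise multiplication operator, hence local, so it preserves this decomposition: $\DiracQ_{U,s} = \bigoplus_{\beta \in \B} \DiracQ_{U_\beta,s}$.

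Next I would verify that each $(\DiracQ_{U_\beta,s})_{[\lambda]}$ is Fredholm for $s > s_0$, so that $\index_T(\DiracQ_{U_\beta,s}) \in R^{-\infty}(T)$ is itself well-defined. The operator inequality of Lemma \ref{l:wittendef2} is pointwise on $U$ and therefore restricts to the clopen subset $U_\beta$, with the bounding function $f_{U,\lambda}|_{U_\beta}$ still proper and bounded below on $U_\beta$. The functional-analytic consequences drawn in the lemma therefore apply verbatim with $U$ replaced by $U_\beta$.

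Finally, since $(\DiracQ_{U,s})_{[\lambda]}$ is Fredholm and decomposes orthogonally, its kernel and cokernel are finite-dimensional and split as direct sums over $\beta$. Only finitely many summands can be nonzero at each weight $\lambda$, yielding
$$\index(\DiracQ_{U,s})_{[\lambda]} = \sum_{\beta \in \B} \index(\DiracQ_{U_\beta,s})_{[\lambda]}$$
as a finite sum at each weight. Summing over $\lambda$ gives convergence of $\sum_\beta \index_T(\DiracQ_{U_\beta,s})$ in the product topology on $R^{-\infty}(T) = \bZ^\Lambda$, with limit $\index_T(\DiracQ_{U,s})$. Taking $s$ large enough for both Theorem \ref{t:nonabelloc1} and Lemma \ref{l:wittendef2} to apply, the identity $\index_T(\DiracQ) = \index_T(\DiracQ_{U,s})$ supplied by Theorem \ref{t:nonabelloc1} then yields the corollary. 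The only subtle point — local finiteness of the sum in $R^{-\infty}(T)$ — is handled precisely by the Fredholmness of the isotypical components, which is the content of the analytic work already carried out.
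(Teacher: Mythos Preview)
Your proposal is correct and follows exactly the approach the paper takes: the paper simply notes ``Since $U=\sqcup_{\beta \in \B} U_\beta$, the theorem yields an expression for $\index_T(\DiracQ)$ as a sum of contributions from components $Z_\beta$'' and states the corollary without further proof. You have faithfully expanded this one-line justification into its natural details (disjointness of the $U_\beta$, the resulting orthogonal Hilbert space decomposition, locality of the operator, and Fredholmness of each summand at fixed weight).
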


\subsection{Non-abelian localization and transversely elliptic symbols}\label{s:nonabellocII}
The contributions $\index_T(\DiracQ_{U_\beta,s})$, $\beta \in \B$ to the index formula proved in Corollary \ref{c:analyticnonabelian} are conveniently described in terms of indices of transversely elliptic operators \cite{AtiyahTransEll}.
\begin{definition}
\label{d:symbol}
Let $\sigma \colon T^*X^\circ\rightarrow \End(S)$ be the product symbol
\[ \sigma(x,p)=1 \hotimes \i\c(p)+\i \c_{\t^\perp}(\nu(x))\hotimes 1, \qquad (x,p) \in T^*X.\]
Using the Kirwan vector field, define the deformed symbol
\[ \sigma_\kappa(x,p)=1 \hotimes \i\c(p-\kappa(x))+\i \c_{\t^\perp}(\nu(x))\hotimes 1, \qquad (x,p) \in T^*X^\circ\]
and let $\sigma_\beta=\sigma_{\kappa}\upharpoonright T^*U_\beta$.
\end{definition}
If $H$ is a compact Lie group acting on a manifold $M$, one says that an $H$-invariant symbol $\sigma$ is $H$-\emph{transversely elliptic} (\cite{AtiyahTransEll}) if $\sigma\upharpoonright T^*_H M$ is invertible outside a compact subset, where $T^*_H M \subset T^*M$ is the conormal space to the $H$ orbits.
\begin{proposition}
The symbol $\sigma_\beta$ is $T$-transversely elliptic.
\end{proposition}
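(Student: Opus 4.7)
The plan is to compute the square of the symbol and identify its non-invertibility locus, then check this locus is compact after restriction to the conormal bundle of the $T$-orbits.

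First I would observe that the two Clifford actions $\c$ on $TX^\circ$ and $\c_{\t^\perp}$ on $\t^\perp$ act on the graded tensor factors of $S = \scr{S}_{\t^\perp} \wh{\boxtimes} \scr{S}$, so they graded-commute inside $\End(S)$. Combined with the Clifford identities $\c(v)^2 = -|v|^2$ and $\c_{\t^\perp}(w)^2 = -|w|^2$, this yields
\[ \sigma_\kappa(x,p)^2 = -\bigl(|p-\kappa(x)|^2 + |\nu(x)|^2\bigr)\cdot \id_S. \]
Hence $\sigma_\kappa(x,p)$ fails to be invertible if and only if $p = \kappa(x)$ and $\nu(x) = 0$.

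Next I would analyze the restriction to $T^*_T U_\beta$, i.e.\ the locus where $p \in T^*_x U_\beta$ satisfies $\pair{p}{\xi_X(x)} = 0$ for all $\xi \in \t$. By definition, $\kappa(x) = \phi(x)_X(x)$ is the value at $x$ of the vector field generated by $\phi(x) \in \t$, so (identifying $T^*X \simeq TX$ via the metric $g_X$) the covector dual to $\kappa(x)$ lies in the annihilator of $T$-conormals. Therefore, if $p = \kappa(x)$ and $p$ is conormal to the $T$-orbit, then $\pair{p}{\kappa(x)} = |\kappa(x)|^2 = 0$, forcing $\kappa(x) = 0$ and $p = 0$. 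Combined with the condition $\nu(x) = 0$ from the previous step, the non-invertibility locus of $\sigma_\beta$ inside $T^*_T U_\beta$ is precisely the zero section over $U_\beta \cap Z$.

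Finally I would check compactness of $U_\beta \cap Z$. Recall $Z = \kappa^{-1}(0) \cap \nu^{-1}(0) = \bigsqcup_{\beta' \in \B} Z_{\beta'}$, with $Z_{\beta'} \subset \phi^{-1}(\beta')$. Since the neighborhoods $\U_{\beta'}$ from Section \ref{s:Kirwanvf} were chosen with pair-wise disjoint closures of their $\phi$-images, and since $U_\beta \subset \U_\beta$, we have $U_\beta \cap Z_{\beta'} = \emptyset$ for $\beta' \ne \beta$. Thus $U_\beta \cap Z = Z_\beta$, which is compact by properness of $\phi$. This shows $\sigma_\beta$ is $T$-transversely elliptic with non-invertibility set contained in the zero section over $Z_\beta$.

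The argument is almost entirely bookkeeping once the key observation is in place; the only subtle point is the geometric reason why $p = \kappa(x)$ on the $T$-conormal directions forces $p = 0$, namely that $\kappa(x)$ is itself tangent to the $T$-orbit through $x$. Everything else follows from the careful choice of $U_\beta$ made in Section \ref{s:Kirwanvf} together with compactness of $Z_\beta$.
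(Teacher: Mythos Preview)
Your proof is correct and follows essentially the same approach as the paper: identify the non-invertibility locus as the graph of $\kappa$ over $\nu^{-1}(0)$, then use that $\kappa$ is tangent to the $T$-orbits to conclude that its intersection with $T^*_TU_\beta$ is the zero section over $Z_\beta$, which is compact. The paper is terser (it states the non-invertibility locus without computing the square and asserts $U_\beta\cap Z=Z_\beta$ directly), and uses the complete metric $g$ rather than $g_X$ for the identification $TX^\circ\simeq T^*X^\circ$, but neither difference is substantive.
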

\begin{proof}
Use $g$ to identify $TX^\circ=T^*X^\circ$. Let $\Gamma(\kappa)\subset TX^\circ$ be the graph of $\kappa$ (restricted to $X^\circ$), and let $\pi\colon T^*X^\circ \rightarrow X^\circ$ be the projection. The subset of $T^*U_\beta$ where $\sigma_\beta$ fails to be invertible is $T^*U_\beta\cap \Gamma(\kappa)\cap \pi^{-1}(\nu^{-1}(0))$. As $\kappa$ is tangent to the group orbit directions, the intersection of $\Gamma(\kappa)$ with $T^*_TX^\circ$ is the vanishing locus of $\kappa$, viewed as a subset of the $0$ section. Thus
\[ T^*_TU_\beta \cap \Gamma(\kappa)\cap \pi^{-1}(\nu^{-1}(0))=Z_\beta \]
and is compact.
\end{proof}
Invertibility outside a compact implies that an $H$-transversely elliptic symbol $\sigma$ defines an element in the \emph{compactly supported} K-theory group $K_H^c(T^*_HM)$; we denote this K-theory class by $\sigma$ as well when there is no risk of confusion. On a compact $H$-manifold Atiyah \cite{AtiyahTransEll} proved that the $H$-equivariant index of an $H$-transversely elliptic operator makes sense as a distribution on $H$, and depends only on the K-theory class defined by its symbol. Based on this Atiyah defines the \emph{analytic index} of any class $\sigma \in K^c_H(T^*_HM)$, denoted $\index_H(\sigma) \in R^{-\infty}(H)$, as the index of any transversely elliptic operator with symbol $\sigma$. More generally, on a non-compact manifold (such as $U_\beta$) Atiyah defines the index by embedding in a compact manifold and extending the symbol appropriately. 

Recall that $K^c_H(T^*_HM)$ is a $K_H(M)$-module, and we denote the module action with an $\otimes$ symbol. Given $\sigma \in K^c_H(T^*_HM)$ and $\sf{E}\in K_H(M)$, represent $\sf{E}$ by a difference of vector bundles $E_1,E_2$ near the projection to $M$ of the \emph{compact} subset where $\sigma$ fails to be invertible, and extend $E_1,E_2$ arbitrarily to $M$. Then $\sigma \otimes \sf{E}$ is the difference of the K-theory classes represented by the symbols $\sigma \otimes \id_{E_1}$, $\sigma\otimes \id_{E_2}$.

\begin{theorem}
\label{t:nonabelloc}
Let $\sf{E}\in K^{\ad}_T(\M)$ be an admissible class whose pullback to $X$ is represented by the admissible cycle $(\E,\nabla^\E,Q)$. Let $\sf{E}_\beta \in K_T(U_\beta)$ be the pullback of $\sf{E}$ to $U_\beta$. Then
\begin{equation} 
\label{e:Kthynonabelloc}
\index_T(\DiracQ)=\sum_{\beta \in \B} \index_T(\sigma_\beta \otimes \sf{E}_\beta).  
\end{equation}
In particular the index depends only on the K-theory class $\sf{E}$.
\end{theorem}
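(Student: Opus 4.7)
The plan is to start from Corollary \ref{c:analyticnonabelian}, which for $s \gg 0$ reduces the theorem to showing, for each $\beta \in \B$ separately, that
\[ \index_T(\DiracQ_{U_\beta,s}) = \index_T(\sigma_\beta \otimes \sf{E}_\beta). \]
The right-hand side is an Atiyah transversely elliptic index in the sense of \cite{AtiyahTransEll}; it is well-defined because the non-invertibility locus $Z_\beta$ of $\sigma_\beta$ on $T^*_T U_\beta$ is compact. So everything localizes near $Z_\beta$, and the task is purely one of matching the analytic index of an operator built on the infinite-rank bundle $\E\hotimes S$ with Atiyah's $K$-theoretic index.

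The central observation is that $\DiracQ_{U_\beta,s} = \rho^{1/2}\DiracE\rho^{1/2} + Q + s\rho^{-1}\Phi$ is, after rescaling cotangent vectors by $s^{-1}$ and absorbing the $0$th-order term $s\rho^{-1}\Phi$ into the symbol, an operator whose leading symbol is exactly $1\hotimes \i\c(p-\kappa(x)) + \i\c_{\t^\perp}(\nu(x))\hotimes 1$ acting on $\E \hotimes S$, with $Q$ providing a fiberwise Fredholm family. This symbol represents $\sigma_\beta \otimes \sf{E}_\beta$ in $K^c_T(T^*_T U_\beta)$ by the very definition of the $K_T(U_\beta)$-module action on transversely elliptic symbols, since the pair $(\E, F)$ with $F=Q(1+Q^2)^{-1/2}$ represents $\sf{E}_\beta$ on $U_\beta$.

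To match the analytic index with Atiyah's index, I would reduce to a finite-rank coefficient bundle. Over a compact neighborhood $K$ of $Z_\beta$, the pair $(\E,F)$ is an honest equivariant Kasparov cycle, and standard stabilization arguments (cf.\ \cite{HigsonPrimer}) yield a finite-rank virtual $T$-equivariant bundle $[V_+]-[V_-]$ on $K$ representing $\sf{E}_\beta|_K$. Next, interpolate between $(\E,\nabla^\E,Q)$ and the finite-rank cycle determined by $V_\pm$ through a smooth one-parameter family of admissible cycles, extended trivially outside $K$. Along this family, uniform versions of Lemmas \ref{l:wittendef1}--\ref{l:wittendef2} ensure Fredholmness and index invariance via the same block $2\times 2$ resolvent estimate used in the proof of Theorem \ref{t:nonabelloc1}. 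At the finite-rank endpoint, $\DiracQ_{U_\beta,s}$ is a standard transversely elliptic Dirac-type operator coupled to $V_\pm$, and its index equals $\index_T(\sigma_\beta\otimes[V_+]) - \index_T(\sigma_\beta\otimes[V_-]) = \index_T(\sigma_\beta \otimes \sf{E}_\beta)$ by the definition of Atiyah's analytic index.

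The main obstacle is executing the stabilization and homotopy rigorously within the class of admissible cycles: $\dom(Q_\bullet)$ need not be $\Pi$-invariant, and conditions (c)--(d) of Definition \ref{d:admissible} (notably boundedness of the $\nabla^\E$-derivative of $Q$ and the smoothness of the resolvents) must be preserved along the interpolation. The cleanest route is probably to perform the finite-rank reduction and a linear homotopy at the bounded Kasparov-cycle level $(\E,F)$ over $K$, then lift back to unbounded form; alternatively, one can avoid strict homotopy altogether by comparing indices directly via a Mayer--Vietoris/excision argument in the compactly supported transversely elliptic $K$-theory of $T^*_T U_\beta$. Either way, once the identity holds $\beta$-by-$\beta$, summing over $\B$ gives \eqref{e:Kthynonabelloc}, and the well-definedness of $\index_T(\DiracQ)$ as a function of $\sf{E}$ alone follows a posteriori.
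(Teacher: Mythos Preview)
Your reduction via Corollary \ref{c:analyticnonabelian} to the local identity $\index_T(\DiracQ_{U_\beta,s})=\index_T(\sigma_\beta\otimes\sf{E}_\beta)$ is exactly the paper's first step. The difference lies in how the local identity is established.

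The paper does not carry out a finite-rank reduction or homotopy of admissible cycles. Instead, it observes that $\DiracQ_{U_\beta,s}$ is a Dirac-type operator on $U_\beta$ perturbed by the $0$th-order potential $s\rho^{-1}\Phi$, whose vanishing locus is the compact set $Z_\beta$, and then simply invokes existing results (Braverman \cite{Braverman2002}, Ma--Zhang \cite{MaZhangBravermanIndex}, \cite{LSWittenDef}, \cite{LRSkkbrav}) identifying the analytic index of such a Witten-deformed operator with the Atiyah index of the associated transversely elliptic symbol. So the paper's proof is essentially a one-line citation once the localization to $U_\beta$ is in hand.

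Your proposal is more explicit and, in a sense, more honest about a genuine subtlety: the cited references are written for finite-rank coefficient bundles, whereas here $\E$ has infinite rank with $Q$ supplying the fibrewise Fredholm structure. Your finite-rank reduction over a compact neighborhood of $Z_\beta$ is a natural way to close that gap. The obstacles you flag (preserving conditions (c)--(d) along the homotopy, non-$\Pi$-invariance of $\dom(Q_\bullet)$) are real but local to $U_\beta$, where $\Pi$-equivariance plays no role; working at the bounded Kasparov level over the compact set and appealing to stability of the transversely elliptic index under operator homotopy, as you suggest, is the cleanest route and avoids having to track the unbounded admissibility conditions. The excision alternative you mention is also viable and closer in spirit to how \cite{AtiyahTransEll} defines the index on non-compact manifolds in the first place.
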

\begin{proof}
By Corollary \ref{c:analyticnonabelian} one needs to show $\index_T(\DiracQ_{U_\beta,s})=\index_T(\sigma_\beta \otimes \sf{E}_\beta)$. Recall $U_\beta$ is a small neighborhood of the compact set $Z_\beta=X^\beta \cap \phi^{-1}(\beta)\cap \nu^{-1}(0)$ where the $0$-th order deformation $\rho^{-1}\Phi|_{U_\beta}=\i \rho^{-1}(\c_{\t^\perp}(\nu)\hotimes 1-1\hotimes \c(\kappa))|_{U_\beta}$ vanishes. Since $Z_\beta$ is compact, the result follows from known results relating $0$-th order deformations of this type to indices of transversely elliptic symbols: details and various approaches may be found in \cite{Braverman2002, MaZhangBravermanIndex, LSWittenDef, LRSkkbrav}.
\end{proof}
In view of Theorem \ref{t:nonabelloc}, we can make the following definition.
\begin{definition}
\label{d:analyticindex}
The \emph{equivariant analytic index} map is the $R(T)$-module map
\[ \index_T \colon K^\ad_T(\M)\rightarrow R^{-\infty}(T), \qquad \sf{E}\mapsto \index_T(\DiracQ),\]
where $(\E,\nabla^\E,Q)$ is any admissible cycle representing the pullback of $\sf{E}$ to $X$.
\end{definition}

In Theorem \ref{t:nonabelloc}, the contribution labelled by $\beta \in \B$ can be expressed in terms of objects defined on the fixed-point set $(U_\beta)^\beta$. The element $\beta$ determines a complex structure on the normal bundle $\nu(\beta)$ to $(U_\beta)^\beta$ with the property that the eigenvalues of the action of $\i^{-1}\beta$ on $\nu(\beta)^{1,0}$ are positive. There is a Clifford module $S(\beta)$ on $(U_\beta)^\beta$ with the property 
\[ S(\beta) \hotimes \wedge \nu(\beta)^{0,1}\simeq S|_{(U_\beta)^\beta},\] 
and a corresponding induced transversely elliptic symbol $\sigma_{S(\beta)}$. Then
\begin{equation}
\label{e:NonAbSym}
\index_T(\sigma_\beta \otimes \sf{E}_\beta)=\index_T(\sigma_{S(\beta)}\otimes \Sym(\nu(\beta)^{1,0})\otimes \sf{E}_\beta),
\end{equation}
where $\Sym(\nu(\beta)^{1,0})$ denotes the symmetric algebra bundle. See \cite{ParadanRiemannRoch, WittenNonAbelian} and \cite[Theorem 6.14]{LSWittenDef} for further details.

The local finiteness of the sum (in $\bZ^\Lambda=R^{-\infty}(T)$) in Theorem \ref{t:nonabelloc} can be verified directly using \eqref{e:NonAbSym}. Because of the $\Sym(\nu(\beta)^{1,0})$ factor, the contribution of $0\ne \beta \in \B$ has support contained in a half space of the form $H(\beta,c_\beta)=\{\xi \in \t^*|\xi\cdot \beta/|\beta|\ge c_\beta\}$. We claim that the constants $c_\beta \rightarrow \infty$ as $|\beta|\rightarrow \infty$, from which the result follows. The constant $c_\beta$ depends on the eigenvalues for the action of $\i^{-1}\beta/|\beta|$ on $S(\beta)$ and $\E|_{Z_\beta}$. If $\E=\bC$, then the claim is true and follows from our assumption $\ell>0$, equation \eqref{e:commutation}, and Proposition \ref{p:weakperiodicity}, which imply that the eigenvalues for the action of $\i^{-1}\beta/|\beta|$ on $S(\beta)$ go to $+\infty$ as $|\beta|\rightarrow \infty$; see for example \cite{LSWittenDef} for further details. In the general case the result is still true because of $T$-finiteness: the eigenvalues for the action of $\beta/|\beta|$ on $\E|_{Z_\beta}$ are bounded by a constant independent of $\beta$.

\subsection{Weyl symmetry}
Definition \ref{d:admissible} has an obvious $N(T)$-equivariant analogue where we require $(\E,\nabla^\E)$ to be $N(T)\ltimes \Pi$-equivariant and $Q$ to be $N(T)$-equivariant; we take this as the definition of the admissible classes $K^\ad_{N(T)}(X)\subset K_{N(T)}(X)$. Let $K_G^\ad(\M)$ be the $R(G)$-subalgebra of classes whose pullback to $X$ lies in $K^\ad_{N(T)}(X)$. The index of a class in $K_G^\ad(\M)$ has an additional antisymmetry under the action of the Weyl group.
\begin{proposition} 
\label{p:Weylanti}
If $\sf{E} \in K^\ad_G(\M)$ then $\index_T(\sf{E})$ is $W$-antisymmetric.
\end{proposition}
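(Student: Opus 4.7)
The plan is to argue directly that any lift $\dot w\in N(T)$ of $w\in W$ intertwines $\DiracQ$ up to a sign $(-1)^{l(w)}$ coming from its parity on the $\bZ_2$-grading of $S=\scr{S}_{\t^\perp}\hotimes\scr{S}$. The Weyl antisymmetry of $\index_T(\sf{E})$ then follows by a standard character-theoretic count of kernels.

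Since $\sf{E}\in K^\ad_G(\M)$, its pullback to $X$ is represented by an admissible cycle $(\E,\nabla^\E,Q)$ whose data is $N(T)\ltimes\Pi$-equivariant, using the $N(T)$-analogue of Definition \ref{d:admissible} recorded at the start of this subsection. The spinor bundle $\scr{S}|_X$ is $N(T)\ltimes\wh{\Pi}^{(\ell)}$-equivariant (Theorem \ref{t:spinor}), and the map $\nu\colon X^\circ\to\t^\perp$ is $N(T)\ltimes\Pi$-equivariant by construction. It follows that $\dot w$ acts unitarily on $L^2(X^\circ,\E\hotimes S)$, intertwines the $T$-action via conjugation by $w$, and satisfies $\dot w\,\DiracQ=\DiracQ\,\dot w$: for the potential term one has $\dot w\,\c_{\t^\perp}(\nu(x))\,\dot w^{-1}=\c_{\t^\perp}(w\nu(x))=\c_{\t^\perp}(\nu(\dot w\cdot x))$ by the Clifford covariance of the spin lift of $w$ acting on $\t^\perp$, while the $N(T)$-equivariance of the coupled operator $\Dirac_\scr{S}$ and of $Q$ handles the remaining terms. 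A minor technical point, which I would dispatch via Definition \ref{d:admissible}(c)-(d), is that $\dot w$ preserves $\dom(\DiracQ)=\dom(Q)\cap \dom(\DiracE)$.

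The substantive remaining input is the parity of $\dot w$ with respect to the $\bZ_2$-grading on $S$. The action on $X$, $\E$, and $\scr{S}$ is grading-preserving, so the only possible grading-reversing behavior comes from $\scr{S}_{\t^\perp}=\wedge\n^-\otimes\bC_\rho$. On the latter, $w$ acts through its lift to $\tn{Pin}(\t^\perp)$, which has parity $\det(w)=(-1)^{l(w)}$ because each simple reflection lifts to an odd element of the Clifford algebra. Equivalently, the virtual $T$-character of $\scr{S}_{\t^\perp}^+\ominus\scr{S}_{\t^\perp}^-$ is, up to the $\rho$-shift, the Weyl denominator, whose $W$-antisymmetry is precisely the Weyl antisymmetry of the Bott-Thom class $\scr{B}$ noted in Section \ref{s:DiracTrans}.

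Combining the two points, for each $\lambda\in\Lambda$ the operator $\dot w$ restricts to a linear isomorphism $\ker(\DiracQ_{[w^{-1}\lambda]})\to\ker(\DiracQ_{[\lambda]})$ that preserves the $\bZ_2$-grading iff $l(w)$ is even. Therefore
\[
\index(\DiracQ_{[\lambda]})=(-1)^{l(w)}\,\index(\DiracQ_{[w^{-1}\lambda]}),
\]
which is exactly the relation $w\cdot\index_T(\sf{E})=(-1)^{l(w)}\,\index_T(\sf{E})$ in $R^{-\infty}(T)$. The main subtlety in this plan is the parity of the spin lift, but since the paper already records this via the Weyl antisymmetry of $\scr{B}$, the argument is essentially bookkeeping once the $N(T)$-equivariant version of the admissible cycle is in hand.
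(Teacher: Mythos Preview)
Your approach is essentially the same as the paper's: both trace the $W$-antisymmetry to the Bott--Thom factor $\scr{S}_{\t^\perp}$, with the paper phrasing this via the KK-product decomposition $[\Dirac]=[\Dirac_{\scr{S}}]\cap \nu^*\scr{B}$ and you unpacking it at the operator level by tracking the parity of the Pin lift on $\scr{S}_{\t^\perp}$.

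One sign slip is worth flagging, though it is harmless for the conclusion. The commutation $\dot w\,\DiracQ=\DiracQ\,\dot w$ is not literally correct when $l(w)$ is odd. For the potential term, with the usual Pin conventions one has $\tilde w\, c_{\t^\perp}(v)\,\tilde w^{-1}=(-1)^{l(w)}c_{\t^\perp}(wv)$ under \emph{ordinary} conjugation (the twisted adjoint is what gives $c_{\t^\perp}(wv)$ without sign). More importantly, even granting $N(T)$-equivariance of $\Dirac_\scr{S}$ and $Q$ on $\scr{S}$ and $\E$ separately, the graded tensor product introduces the same sign: since $\tilde w$ on $\scr{S}_{\t^\perp}$ has parity $l(w)\bmod 2$ and $\Dirac_\scr{S}$, $Q$ are odd, conjugating $1\hotimes\Dirac_\scr{S}$ or $Q\hotimes 1$ by $\dot w$ produces a factor $(-1)^{l(w)}$ from the Koszul rule. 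The correct statement is therefore $\dot w\,\DiracQ\,\dot w^{-1}=(-1)^{l(w)}\DiracQ$. This still maps $\ker(\DiracQ)$ to itself and exchanges isotypical components as you say, so the parity argument and the final identity $\index(\DiracQ_{[\lambda]})=(-1)^{l(w)}\index(\DiracQ_{[w^{-1}\lambda]})$ go through unchanged.
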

\begin{proof}
The Dirac operator $\Dirac_{\scr{S}}$ is $N(T)$-invariant. The operator $\Dirac$ represents the KK-product of a K-homology class defined by $\Dirac_{\scr{S}}$ and the pullback of the $T$-equivariant Bott-Thom element $\scr{B}$ for $\t^\perp$. As mentioned in Section \ref{s:DiracTrans}, the latter element is $W$-antisymmetric, and the result follows from this; see \cite[Section 4.5]{LSQuantLG} for further details.
\end{proof}
Let $J=\sum_{w \in W}(-1)^{l(w)}e^{w\rho}$ be the Weyl denominator and let $V_\lambda$ be the irreducible representation of $G$ with highest weight $\lambda \in \Lambda_+$. There is an isomorphism $R^{-\infty}(G)\xrightarrow{\sim} R^{-\infty}(T)^{W-\tn{anti}}$ given by
\[ \sum_{\lambda \in \Lambda_+}n_\lambda \Tr_{V_\lambda}\mapsto \sum_{\lambda \in \Lambda_+}n_\lambda J\cdot \Tr_{V_\lambda}|_T=\sum_{\lambda \in \Lambda_+}c_\lambda \sum_{w \in W}(-1)^{l(w)}e^{w(\lambda+\rho)}. \]
Let $I^G_T$ be the inverse map. Then $I^G_T$ sends a Weyl numerator to the corresponding irreducible character. Equivalently $I^G_T$ is $|W|^{-1}$ times the Dirac induction map. Formally, we define a $G$-equivariant index map
\[ \index_G=I^G_T\circ \index_T \colon K^\ad_G(\M)\rightarrow R^{-\infty}(G).\]
In this case the non-abelian localization formula can be written in a slightly more symmetrical form,
\[ \index_G(\sf{E})=\sum_{\beta \in \B_+} I^G_T\index_T\big(\sum_{w \in W}\sigma_{w\beta} \otimes \sf{E}_{w\beta}\big), \]
where $\B_+=\B\cap \t_+$.

\section{Cohomological formulas}\label{s:cohomology}
In this section we derive two cohomological formulas for the index of an admissible class $\sf{E} \in K^\ad_T(\M)$ under additional hypotheses on the existence of a well-behaved equivariant Chern character form for $\sf{E}$ along $X$. Using a cohomological index formula for transversely elliptic operators due to Paradan-Vergne \cite{ParVerTransEll} (building on work of Berline-Vergne \cite{BerlineVergneTransversallyElliptic}) and a non-abelian localization formula in cohomology, we obtain a Kirillov-type formula for the index. Under an additional twisted equivariance assumption on the Chern character form we obtain an abelian localization formula involving integration over a compact manifold.

\subsection{Index formula for transversely elliptic operators}
A cohomological index formula for transversely elliptic operators due to Paradan and Vergne \cite{ParVerTransEll} can be applied to each term on the right hand side of Theorem \ref{t:nonabelloc}. The result is expressed in terms of $T$-equivariant characteristic forms: these are differential forms $\alpha(\xi)$ on $M$ depending smoothly on a parameter $\xi \in \t$ (sometimes only defined on a neighborhood of $0 \in \t$), defined by replacing the curvature with the equivariant curvature in the formula for the Chern-Weil representative, cf. \cite{BerlineGetzlerVergne, MeinrenkenEncyclopedia, ParVerTransEll}. For example the equivariant Todd form for a $T$-equivariant vector bundle with connection $(E,\nabla^E)$ is
\[ \Td(E,\xi)=\tn{det}_\bC\bigg(\frac{(\i/2\pi)F^E(\xi)}{1-\exp(-(\i/2\pi)F^E(\xi))}\bigg), \quad F^E(\xi)=(\nabla^E)^2-2\pi \i(\L^E_\xi-\nabla^E_\xi), \]
and is $\d_\xi$-closed where $\d_\xi=\d+2\pi \i\iota(\xi_X)$. 

The class of the symbol $\sigma$ in K-theory is the product of the class defined by the symbol $\sigma_{\scr{S}}$ of the Dirac operator for $\scr{S}$, and of the pullback under $\nu \colon X^\circ \rightarrow \t^\perp$ of the Bott-Thom element $\scr{B}$ for $0\in \t^\perp$ (see Definition \ref{d:symbol}). Consequently the equivariant Chern character of $\sigma$ is a product
\begin{equation}
\label{e:eqChsigma}
\Ch^u(\sigma,\xi)=\Ch^u(\sigma_{\scr{S}},\xi)\nu^*\Ch^u(\scr{B},\xi).
\end{equation} 
The Bott-Thom element $\scr{B}$ has equivariant Chern character
\begin{equation}
\label{e:ChBott}
\Ch^u(\scr{B},\xi)=(u\exp(\xi))^{-\rho}\tn{det}^{\n/\n^u}(1-u\exp(\xi))\tn{det}^{\n^u}\Big(\frac{1-\exp(\xi)}{\xi}\Big)\scr{T}_{(\t^\perp)^u}(\xi)
\end{equation}
where $\scr{T}_{(\t^\perp)^u}(\xi)\in \Omega_c((\t^\perp)^u)$ is an equivariant Thom form (whose pullback to $0 \in (\t^\perp)^u$ is $\tn{det}^{\n^u}_\bC(\xi)$). Hence one can arrange that the support of $\Ch^u(\sigma,\xi)$ is compact in the vertical direction in $T^*X^u$, and lies over a $\Pi$-invariant subset $X'\subset X^\circ\subset X$ such that $\nu(X')$ is a compact subset of $\t^\perp$ (and the latter subset of $\t^\perp$ can be chosen as small as one wishes by choosing $\scr{T}_{(\t^\perp)^u}(\xi)$ appropriately); this also has the consequence that $X'/\Pi$ is a compact subset of $X^\circ/\Pi$.

Another important ingredient is the equivariant differential form with generalized coefficients $P_\beta(\xi)$ (i.e. a generalized function of $\xi$ taking values in smooth differential forms on $X$---see \cite{KumarVergne} for general background) introduced by Paradan \cite{Paradan97,Paradan98}, given by the expression
\begin{equation} 
\label{e:Pbeta}
P_\beta(\xi)=\chi_\beta-\d\chi_\beta \cdot \Theta \int_0^\infty e^{s\d_\xi\Theta}\d s, \qquad \Theta=g(\kappa)\in \Omega^1(X)^T
\end{equation}
where $\chi_\beta$ is a $T$-invariant bump function equal to $1$ on a small neighborhood of $\Z_\beta=X^\beta \cap \phi^{-1}(\beta)$ (see \cite{Paradan98,ParVerTransEll} or the proof of Theorem \ref{t:deloc} below for further details). The oscillatory integral in equation \eqref{e:Pbeta} can be thought of as a replacement for the expression ``$-(\d_\xi \Theta)^{-1}$''. By construction the product $P_\beta(\xi)\Ch^u(\sigma,\xi)$ has \emph{compact} support near $Z_\beta^u \subset T^*U_\beta^u$.

The Paradan-Vergne formula \cite[Theorem 5.6]{ParVerTransEll} applied to $\sigma_\beta \otimes \sf{E}_\beta$ yields that for $u \in T$ and $\xi \in \t$ sufficiently small one has
\begin{equation} 
\label{e:cohomnonabel}
\index_T(\sigma_\beta \otimes \sf{E}_\beta)(u\exp(\xi))=\int_{U_\beta^u} P_\beta(\xi)\A\S^u(\sigma,\xi)\Ch^u(\sf{E}_\beta,\xi),
\end{equation}
where 
\begin{equation}
\label{e:AtiyahSinger}
\A\S^u(\sigma,\xi)=(-1)^{\dim(X^u)/2}\pi_\ast \frac{\Td(T_\bC X^u,\xi)\Ch^u(\sigma,\xi)}{\Ch^u(\bm{\lambda}_{-1}\nu_\bC(X,X^u),\xi)}
\end{equation}
is the integral over the fibres $\pi\colon T^*X^u \rightarrow X^u$ of the equivariant Atiyah-Segal-Singer integrand. The integral on the right hand side of equation \eqref{e:cohomnonabel} converges in the sense of generalized functions, i.e. the integrand should be smeared with a test function supported sufficiently near $0 \in \t$ before the integral over $U_\beta^u$ is carried out.

To make the Atiyah-Singer integrand more concrete, note that in case $u=1$, carrying out the integral over the fibres leads to
\begin{equation} 
\label{e:equivAS}
\A\S(\sigma,\xi)=\Ahat(X,\xi)\Ch(\scr{L},\xi)^{1/2}\nu^*\Ch(\scr{B},\xi) 
\end{equation}
where $\scr{L}$ is the anti-canonical line bundle (Definition \ref{d:spincmoment}). Moreover
\begin{equation}
\label{e:Chhalf}
\Ch(\scr{L},\xi)^{1/2}=e^{\varpi+2\pi \i \pair{\phi}{\xi}},
\end{equation}
where $\varpi$, $\phi$ were introduced in Definition \ref{d:spincmoment}. A similar formula exists for general $u \in T$, although it involves a phase factor that is somewhat cumbersome to describe (cf. \cite{BerlineGetzlerVergne, duistermaatheat, AMWVerlinde, YiannisThesis}). 

Equation \eqref{e:cohomnonabel} holds for $\xi$ contained in a ball of some radius $r_{\beta,u}>0$ in $\t$. The maximal $r_{\beta,u}$ depends on the geometry of the $T$-action near $Z_\beta$, and in general it can happen that $\{r_{\beta,u}|\beta \in \B\}$ has no positive lower bound. An important feature of our setting is that Proposition \ref{p:weakperiodicity} guarantees that this does not occur, i.e. one can find a uniform radius $r_u>0$ such that \eqref{e:cohomnonabel} holds for $\xi$ in a ball $B_{r_u}$ of radius $r_u$ in $\t$ for all $\beta \in \B$. 
Since $T$ is compact, we may find a finite set $u_1,...,u_N$ such that the sets $u_i\exp(B_{r_{u_i}})$ cover $T$. Let $\rho_i \in C^\infty(T)$, $i=1,...,N$ be a partition of unity subordinate to the cover. By \eqref{e:cohomnonabel},
\begin{equation} 
\label{e:cohomnonabel2}
\index_T(\sigma_\beta \otimes \sf{E}_\beta)(u)=\sum_{i=1}^N\rho_i(u)\int_{U_\beta^{u_i}} P_\beta(\xi_i)\A\S^{u_i}(\sigma,\xi_i)\Ch^{u_i}(\sf{E}_\beta,\xi_i),
\end{equation}
where $\xi_i=\log(u_i^{-1}u)$.

Our next aim is to replace the contributions \eqref{e:cohomnonabel} localized around the components $X^u\cap Z_\beta$ with a global expression involving integration over $X^u$. In brief the strategy (which we learned from \cite{ParVerSemiclassical}) is to use the non-abelian localization formula in cohomology (\cite{Paradan98}) in reverse. Since $X^u$ is non-compact and the integral over $X^u$ will only converge in the sense of generalized functions, the argument requires some care in choosing well-behaved differential form representatives.

We have already written the Atiyah-Segal-Singer integrand \eqref{e:AtiyahSinger}, \eqref{e:equivAS} in a way that suggests it makes sense globally on $X^u$. Recall that the Spin$_c$ structure $\scr{S}$ is $N(T)\ltimes \wh{\Pi}^{(\ell)}$-equivariant. Using invariant connections in the equivariant Chern-Weil construction yields a differential form representing $\A\S^u(\sigma,\xi)$ that is $\Pi$-periodic up to a phase factor:
\begin{equation} 
\label{e:ASPiEquiv}
\eta^*\A\S^u(\sigma,\xi)=u^{\ell \eta}e^{2\pi \i\ell \eta\cdot\xi}\A\S^u(\sigma,\xi) 
\end{equation}
for $\eta \in \Pi$; for example, in the case $u=1$, this is clear from \eqref{e:equivAS}: $\Ahat(X,\xi)\Ch(\scr{B},\xi)$ is $\Pi$-periodic, while $\Ch(\scr{L},\xi)^{1/2}$ changes by the phase factor $e^{2\pi \i\ell \eta\cdot\xi}$ due to \eqref{e:Piequivphi}.

\subsection{Chern character forms}
The next definition lists desirable properties of a globally defined differential form on $X$ whose pullback to each relatively compact subset $U_\beta$ represents the Chern character of $\sf{E}_\beta$.
\begin{definition}
\label{d:uniformChern}
Let $\sf{E} \in K^\ad_T(\M)$ be an admissible class. Let $\Ch^u(\sf{E},\xi)$, $u \in T$ denote any collection of smooth $T$-equivariant forms on $X^u$, depending smoothly on $\xi$, closed for the differential $\d+2\pi\i\iota(\xi_X)$, and having the following properties:
\begin{enumerate}
\item As $u$ varies, the family forms a \emph{bouquet} in the sense of \cite[Definition 60]{DufloVergneDescent}, \cite[p. 150]{vergne1996multiplicities}. In the abelian case this simply means: for $t \in T$ and $\xi \in \t$ sufficiently small, the pullback of $\Ch^u(\sf{E},\xi+\xi')$ to $X^{u\exp(\xi)}$ is $\Ch^{u\exp(\xi)}(\sf{E},\xi')$.
\item The $T$-equivariant cohomology class of the pullback of $\Ch^u(\sf{E},\xi)$ to $U_\beta^u$ is $\Ch^u(\sf{E}_\beta,\xi)$.
\item There is a compact subset $K \subset \t^*$ such that for all $u \in T$ and $x \in X^u$, the Fourier transform $\scr{F}_\t\Ch^u(\sf{E},\cdot)_x$ of the smooth function
\[ \xi \in \t \mapsto \Ch^u(\sf{E},\xi)_x \in \wedge T^*_xX_\bC \]
has support contained in $K$.
\item There are integers $m,m'\ge 0$ and a constant $C>0$ such that for all $u \in T$, $\xi \in \t$ and $x \in X^u$,
\begin{equation} 
\label{e:chernestimate}
|\Ch^u(\sf{E},\xi)_x|\le C(1+|\phi(x)|^2)^{m'/2}(1+|\xi|^2)^{m/2}.
\end{equation}
\end{enumerate}
\end{definition}
\begin{remark}
The assumption (d) could be expressed in terms of the Fourier transform: there exist $C,m,m'$ such that
\begin{equation} 
\label{e:chernestimatefourier}
\pair{\scr{F}_\t\Ch^u(\sf{E},\cdot)_x}{f}\le C(1+|\phi(x)|^2)^{m'/2}(\|f\|_\infty+\|\nabla f\|_\infty+\cdots+\|\nabla^mf\|_\infty),
\end{equation}
for all $f \in C^\infty_c(\t)$.
\end{remark}
\begin{example}
\label{ex:vbexample}
Suppose the pullback of $\sf{E}$ to $X$ can be represented by a smooth $T$-equivariant Hermitian vector bundle $E \rightarrow X$ of rank $r$ with compatible connection $\nabla^E$ having bounded curvature $\|F_E\|_\infty<C_1$ and such that the moment map determined by the connection $\phi_E \in \t^* \otimes C^\infty(X,\End(E))$ satisfies $\|\phi_E\cdot \xi\|_\infty<C_2|\xi|$. In particular any $T\times \Pi$-equivariant vector bundle has such a connection. Then the corresponding equivariant Chern-Weil form on $X^u$ (cf. \cite[Chapter 7]{BerlineGetzlerVergne}, \cite{MeinrenkenEncyclopedia}),
\[ \Tr_E(ue^{(\i/2\pi)F_E(\xi)}), \]
where
\[ \frac{\i}{2\pi}F_E(\xi)=\frac{\i}{2\pi}F_E+2\pi \i\phi_E\cdot\xi, \quad F_E=(\nabla^E)^2,\quad 2\pi\i\phi_E\cdot \xi=\L^E_\xi-\nabla^E_{\xi_X},\]
satisfies the conditions in Definition \ref{d:uniformChern}. Conditions (a),(b) are immediate. Conditions (c),(d) are straight-forward consequences of the bounds $\|F_E\|_\infty<C_1$, $\|\phi_E\cdot \xi\|_\infty<C_2|\xi|$. Indeed to keep the notation simple, consider the case where the rank of $E$ is $1$ (the general case is similar). Then $\phi_E$ can be viewed as a smooth $\t^*$-valued function with norm bounded by $C_2$. The Fourier transform of $\Tr_E(ue^{(\i/2\pi)F_E(\xi)})$ at the point $x \in X^u$ is the generalized function
\[ \nu\mapsto u_xe^{(\i/2\pi)F_{E,x}}\int_{\t} e^{-2\pi\i\pair{\nu-\phi_E(x)}{\xi}}\d\xi\]
which is $u_xe^{(\i/2\pi)F_{E,x}} \in \wedge T^*_x X_\bC$ times the delta distribution at $\phi_E(x) \in \t^*$. Thus we can take $K$ to be the closed ball in $\t^*$ of radius $C_2$, $m=m'=0$, and $C=C_1^{\dim(X)/2}$.
\end{example}

Under suitable analytic hypotheses Example \ref{ex:vbexample} could be generalized to construct a Chern character form for an admissible cycle $(\E,\nabla^\E,Q)$ via the equivariant Chern-Weil construction for the superconnection built from $\nabla^\E$ and $Q$ (\cite{BerlineGetzlerVergne}). We will not pursue this here. In any case for the Atiyah-Bott classes considered in a sequel article, it is more convenient to simply write down a candidate differential form (suggested by the families index theorem) and verify conditions (a)--(d) of Definition \ref{d:uniformChern} by other means.

\begin{remark}
Existence of a Chern character form for an admissible cycle $(\E,\nabla^\E,\phi_\E)$ satisfying the conditions in Definition \ref{d:uniformChern} is not automatic. For a somewhat artificial example, if $\M=LG\cdot \xi$ is the coadjoint orbit of $\xi \in \t \subset L\g^*$, then the space $X$ is a small thickening of a discrete space $W_\aff\cdot \xi$. Let $\E=X\times \ell^2(\bZ)$ where the $T$ action is trivial, the $\Pi$-action is the obvious one, and equip $\E$ with the trivial connection $\nabla^\E$. Choose operators $Q_{w\cdot \xi}$ on $\E_{w\cdot \xi}$, one for each connected component of $X$, that have compact resolvent, but such that $\index(Q_{w\cdot \xi})$ grows exponentially as $|w\cdot \xi|\rightarrow \infty$. Then $(\E,\nabla^\E,Q)$ is admissible in the sense of Definition \ref{d:admissible}, but there is no Chern character form satisfying \eqref{e:chernestimate}. There are similar examples where $T$ acts non-trivially on the fibres and the non-equivariant index of the operators $Q_{w\cdot \xi}$ is constant, but the multiplicity of some representation in the equivariant index grows exponentially.
\end{remark}

\subsection{Kirillov-type formula}
\ignore{
Hence one can take $\X$ $\chi$ the sum over $\beta \in \B$, which yields
\begin{equation} 
\label{e:sumbeta}
\index_T^\Dirac(\sf{E}_u)(t\exp(\xi))=\sum_{\beta \in \B}\int_{X_\beta^t} P_\beta(\xi)\A\S^t(\sigma,\xi)\Ch^u(\sf{E}_\beta,\xi) 
\end{equation}
valid for $\xi \in \t$ sufficiently small, with the infinite sum converging in the sense of distributions.}

The next result is a Kirillov-type (cf. \cite{BerlineVergneKirillovFormula}) formula for the index of an admissible K-theory class. 
\begin{theorem}
\label{t:deloc}
Let $\sf{E} \in K^\ad_T(\M)$ admit $T$-equivariant Chern character forms $\Ch^u(\sf{E},\xi) \in \Omega(X^u)$ satisfying the conditions in Definition \ref{d:uniformChern}. Then the non-abelian localization formula \eqref{e:Kthynonabelloc} converges in the sense of distributions to a distribution on $T$, and for $u \in T$ and $\xi \in \t$ sufficiently small
\begin{equation}
\label{e:deloc0}
\index_T(\sf{E})(u\exp(\xi))=\int_{X^u} \Ch^u(\sf{E},\xi)\A\S^u(\sigma,\xi).
\end{equation}
\end{theorem}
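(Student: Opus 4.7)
The plan is to combine Theorem \ref{t:nonabelloc} with the Paradan--Vergne formula \eqref{e:cohomnonabel}, and then to collapse the resulting sum over $\B$ by using Paradan's ``partition of unity'' identity $\sum_{\beta \in \B} P_\beta(\xi) = 1$ in the space of generalized $T$-equivariant differential forms on $X^u$. Concretely, for $u \in T$ and $\xi \in B_{r_u}$, combining Theorem \ref{t:nonabelloc} and \eqref{e:cohomnonabel} gives
\[
\index_T(\sf{E})(u\exp(\xi)) = \sum_{\beta \in \B} \int_{U_\beta^u} P_\beta(\xi)\, \A\S^u(\sigma,\xi)\, \Ch^u(\sf{E}_\beta,\xi).
\]
Since $P_\beta(\xi)\A\S^u(\sigma,\xi)$ has compact support near $Z_\beta^u \subset U_\beta^u$, condition (b) of Definition \ref{d:uniformChern} and Stokes' theorem allow replacement of $\Ch^u(\sf{E}_\beta,\xi)$ by the restriction of the globally defined form $\Ch^u(\sf{E},\xi)$, after which the integral may be extended from $U_\beta^u$ to $X^u$ at no cost.

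Next I would use Paradan's formula \cite{Paradan98, ParVerTransEll}: for the Kirwan vector field $\kappa$ associated to $\phi$ (or $\mu$), the generalized equivariant form $\sum_{\beta\in\B} P_\beta(\xi)$ equals $1$. Applied formally this yields
\[
\sum_{\beta\in\B}\int_{X^u} P_\beta(\xi)\,\A\S^u(\sigma,\xi)\,\Ch^u(\sf{E},\xi) = \int_{X^u}\A\S^u(\sigma,\xi)\,\Ch^u(\sf{E},\xi),
\]
which is the desired formula. Note that the integral on the right a priori only makes sense as a distribution in $\xi$: the Thom form factor in $\A\S^u(\sigma,\xi)$ via \eqref{e:ChBott} ensures compact support in the $\nu$-direction, but $X^u$ is still non-compact along the fibres of $\phi$.

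The main technical work is to justify this formal manipulation rigorously as an identity of distributions on $T$. I would pair both sides with a test function $f \in C_c^\infty(T)$ supported near some $u\in T$, use a finite partition of unity as in \eqref{e:cohomnonabel2} to reduce to the case of a small ball $B_{r_u}$ around $0 \in \t$, and decompose the integral over $X^u$ as a sum over $\Pi$-translates of a fundamental domain for the action on $X^u/\Pi$ (which is compact). Using the $\Pi$-quasi-periodicity of $\A\S^u(\sigma,\xi)$ established in \eqref{e:ASPiEquiv}, the $\eta$-translate contributes a phase $u^{\ell\eta}e^{2\pi\i \ell\eta\cdot\xi}$; condition (c) of Definition \ref{d:uniformChern} places the $\xi$-Fourier transform of $\Ch^u(\sf{E},\xi)$ in a fixed compact set $K \subset \t^*$, so pairing with a test function yields an oscillatory sum over $\eta \in \Pi$ with oscillator $e^{2\pi \i \ell\eta\cdot \xi}$. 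Because $\ell>0$, the frequencies $\ell\eta$ are separated and, combined with the polynomial bound \eqref{e:chernestimatefourier} on the Fourier side and the smoothness of the test function, standard Paley--Wiener/summation estimates show the sum converges absolutely after pairing with any $f$, and equals the pairing of the analytic index against $f$.

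The hard part is this last step, the distributional convergence and the rigorous interchange of the infinite sum over $\B$ with the integral over $X^u$. Beyond the periodicity and Fourier estimates, the argument also needs the uniform radius $r_u > 0$ provided by Proposition \ref{p:weakperiodicity}, so that \eqref{e:cohomnonabel} holds for \emph{all} $\beta \in \B$ on a common ball $B_{r_u}$. Once distributional convergence is established on each $B_{r_u}$, the partition of unity on $T$ glues the local identities into the global identity \eqref{e:deloc0}.
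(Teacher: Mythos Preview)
Your overall strategy matches the paper's: start from the Paradan--Vergne formula \eqref{e:cohomnonabel} for each $\beta$, replace $\Ch^u(\sf{E}_\beta,\xi)$ by the global form via condition (b), and then use Paradan's identity relating $\sum_\beta P_\beta(\xi)$ to $1$ to collapse the sum. However, there is a genuine gap in the justification of the last step.

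The identity you invoke is not $\sum_\beta P_\beta(\xi)=1$ but rather
\[
1 - \sum_{\beta\in\B} P_\beta(\xi) \;=\; \d_\xi\delta(\xi), \qquad
\delta(\xi)=(1-\chi)\Theta\int_0^\infty e^{s\d_\xi\Theta}\,\d s,
\]
so the discrepancy between the global integral and the sum of local contributions is $\int_{X^u}\d_\xi\delta(\xi)\cdot\A\S^u\Ch^u$. Since $X^u$ is non-compact, Stokes' theorem leaves a boundary term on an exhaustion $\partial X_j^u=\{|\phi|=r_j\}$, and this term involves the oscillatory $s$-integral from $\delta(\xi)$. The paper controls it via the estimate
\[
|\phi+s\phi_\Theta|^2=|\phi|^2+s^2|\phi_\Theta|^2+2s|\kappa|^2\ge r_j^2+2c^2 s
\quad\text{on }\operatorname{supp}(1-\chi)\cap\partial X_j^u,
\]
which depends crucially on the uniform lower bound $|\kappa|\ge c>0$ away from $\Z$ proved in Proposition~\ref{p:Kirwanpos}. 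Combined with the rapid decay of the Fourier transform $\scr{F}_\t\alpha_x$ (from conditions (c), (d) after pairing with a test function), this forces the boundary integral to vanish as $r_j\to\infty$.

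Your Paley--Wiener/$\Pi$-periodicity sketch addresses a different issue: it is roughly how the paper shows the global integral on the right-hand side is \emph{well-defined} as a distribution (the paper does this via direct Fourier estimates rather than a sum over $\Pi$-translates; note also that $\Ch^u(\sf{E},\xi)$ is \emph{not} assumed $\Pi$-periodic in Definition~\ref{d:uniformChern}, so ``the $\eta$-translate contributes a phase'' applies only to $\A\S^u$). But well-definedness of the right-hand side does not by itself yield equality with the localized sum; for that you need the Kirwan-vector-field estimate above to kill the $\d_\xi$-exact remainder, and this ingredient is missing from your proposal.
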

The integral in \eqref{e:deloc0} is meant in the distributional sense, i.e. the integrand should be paired with a test function supported sufficiently near $0 \in \t$ before the integral over $X^t$ is performed. Moreover the integration can be taken instead over the manifold without boundary $(X^\circ)^u$, since the support of the integrand is contained in a subset $(X')^u \subset (X^\circ)^u$ (which moreover has the property that $X'/\Pi$ is a compact subset of $X^\circ/\Pi$).
\begin{proof}
Let $f \in C^\infty_c(\t)$ with support contained in the region where $\A\S^t(\sigma,\xi)$ is defined. Recall $\Ch(\scr{L},\xi)^{1/2}=e^{\varpi+2\pi \i \phi\cdot \xi}$ and that $\phi(\eta\cdot x)=\phi(x)+\ell \eta$ for all $\eta \in \Pi$. Let
\[ \alpha(\xi)=\Ch^u(\sf{E},\xi)\A\S^u(\sigma,\xi)e^{-\varpi-2\pi \i \phi\cdot \xi}f(\xi).\]
The pairing $\pair{\Ch^u(\sf{E},\cdot)\A\S^u(\sigma,\cdot)}{f}$ of the integrand of \eqref{e:deloc0} with $f$, evaluated at $x \in X$ is
\begin{equation} 
\label{e:pairingwithf}
\int_{\t} \alpha_x(\xi)e^{\varpi_x+2\pi \i\phi_x\cdot\xi}\d\xi=e^{\varpi_x}(\scr{F}_\t\alpha_x)(-\phi_x)\in \wedge T^*_xX. 
\end{equation}
The Fourier transform of a product is the convolution of the Fourier transforms. The function $\gamma_x(\xi)=\A\S^u(\sigma,\xi)_x e^{-\varpi_x-2\pi \i \phi_x\cdot \xi}f(\xi) \in \wedge T^*_xX^\circ$ is smooth and compactly supported (since $f$ is), hence its Fourier transform $\scr{F}_\t\gamma_x$ is a Schwartz function. Under pullback by an element of the lattice $\eta \in \Pi$, $\A\S^u(\sigma,\xi) e^{-\omega-2\pi \i \phi\cdot \xi}$ transforms with a phase $u^{-\ell \eta}$ that does not depend on $\xi$. Since the support of $\A\S^u(\sigma,\xi)$ is contained in the inverse image of a compact subset $X'/\Pi$ of $X^\circ/\Pi$, it follows that the Schwartz semi-norms of $\scr{F}_\t\gamma_x$ are uniformly bounded as functions of $x$. The properties of $\Ch^u(\sf{E},\xi)$ (in particular equation \eqref{e:chernestimatefourier}) imply that the Schwartz semi-norms of the convolution $\scr{F}_\t\alpha_x$ are bounded (as a function of $x$) by a constant times $(1+|\phi_x|^2)^{m'/2}$. In particular the pointwise norm $|\scr{F}_\t\alpha_x(\nu)|$ (taken in $\wedge T_x^*X^u$) satisfies an estimate
\begin{equation} 
\label{e:dombyrapid}
|\scr{F}_\t\alpha_x(\nu)|\le (1+|\phi_x|^2)^{m'/2}p(|\nu|) 
\end{equation}
where $p\colon [0,\infty) \rightarrow [0,\infty)$ is a rapidly decreasing monotone function that can be chosen independently of $x \in X'$. Equations \eqref{e:pairingwithf}, \eqref{e:dombyrapid} show that the pairing $\pair{\Ch^u(\sf{E},\cdot)\A\S^u(\sigma,\cdot)}{f} \in \Omega(X^u)$ is integrable, and hence the right hand side of \eqref{e:deloc0} is well-defined.

To prove the result, we carry out non-abelian localization for the right hand side of \eqref{e:deloc0} and verify that the result coincides with \eqref{e:AtiyahSinger}. (The result is not an immediate consequence of \cite{Paradan98}, because the latter treats equivariant forms with \emph{polynomial} coefficients, and the results typically do not extend to smooth coefficients needed here.) Let  $\d_\xi=\d+2\pi \i\iota(\xi_X)$ denote the equivariant differential. The localization works more generally, for a $\d_\xi$-closed equivariant differential form $\alpha(\xi)$ with $C^\infty(\t)$-coefficients such that (i) the support of $\alpha(\xi)$ is contained in a $\Pi$-invariant subset $X' \subset X^\circ$ such that $X'/\Pi \subset X^\circ/\Pi$ is compact (in particular, $\phi$ is proper on the support of $\alpha(\xi)$), and (ii) $\scr{F}_\t\alpha_x$ satisfies an estimate as in \eqref{e:dombyrapid} where $p$ is rapidly decreasing and independent of $x\in X'$.

Recall Paradan's closed equivariant form with generalized coefficients, which appears in the index formula \eqref{e:cohomnonabel}:
\[ P_\beta(\xi)=\chi_\beta-\d\chi_\beta \cdot \Theta \int_0^\infty e^{s\d_\xi\Theta}\d s, \qquad \Theta=g(\kappa)\in \Omega^1(X)^T, \]
where $\chi_\beta \colon \ol{X}\rightarrow [0,1]$ is a smooth $T$-invariant bump function with compact support near $\Z_\beta$, equal to $1$ on a neighborhood of $\Z_\beta$, and such that $\{\chi_\beta|\beta \in \B\}$ have pairwise disjoint supports. Let
\[ \chi=\sum_{\beta \in \B} \chi_\beta, \qquad P(\xi)=\sum_{\beta \in \B}P_\beta(\xi)=\chi-\d\chi \cdot \Theta \int_0^\infty e^{s\d_\xi\Theta}\d s.\]
Thanks to Propositions \ref{p:weakperiodicity} and \ref{p:Kirwanpos}, the $\chi_\beta$ can be chosen such that: (i) the $\chi_\beta$ are all translates by elements of $\Pi$ of a finite collection $\chi_{\beta_*}$, $\beta^* \in \B^*$ (in particular $\d \chi$ is bounded), (ii) there is a constant $c>0$ such that $|\kappa(x)|\ge c$ for all $x \in \tn{supp}(1-\chi)$. Property (ii) plays a particularly important role below.

The form $P(\xi)$ satisfies (\cite[Proposition 2.1]{Paradan98})
\[1=P(\xi)-\d_\xi \delta(\xi), \qquad \delta(\xi)=(1-\chi)\Theta\int_0^\infty e^{s\d_\xi\Theta}\d s, \]
where as above, the meaning of the integral in the definition of $\delta(
\xi)$ is that the integrand should be paired with a test function on $\t$, before the integral with respect to $s$ is carried out. The desired result amounts to proving that
\begin{equation} 
\label{e:intexact}
\int_{X^u}\int_{\t}\d \xi \; \alpha(\xi)e^{\varpi+2\pi \i\phi\cdot \xi}\d_\xi \delta(\xi)=0.
\end{equation}
Let $r_j>0$ be a sequence of regular values of $|\phi|\colon X \rightarrow [0,\infty)$ such that $r_j \rightarrow \infty$ as $j \rightarrow \infty$, and let $X_j=|\phi|^{-1}([0,r_j])\cap X^\circ$, a smooth manifold with boundary. By Stokes' theorem, and using the definition of $\delta(\xi)$, \eqref{e:intexact} is equivalent to
\begin{equation}
\label{e:intexact2}
\lim_{j\rightarrow \infty}\int_{\partial X_j^u}(1-\chi)\Theta e^{\varpi} \int_0^\infty \d s \; e^{s\d \Theta}\int_{\t}\d\xi \; \alpha(\xi)e^{2\pi \i(\phi+s\phi_\Theta)\cdot \xi}=0,
\end{equation} 
where $\phi_\Theta\cdot \xi:=\iota(\xi_X)\Theta=g(\kappa,\xi_X)$. Performing the integral over $\t$ on the left hand side of \eqref{e:intexact2} yields
\begin{equation} 
\label{e:intexact3}
\lim_{j\rightarrow \infty}\int_{\partial X_j^u}(1-\chi)\Theta e^{\varpi} \int_0^\infty \d s \; e^{s\d \Theta}(\scr{F}_\t\alpha)(-\phi-s\phi_\Theta). 
\end{equation}
On $\tn{supp}(1-\chi) \cap \partial X_j^u$, we have
\begin{equation} 
\label{e:boundkappa}
|\phi+s\phi_\Theta|^2=|\phi|^2+s^2|\phi_\Theta|^2+2s|\kappa|^2\ge r_j^2+2c^2 s,
\end{equation}
with $c>0$. For $j$ large there are estimates, valid on $\partial X^u_j$, of the form
\begin{equation} 
\label{e:polybounds}
\tn{vol}(\partial X_j^u)\le C|r_j|^{\dim(\t)-1}, \quad |\Theta|\le C|r_j|, \quad |e^{s\d \Theta}|\le C(1+s)^{n/2}|r_j|^{n/2}, \quad |e^{\varpi}|\le C. 
\end{equation}
Therefore for $j$ large the expression \eqref{e:intexact3} is bounded by a constant times
\[ \lim_{j\rightarrow \infty} |r_j|^{\dim(\t)+n/2+m'/2}\int_0^\infty \d s\; (1+s)^{n/2}p\Big(\sqrt{r_j^2+2c^2 s}\,\Big), \]
which vanishes in the limit as $j \rightarrow \infty$ because $p$ is rapidly decreasing.
\end{proof}

\subsection{Formal series, formal change of variables and Poisson summation}\label{s:formal}
This section discusses preliminaries on formal deformations needed for the treatment of the fixed-point formula in the next section. Throughout $t$ is a formal variable, and if $A$ is an abelian group then $\bC A\ft$ denotes the space of formal power series in the variable $t$ with coefficients in $\bC \otimes A$.

The collection of $\infty$-jets of curves in the complexified torus $T_\bC$ forms a group $\J^\infty T_\bC$. In algebro-geometric terminology, these are the $\bC\ft$-points of $T_\bC$. An $\infty$-jet with base point $g \in T_\bC$ will be denoted $g_t \in \J^\infty_g T_\bC$ below, where the parameter $t \in \bR$ or $\bC$ (in the latter case we think of $\J^\infty T_\bC$ as holomorphic jets). 

An element $g_t \in \J^\infty_g T_\bC$ has a unique factorization
\begin{equation} 
\label{e:gz}
g_t=g\exp(g_t^+), \qquad g_t^+=\log(g^{-1}g_t) \in t\t_{\bC}\ft. %=t\xi_1+t^2\xi_2+\cdots 
\end{equation}
Jets of maps $T \rightarrow \bC$ based at $g \in T$ are in 1-1 correspondence with holomorphic jets of maps $T_\bC \rightarrow \bC$ based at $g$. If $F \in C^\infty(T)$ and $g \in T$, let $\J^\infty_{hol,g} F$ denote the holomorphic jet corresponding to the Taylor series of $F$ at the point $g$ (put more simply, we treat the variables in the Taylor series of $F$ at $g$ as complex variables). For $g_t \in \J^\infty_g T_\bC$ we define $F(g_t)=(\J^\infty_{hol,g} F) \circ g_t \in \J^\infty \bC=\bC\ft$ to be the composition of jets; in other words $F(g_t)$ is the result of substituting $g_t^+\in t\t_\bC\ft$ into the Taylor series of $F$ at the point $g$ and expanding in powers of $t$. The map 
\[ f \in C^\infty(T)\mapsto f(g_t)\in \bC\ft \] 
defines a formal series of distributions on $T$ supported at $g\in T$; we denote this formal series by $\delta_{g_t}\in \D'(T)\ft$. More generally if $f_t \in C^\infty(T)\ft$, say $f_t=f_0+tf_1+\cdots$, then $f_t(g_t)=f_0(g_t)+tf_1(g_t)+\cdots \in \bC\ft$. 

In the next section we encounter a deformation described by a formal 1-parameter family of vector fields 
\[ v_t=\sum_{j\ge 0}t^j v_j \in \t_\bC \otimes \bC R(T)\ft, \qquad v_j \in \t_\bC\otimes \bC R(T) \] 
on the complexified torus $T_\bC$. The latter determines an $\infty$-jet $\Phi_t$ of diffeomorphisms deforming the identity
\begin{equation}
\label{e:Phiz}
\Phi_t(u)=u\exp(t\ell^{-1}v_t(u)),
\end{equation}
where we have included the re-scaling $\ell^{-1}$ for later convenience. The corresponding $\Pi$-periodic $\infty$-jet of maps $\t_\bC\rightarrow \t_\bC$ is $\xi\mapsto \xi+t\ell^{-1}v_t(\exp(\xi))$. If $g_t \in \J^\infty_gT_\bC$ and $\Phi_t$ is as in \eqref{e:Phiz} then $\Phi_t(g_t)\in \J^\infty_g T_\bC$ is defined by composition of jets. Let $\d v_t \in \End(\t_\bC)\otimes\bC R(T)\ft$ be the differential (in the $T_\bC$ variables), and let $\det(1+t\ell^{-1}\d v_t)\in 1+t\bC R(T)\ft$ denote the complex determinant. The fixed-point problem
\[ \Phi_t(g_t)=g, \qquad g_t \in \J_g^\infty T_\bC \] 
is equivalent to
\begin{equation} 
\label{e:lambdaz}
g_t^++t\ell^{-1}v_t(g\exp(g_t^+))=0.
\end{equation}
for the formal series $g_t^+ \in t\t_\bC \ft$. On expanding each $v_j$ in Taylor series at $g$,
\[ v_j(g\exp(\xi))=\sum_{\alpha\ge 0}\frac{\xi^\alpha}{\alpha !}\partial^\alpha v_j(g),\]
equation \eqref{e:lambdaz} becomes a sequence of algebraic equations for the coefficients in the expansion of $g_t^+$ in powers of $t$ that can be solved recursively. For example the first two terms are
\[ g_t^+=-t\ell^{-1}v_0(g)-t^2(\ell^{-1}v_1(g)-\ell^{-2}\partial_{v_0(g)} v_0(g))+\cdots.\]
The equations for the higher terms quickly become more complicated, but in any case \eqref{e:lambdaz} has a unique solution. Define $\psi_t\in \t_\bC \otimes \bC R(T)\ft$ by $t\psi_t(g)=g_t^+$ where $g_t^+$ is the unique solution of \eqref{e:lambdaz}. Let $\Psi_t(u)=u\exp(t\psi_t(u))$, an $\infty$-jet of diffeomorphisms $T_\bC \rightarrow T_\bC$ deforming the identity. By construction, the composition
\[ \Phi_t\circ \Psi_t=\tn{Id}. \]
Recall the finite subgroup $T_\ell=\ell^{-1}\Lambda/\Pi \subset \t/\Pi=T$.
\begin{theorem}
\label{t:qPoisson}
Let $\Phi_t(u)=u\exp(t\ell^{-1}v_t(u))$, $v_t \in \t_\bC\otimes \bC R(T)\ft$. Then
\begin{equation} 
\label{e:qPoisson}
\sum_{\eta \in \Pi} \Phi_t(u)^{\ell\eta}=\frac{1}{|T_\ell|}\sum_{g \in T_\ell} \frac{\delta_{g_t}(u)}{\det(1+t\ell^{-1}\d v_t(g_t))} \in \D'(T)\ft
\end{equation}
where $g_t \in \J^\infty_g T_\bC$ is the unique solution to the fixed-point equation $\Phi_t(g_t)=g$.
\end{theorem}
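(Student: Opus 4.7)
The identity is a formal-in-$t$ deformation of classical Poisson summation for the finite subgroup $T_\ell = \ell^{-1}\Lambda/\Pi \subset T$. At $t=0$ one has $\Phi_0 = \id$ and $g_0 = g$, and the identity reduces to
\[
\sum_{\eta \in \Pi} u^{\ell\eta} \;=\; \frac{1}{|T_\ell|}\sum_{g \in T_\ell}\delta_g(u),
\]
which is the Fourier expansion of the Haar-normalized Dirac comb on $T_\ell$: the characters $\{u^{\ell\eta}\mid \eta \in \Pi\}$ are precisely those trivial on $T_\ell$, so they form the Pontryagin dual of $T/T_\ell$. The plan is to reduce the general identity to this classical case via a formal change of variables.

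The key algebraic observation, relying on abelianness of $T$, is
\[
\Phi_t(u)^{\ell\eta} \;=\; u^{\ell\eta}\exp\bigl(2\pi i\, t\, \eta\cdot v_t(u)\bigr),
\]
so the LHS is a formal deformation of $\sum_\eta u^{\ell\eta}$. Let $\Psi_t(u) = u\exp(t\psi_t(u))$ denote the $\infty$-jet inverse of $\Phi_t$ constructed in the paragraph preceding the theorem, so $\Psi_t(g) = g_t$. Pairing both sides of the theorem with a test function $f \in C^\infty(T)$ and performing the formal substitution $u = \Psi_t(w)$ on the LHS, the identity $\Phi_t(\Psi_t(w)) = w$, raised to the character $\ell\eta$, collapses the product $\Psi_t(w)^{\ell\eta}\exp(2\pi i\, t\, \eta\cdot v_t(\Psi_t(w)))$ back to $w^{\ell\eta}$. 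Combined with the formal Jacobian $\det(\d\Psi_t(w)) = \det(1+t\ell^{-1}\d v_t(\Psi_t(w)))^{-1}$, the LHS paired with $f$ becomes
\[
\int_T \Bigl(\sum_{\eta \in \Pi}w^{\ell\eta}\Bigr) f(\Psi_t(w))\det(\d\Psi_t(w))\,\d w,
\]
and applying the classical formula above to the inner sum, together with $\Psi_t(g) = g_t$, yields $\tfrac{1}{|T_\ell|}\sum_{g\in T_\ell} f(g_t)/\det(1+t\ell^{-1}\d v_t(g_t))$, matching the RHS paired with $f$.

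The main obstacle is to justify these manipulations as identities of formal power series in $t$, since $\Psi_t$ is only an $\infty$-jet and not an honest diffeomorphism of $T$. My plan is to verify the identity coefficient-by-coefficient in $t$. At order $t^m$, the expansion of $\Phi_t(u)^{\ell\eta}$ equals $u^{\ell\eta}$ times a polynomial $P_m(\eta,u)$ in $\eta$ of degree at most $m$ with coefficients in $\bC R(T)$. Via the correspondence $\eta\cdot\xi \leftrightarrow (2\pi i\ell)^{-1}\partial_\xi$ acting on the characters $u^{\ell\eta}$, the distributional sum $\sum_\eta P_m(\eta,u)u^{\ell\eta}$ equals a constant-coefficient differential operator along $T$ applied to $\sum_\eta u^{\ell\eta}$. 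Every step of the change of variables (exponential expansion, inversion $\Phi_t\to\Psi_t$, Jacobian expansion, and Taylor expansion of $f$ at $g$) produces at each order a finite algebraic identity, reducing the claim at order $t^m$ to applying the same differential operator to both sides of the classical Poisson formula. The resulting equality holds termwise, yielding the claimed identity in $\D'(T)\ft$.
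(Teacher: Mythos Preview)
Your outline is correct and the strategy is sound, but it takes a genuinely different route from the paper's proof.

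The paper avoids justifying a formal change of variables directly. Instead it restricts to test functions $f \in \bC R(T)$ (Laurent polynomials, which extend holomorphically to $T_\bC$), then uses Borel summation to realize $\psi_t$ by an honest smooth family $\bm{\psi}_t$, so that $\bm{\Psi}_t$ is a genuine biholomorphism of an annulus in $T_\bC$ for $t$ small and real. It then computes the Fourier coefficients of the right-hand side via an actual contour integral, performs the honest change of variables $u \leadsto \bm{\Psi}_t(u)$ (absorbing the Jacobian), and deforms the contour back to $T$ by holomorphy. Taking Taylor series at $t=0$ recovers the formal identity. This is more analytic but has the virtue that the change of variables is literal, not formal.

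Your purely formal approach is more elementary and avoids Borel summation and contour deformation, which is attractive. Two points deserve tightening. First, in your last paragraph the operator $P_m(\eta,u)$ has $u$-dependent coefficients, so it is a variable-coefficient (not constant-coefficient) differential operator applied to the Dirac comb; this is harmless but should be said correctly. Second, and more substantively, the phrase ``every step \dots\ produces at each order a finite algebraic identity'' is asserted but not shown. The cleanest way to close this gap is to follow the paper in restricting to $f \in \bC R(T)$: since $v_t \in \t_\bC \otimes \bC R(T)\ft$, the coefficient of $t^m$ in $f(u)\exp(2\pi \i t\,\eta\cdot v_t(u))$ is a Laurent polynomial in $u$ of bounded degree (independent of $\eta$), so the integral against $u^{\ell\eta}$ vanishes for all but finitely many $\eta$. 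The sum over $\Pi$ is then finite at each order, and the formal change of variables reduces to a finite sequence of ordinary integration-by-parts identities on $T$, which is exactly the ``finite algebraic identity'' you want. With that remark your argument is complete.
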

\begin{proof}
By expanding the left hand side of \eqref{e:qPoisson} in powers of $t$ and using the Poisson summation formula, one checks that it is a well-defined formal series of distributions with support contained in $T_\ell$. It suffices to check both sides of \eqref{e:qPoisson} agree when paired with test functions $f \in \bC R(T)$. The function $f$ extends uniquely to a holomorphic function on $T_\bC$ that we also denote by $f$. Using Borel summation (i.e. replacing $t^j$ by $t^j \rho_j(t)$, where $\rho_j$ is a bump function on $\bR$ with $\rho_j=1$ near $t=0$, and supports shrinking to $0$ as $j \rightarrow \infty$ at a rate depending on the properties of the series $\psi_t$) one constructs a smooth function $\bm{\psi}\colon \bR \times T_\bC \rightarrow \t_\bC$, denoted $(t,u)\in \bR \times T_\bC \mapsto \bm{\psi}_t(u)$ (holomorphic in $u$), whose Taylor series at $t=0$ is $\psi_t$. Let $\bm{\Psi}_t(u)=u\exp(t\bm{\psi}_t(u))$ be the corresponding map $\bR \times T_\bC \rightarrow T_\bC$. Since $\bm{\Psi}_0$ is the identity, smoothness in $t$ implies that, for $t$ small, $\bm{\Psi}_t$ restricts to a biholomorphism from a relatively compact annulus $\scr{A}\subset T_\bC$ containing $T$ to its image $\bm{\Psi}_t(\scr{A})$. Let $\bm{\Phi}_t$ denote its inverse. Note that the Taylor series of $\bm{\Phi}_t$ at $t=0$ is $\Phi_t$, and consequently the Taylor series of $\det(\d\bm{\Phi}_t)$ at $t=0$ is $\det(\d\Phi_t)=\det(1+t\ell^{-1}\d v_t)$. We will prove that for $t$ sufficiently small,
\begin{equation} 
\label{e:qPoisson2onT}
\sum_{\eta \in \Pi} \int_T f(u)\bm{\Phi}_t(u)^{\ell\eta}\d u=\frac{1}{|T_\ell|}\sum_{g \in T_\ell}\frac{f(\bm{\Psi}_t(g))}{\det(\d \bm{\Phi}_t(\bm{\Psi}_t(g)))}
\end{equation}
from which \eqref{e:qPoisson} follows by taking Taylor series in $t$ of both sides at $t=0$. 

For $u \in T$ define
\[ F(t,u)=\frac{1}{|T_\ell|}\sum_{g \in T_\ell}\frac{f(\bm{\Psi}_t(gu))}{\det(\d \bm{\Phi}_t(\bm{\Psi}_t(gu)))},\]
hence the right hand side of \eqref{e:qPoisson2onT} is $F(t,1)$. By construction $\xi\mapsto F(t,\exp(\xi))$ is not only $\Pi$-periodic but $\ell^{-1}\Lambda$-periodic, hence has a Fourier expansion (over the dual lattice $(\ell^{-1}\Lambda)^*=\ell \Pi$),
\begin{equation} 
\label{e:Fourierexp}
F(t,h)=\sum_{\eta \in \Pi} \hat{F}(t,\eta)h^{-\ell \eta} 
\end{equation}
where
\begin{align*}
\hat{F}(t,\eta)&=\int_{\t/\ell^{-1}\Lambda} F(t,u)u^{\ell \eta}\d u\\
&=\int_T \frac{f(\bm{\Psi}_t(u))}{\det(\d \bm{\Phi}_t(\bm{\Psi}_t(u)))}u^{\ell \eta}\d u.
\end{align*}
For the second equality, the sum over $T_\ell$ and integral over $\t/\ell^{-1}\Lambda$ were combined to yield the integral over $T=\t/\Pi$ (note $g^{\ell \eta}=1$ for $g \in T_\ell=\ell^{-1}\Lambda/\Pi$ and $\eta \in \Pi$). This is a contour integral over the contour $T \subset T_\bC$. Making the change of variables $\bm{\Psi}_t(u)\leadsto u$, the Jacobian factor is absorbed,\footnote{When $\dim(\t)=1$ for example, the change of variables here is of the kind
\[ \oint_\C g(\bm{\Psi}(w))\bm{\Psi}'(w)\d w=\oint_{\bm{\Psi}(\C)} g(w)\d w.\]}
and the integral becomes
\[ \hat{F}(t,\eta)=\int_{\bm{\Psi}_t(T)} f(u)\bm{\Phi}_t(u)^{\ell\eta}\]
with $\bm{\Psi}_t(T)\subset T_\bC$ the new contour. The integrand is a holomorphic function of $h$, so the integral does not change if the contour is deformed back to $T$ through the family of contours $\bm{\Psi}_{st}(T)$, $s \in [0,1]$, hence
\[ \hat{F}(t,\eta)=\int_T f(u)\bm{\Phi}_t(u)^{\ell\eta}\]
Substituting in \eqref{e:Fourierexp} and setting $h=1$ yields the result.
\end{proof}

\subsection{Fixed point formula}\label{s:abelloccoh}
In this section we use Theorem \ref{t:deloc} to derive a fixed-point formula for the index, under an additional `twisted' $\Pi$-equivariance assumption on the equivariant Chern character form.

Let $\sf{E}_t \in \bC K^\ad_T(\M)\ft$ be a formal series of admissible classes with $\bC$ coefficients. By applying the index map term-by-term in $t$, the index
\[ \index_T(\sf{E}_t)\in \bC R^{-\infty}(T)\ft \]
is defined. For example, if $\sf{E}\in K^\ad_T(\M)$ is admissible, then $\sf{E}_t=\exp(t\sf{E}) \in \bC K^\ad_T(\M)\ft$ is such a formal series, and
\[ \index_T(\exp(t\sf{E}))=\sum_{n=0}^\infty \frac{t^n}{n!}\index_T(\sf{E}^{\otimes n})\in \bC R^{-\infty}(T)\ft. \]
Without any additional difficulty one could consider series involving multiple formal variables.

\begin{definition}
\label{d:twistedequivariance}
Let $\sf{E}_t \in \bC K^\ad_T(\M)\ft$ be a series of admissible classes. We say that $\sf{E}_t$ \emph{admits infinitesimally twisted $T\times \Pi$-equivariant Chern character forms} if there exists a formal series of vector fields $v_t \in \t_\bC\otimes \bC R(T)\ft$ and a formal series of Chern character forms $\Ch^u(\sf{E}_t,\xi) \in \Omega(X^u)$ as in Definition \ref{d:uniformChern}, such that for all $\eta \in \Pi$,
\begin{equation} 
\label{e:etachern}
\eta^*\Ch^u(\sf{E}_t,\xi)=e^{2\pi \i tv_t(u\exp(\xi))\cdot \eta}\Ch^u(\sf{E}_t,\xi).
\end{equation}
We refer to the formal series $v_t$ as the \emph{infinitesimal twist}.
\end{definition}
This definition is motivated by Example \ref{ex:dbarondisk} and similar examples for more general $\Sigma$, taken up in detail in the sequel to this article. In brief, in these examples one has an admissible class $\sf{E}$ satisfying $\eta^*\sf{E}=\sf{E}+\nu\cdot \eta$ in $K^\ad_T(\M)$ for some $\nu \in \t\otimes R(T)$, and thus $\sf{E}_t=\exp(t\sf{E})$ obeys an equation of the form \eqref{e:etachern} in K-theory.
\begin{corollary}
\label{c:twistedinvariance}
Let $\sf{E}_t \in \bC K^\ad_T(\M)\ft$ admit infinitesimally twisted $T\times \Pi$-equivariant Chern character forms $\Ch^u(\sf{E}_t,\xi)$. Let $\Phi_t(u)=u\exp(t\ell^{-1}v_t(u))$ where $v_t$ is the infinitesimal twist. Then $\index_T(\sf{E}_t)\in \bC R^{-\infty}(T)\ft$ satisfies
\[ \Phi_t(g)^{\ell \eta}\index_T(\sf{E}_t)(g)=\index_T(\sf{E}_t)(g), \qquad \forall \eta \in \Pi.\]
\end{corollary}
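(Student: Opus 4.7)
The plan is to reduce the claim directly to the Kirillov-type formula \eqref{e:deloc0}, recognizing that the cocycle $\Phi_t(g)^{\ell\eta}$ is engineered to match the combined $\Pi$-transformation of the Atiyah-Singer integrand from \eqref{e:ASPiEquiv} and the twisted $\Pi$-transformation of the Chern character form from \eqref{e:etachern}.

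First I would reduce to a local statement. Since a formal series of distributions on $T$ is determined by its restrictions to neighborhoods of points, it suffices to fix $u \in T$ and verify the identity at $g = u\exp(\xi)$ for $\xi$ in a neighborhood of $0$ on which Theorem \ref{t:deloc} applies coefficient-by-coefficient in $t$, giving
\[
\index_T(\sf{E}_t)(u\exp(\xi))=\int_{X^u}\Ch^u(\sf{E}_t,\xi)\,\A\S^u(\sigma,\xi)
\]
as an equality of formal series whose coefficients are generalized functions of $\xi$ (paired with a test function before integration over $X^u$).

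Next I would compute the cocycle explicitly. Expanding $\Phi_t(g)=g\exp(t\ell^{-1}v_t(g))$ at $g=u\exp(\xi)$ and pairing with $\ell\eta \in \Pi$ via the inner product gives
\[
\Phi_t(u\exp(\xi))^{\ell\eta}=u^{\ell\eta}\,e^{2\pi\i\ell\eta\cdot\xi}\,e^{2\pi\i t\,v_t(u\exp(\xi))\cdot\eta}.
\]
The first two factors are exactly the $\Pi$-cocycle \eqref{e:ASPiEquiv} for $\A\S^u(\sigma,\xi)$, and the third is the twist \eqref{e:etachern} for $\Ch^u(\sf{E}_t,\xi)$. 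Therefore
\[
\Phi_t(u\exp(\xi))^{\ell\eta}\,\Ch^u(\sf{E}_t,\xi)\,\A\S^u(\sigma,\xi)=\eta^{*}\bigl(\Ch^u(\sf{E}_t,\xi)\,\A\S^u(\sigma,\xi)\bigr).
\]

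To conclude I would invoke change of variables on $X^u$. The $\Pi$- and $T$-actions on $X$ commute (their lifts to $\wh{LG}$ differ by a central $U(1)$ that acts trivially on $\M$), so $\eta$ preserves $X^u$; and since $\Pi \subset LG$ with $LG$ path-connected, $\eta$ acts by orientation-preserving diffeomorphism. The integrand is supported in a $\Pi$-invariant subset $X'\subset X^\circ$ with $X'/\Pi$ compact, and the Fourier-decay condition \eqref{e:chernestimatefourier} ensures that the pairing with a test function in $\xi$ is absolutely convergent on $X^u$, so Fubini and the identity $\int_{X^u}\eta^{*}\omega=\int_{X^u}\omega$ together collapse the cocycle. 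Back-substituting into \eqref{e:deloc0} produces the claim. I do not anticipate a serious obstacle; the only point requiring care is the simultaneous handling of the formal $t$-expansion and the distributional $\xi$-integration, both of which are controlled by estimates already established in the proof of Theorem \ref{t:deloc}.
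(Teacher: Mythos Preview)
Your proposal is correct and follows essentially the same approach as the paper's own proof: apply Theorem \ref{t:deloc}, identify $\Phi_t(g)^{\ell\eta}$ as the product of the $\Pi$-cocycles \eqref{e:ASPiEquiv} and \eqref{e:etachern} so that multiplying the integrand by it amounts to pulling back by $\eta$, and then use $\Pi$-invariance of $X^u$. Your write-up is somewhat more detailed (explicit expansion of the cocycle, remarks on orientation and convergence), but the structure of the argument is the same.
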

\begin{proof}
By Theorem \ref{t:deloc}, for $\xi$ sufficiently small
\[ \index_T(\sf{E}_t)(u\exp(\xi))=\int_{X^u} \Ch^u(\sf{E}_t,\xi)\A\S^u(\sigma,\xi).\]
Let $g=u\exp(\xi)$. Multiplying both sides by $\Phi_t(g)^{\ell \eta}$, using Definition \ref{d:twistedequivariance} as well as equation \eqref{e:ASPiEquiv}, yields
\[ \Phi_t(g)^{\ell \eta}\index_T(\sf{E}_t)(g)=\int_{X^u}\eta^* \big(\Ch^u(\sf{E}_t,\xi)\A\S^u(\sigma,\xi)\big)=\index_T(\sf{E}_t)(g)\]
since $X^u$ is $\Pi$-invariant.
\end{proof}

For a $T$-equivariant form $\alpha^g(\xi) \in \Omega(X^g)$ depending smoothly on $\xi$ (such as the Chern character form or Atiyah-Singer form considered above), we define the formal series of differential forms
\[ \alpha^{g_t}=\alpha^g(g_t^+)\in \Omega(X^g)\ft \]
where, similar to our discussion of functions in Section \ref{s:formal}, the right hand side is the result of substituting $g_t^+\in t\t_\bC\ft$ into the Taylor series at $0$ of the function $\xi \in \t \mapsto \alpha^g(\xi)\in \Omega(X^g)$ and expanding in powers of $t$.

\begin{theorem}
\label{t:abelianloc}
Let $\sf{E}_t \in \bC K^\ad_T(\M)\ft$ admit infinitesimally twisted $T\times \Pi$-equivariant Chern character forms $\Ch^u(\sf{E}_t,\xi)$. Then $\index_{T}(\sf{E}_t)$ has support contained in the finite set $T_\ell=\ell^{-1}\Lambda/\Pi \subset T$. Let $\Phi_t(u)=u \exp(t\ell^{-1}v_t(u))$ where $v_t$ is the infinitesimal twist. Let $g_t \in \J^\infty_g T_\bC$ be the solution of the fixed-point equation $\Phi_t(g_t)=g$. Then $\Ch^{g_t}(\sf{E}_t)\A\S^{g_t}(\sigma)\in \Omega(X^g)\ft$ is $\Pi$-invariant, and
\[ \index_{T}(\sf{E}_t)(h)=\frac{1}{|T_\ell|}\sum_{g \in T_\ell}\frac{\delta_{g_t}(h)}{\det(1+t\ell^{-1}\d v_t(g_t))}\int_{X^g/\Pi} \Ch^{g_t}(\sf{E}_t)\A\S^{g_t}(\sigma). \]
\end{theorem}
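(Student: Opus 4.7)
My plan is as follows. I would first derive the support statement. Corollary \ref{c:twistedinvariance} shows that $\index_T(\sf{E}_t)\in \bC R^{-\infty}(T)\ft$ is annihilated by $\Phi_t(h)^{\ell\eta}-1$ for every $\eta\in\Pi$. Since $\Phi_0=\tn{id}$, the $t=0$ coefficient of this operator is $h^{\ell\eta}-1$, which vanishes exactly on $T_\ell$; choosing finitely many $\eta_1,\dots,\eta_r\in \Pi$ whose joint zero set in $T$ equals $T_\ell$ and inverting $h^{\ell\eta_i}-1$ off $T_\ell$, a standard formal-deformation (Nakayama-style) induction on the order in $t$ forces every coefficient of $\index_T(\sf{E}_t)$ to be supported in $T_\ell$.

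To obtain the formula, I would localize near a fixed $g\in T_\ell$ and reduce to Theorem \ref{t:qPoisson}. Pick a test function $f\in \bC R(T)$ supported in a small neighborhood $V$ of $g$ with $V\cap T_\ell=\{g\}$ and so small that the delocalized formula Theorem \ref{t:deloc} applies at $u=g$. Writing $h=g\exp(\xi)$, Theorem \ref{t:deloc} gives
\[ \pair{\index_T(\sf{E}_t)}{f}=\int_{X^g}\int_{\t} f(g\exp(\xi))\,\Ch^g(\sf{E}_t,\xi)\A\S^g(\sigma,\xi)\,\d\xi. \]
Decompose $X^g=\bigsqcup_{\eta\in\Pi}\eta\cdot F_g$ for a fundamental domain $F_g$ of the $\Pi$-action. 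Combining \eqref{e:ASPiEquiv} and \eqref{e:etachern} with $g^{\ell\eta}=1$ (valid since $g\in T_\ell$), the twist phase $e^{2\pi\i t v_t(g\exp(\xi))\cdot\eta}$ combines with the phase $e^{2\pi\i\ell\eta\cdot\xi}$ to yield exactly $\Phi_t(g\exp(\xi))^{\ell\eta}$, so
\[ \eta^*\bigl(\Ch^g(\sf{E}_t,\xi)\A\S^g(\sigma,\xi)\bigr)=\Phi_t(g\exp(\xi))^{\ell\eta}\,\Ch^g(\sf{E}_t,\xi)\A\S^g(\sigma,\xi). \]
Pulling back each fundamental-domain integral, interchanging the $\Pi$-sum with the integrals, and applying Theorem \ref{t:qPoisson} at $u=g\exp(\xi)$ (only the $g$-term in the $T_\ell$-sum contributes, as $f$ is supported in $V$) yields
\[ \pair{\index_T(\sf{E}_t)}{f}=\frac{1}{|T_\ell|\det(1+t\ell^{-1}\d v_t(g_t))}\int_{F_g}\int_{\t} f(g\exp(\xi))\,\delta_{g_t}(g\exp(\xi))\,\Ch^g(\sf{E}_t,\xi)\A\S^g(\sigma,\xi)\,\d\xi. \]
Evaluating the delta $\delta_{g_t}$ substitutes $\xi=g_t^+=\log(g^{-1}g_t)$ into the smooth $\xi$-dependent form, producing the formal-series forms $\Ch^{g_t}(\sf{E}_t)$ and $\A\S^{g_t}(\sigma)$ and the factor $f(g_t)$. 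The $\Pi$-invariance of the integrand is immediate from the transformation rule above, because evaluating at $\xi=g_t^+$ gives the phase $\Phi_t(g\exp(g_t^+))^{\ell\eta}=\Phi_t(g_t)^{\ell\eta}=g^{\ell\eta}=1$; this lets us replace $\int_{F_g}$ by $\int_{X^g/\Pi}$. Summing over $g\in T_\ell$ produces the stated formula.

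The principal technical obstacle is the legitimacy of interchanging the infinite $\Pi$-sum with the integrals: the series $\sum_{\eta\in\Pi}\Phi_t(g\exp(\xi))^{\ell\eta}$ converges only as an element of $\D'(\t)\ft$, and its coefficient-wise pairing with the smooth form $\Ch^g(\sf{E}_t,\xi)\A\S^g(\sigma,\xi)$ and the integration over $F_g$ must be reordered. The hypothesis \eqref{e:chernestimate} of Definition \ref{d:uniformChern}, together with the $\Pi$-quasi-periodicity \eqref{e:ASPiEquiv} of $\A\S^g(\sigma,\xi)$ and the compactness of $X'/\Pi$, ensure (exactly as in the rapid-decay argument used in the proof of Theorem \ref{t:deloc}) that after smearing with $f$ the integrand over a fundamental domain decays rapidly in $\xi$; this controls each coefficient in the formal power series in $t$ uniformly in $\eta$ and justifies the interchange. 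Once this is in hand, Theorem \ref{t:qPoisson} and the definition of $\delta_{g_t}$ as a formal series of distributions supported at $g$ complete the argument.
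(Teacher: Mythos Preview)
Your proof is correct and uses the same core ingredients as the paper (Theorem~\ref{t:deloc}, the transformation laws \eqref{e:ASPiEquiv} and \eqref{e:etachern}, and Theorem~\ref{t:qPoisson}), but the route is genuinely different in one respect. The paper applies Theorem~\ref{t:deloc} at a \emph{general} base point $u\in T$, so after Poisson summation the delta at $g_t$ forces $\xi$ to the value $\xi_t=\log(u^{-1}g)+g_t^+$ with nonzero $t^0$-term $\xi_0=\log(u^{-1}g)$; the integrand $\alpha^u(\xi_t)$ still lives on $X^u$, and a further Atiyah--Bott--Berline--Vergne localization step (Lemma~\ref{l:qabelloc}) is needed to pass from $X^u/\Pi$ to $X^g/\Pi$, followed by the bouquet identity $\iota^*\alpha^u(\xi_t)/\Eul(\nu,\xi_t)=\alpha^{g_t}$. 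By first establishing the support statement and then taking $u=g$ from the outset, you arrange $\xi_0=0$, so the delta evaluation lands directly on $\alpha^g(g_t^+)=\alpha^{g_t}$ and no abelian localization is required. This buys you a shorter argument that bypasses Lemma~\ref{l:qabelloc} entirely; the paper's route, on the other hand, yields the support claim as a by-product of the formula rather than via a separate Nakayama-style induction, and treats all base points uniformly. Your justification of the $\Pi$-sum/integral interchange is adequate: once one integrates over the relatively compact fundamental domain first, the remaining object is $\sum_{\eta}\int_\t f(g\exp\xi)A(\xi)\Phi_t(g\exp\xi)^{\ell\eta}\,\d\xi$ with $A(\xi)=\int_{F_g}\alpha^g(\xi)$ smooth and $fA$ compactly supported, so each $t$-coefficient is a sum of rapidly decaying Fourier coefficients times polynomials in $\eta$, and convergence to the distributional pairing in Theorem~\ref{t:qPoisson} is immediate.
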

\begin{proof}
For $u \in T$ fixed and $\xi \in \t$ sufficiently small, let
\[ \alpha^u(\xi)=\Ch^u(\sf{E}_t,\xi)\A\S^u(\sigma,\xi) \]
denote the integrand in Theorem \ref{t:deloc}, and let $h=u\exp(\xi)$. Let $X_\diamond\subset X$ denote a fundamental domain for the $\Pi$ action. By Definition \ref{d:twistedequivariance} and equation \eqref{e:ASPiEquiv},
\[ \index_T(\sf{E}_t)(h)=\sum_{\eta \in \Pi}\Phi_t(h)^{\ell\eta}\int_{X_\diamond^u} \alpha^u(\xi).\]
The sum over $\Pi$ can be evaluated using Theorem \ref{t:qPoisson}:
\begin{equation} 
\label{e:sumoverPi}
\index_T(\sf{E}_t)(h)=\frac{1}{|T_\ell|}\sum_{g \in T_\ell}\frac{\delta_{g_t}(h)}{\det(1+t\ell^{-1}\d v_t(g_t))}\int_{X_\diamond^u} \alpha^u(\xi),
\end{equation}
where $g_t \in \J^\infty_g T_\bC$ is the solution of the fixed-point equation $\Phi_t(g_t)=g$. 

Fix $g \in T_\ell$ and consider the corresponding term in \eqref{e:sumoverPi}. Recall $h=u\exp(\xi)$. Since $u$ is fixed, while $\xi$ is small and variable, the Dirac delta distribution $\delta_{g_t}(h)$ leads to a constraint on $\xi$. Solving the constraint yields a series
\[ \xi_t=\xi_0+t\xi_1+\cdots=\log(u^{-1}g)+g_t^+ \in \t_\bC \ft \]
where $g_t=g\exp(g_t^+)$, $g_t^+\in t\cdot \t_\bC \ft$, and $\log(-)$ denotes the preferred logarithm when the argument is near $1 \in T_\bC$. The Dirac delta distribution in \eqref{e:sumoverPi} means we may replace $\alpha^u(\xi)$ by $\alpha^u(\xi_t)$. We claim that $\alpha^u(\xi_t)$ is $\Pi$-invariant, hence descends to $X^u/\Pi$. Indeed using Definition \ref{d:twistedequivariance} and equation \eqref{e:ASPiEquiv} once again, we have
\[ \eta^*\alpha^u(\xi_t)=\Phi_t(g_t)^{\ell\eta}\alpha^u(\xi_t)=g^{\ell \eta}\alpha^u(\xi_t)=\alpha^u(\xi_t), \]
using the definition of $g_t$ and the equation $g^{\ell \eta}=1$ which holds because $g \in T_\ell=\ell^{-1}\Lambda/\Pi$. Thus \eqref{e:sumoverPi} becomes
\[ \index_T(\sf{E}_t)(h)=\frac{1}{|T_\ell|}\sum_{g \in T_\ell}\frac{\delta_{g_t}(h)}{\det(1+t\ell^{-1}\d v_t(g_t))}\int_{X^u/\Pi} \alpha^u(\xi_t).\]
The integrand is closed for the differential $\d_{\xi_t}=\d+2\pi \i \iota(\xi_t)_M$ (where $(\xi_t)_M \in C^\infty(M,TM_\bC)\ft$ denotes the formal series of vector fields $(\xi_0)_M+t(\xi_1)_M+\cdots$) and localizes to the fixed point set of $\xi_0=\log(u^{-1}g)$ in $X^u$, which is $X^{u\exp(\xi_0)}=X^g$ (this holds when $\xi_0$ is sufficiently small, as we are assuming, cf. \cite{vergne1996multiplicities}). A small addendum to the usual abelian localization formula (see Lemma \ref{l:qabelloc} below)   yields
\[ \index_T(\sf{E}_t)(h)=\frac{1}{|T_\ell|}\sum_{g \in T_\ell}\frac{\delta_{g_t}(h)}{\det(1+t\ell^{-1}\d v_t(g_t))}\int_{X^g/\Pi} \frac{\iota^*\alpha^u(\xi_t)}{\Eul(\nu,\xi_t)},\]
where $\nu$ is the normal bundle to $X^g$ in $X^u$ and $\iota \colon X^g/\Pi \hookrightarrow X^u/\Pi$ is the inclusion. Using the properties of the Atiyah-Segal-Singer integrand and the bouquet property of the equivariant Chern forms, one finds (cf. \cite{BerlineVergneKirillovFormula})
\[ \frac{\iota^*\alpha^u(\xi_t)}{\Eul(\nu,\xi_t)}=\alpha^g(g_t^+)=\alpha^{g_t}.\]
This completes the proof.
\end{proof}

In the proof of the result above we used the following addendum to the standard abelian localization formula.
\begin{lemma}
\label{l:qabelloc}
Let $M$ be a $T$-manifold where $T$ is a compact torus with Lie algebra $\t$. Let $\xi_t=\sum_{j\ge 0}t^j\xi_j \in \t_\bC\ft$. Let $\beta \in \Omega_c(M)\ft^T$ be a compactly-supported $\d_{\xi_t}$-closed $T$-invariant $\bC$-valued differential form. Then
\[ \int_M \beta=\int_{M^{\xi_0}}\frac{\beta}{\Eul(\nu,\xi_t)}.\]
\end{lemma}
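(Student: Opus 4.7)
Plan: The proof adapts the standard Berline--Vergne abelian localization argument (cf.\ \cite{BerlineGetzlerVergne}) to the formal power series setting. The key point is that since the leading coefficient $\xi_0$ alone determines the fixed-point locus $F = M^{\xi_0}$ appearing on the right-hand side, a 1-form built from $\xi_0$ alone suffices to run the usual argument uniformly in $t$.

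Both sides of the claimed identity lie in $\bC\ft$: writing $\beta = \sum_k t^k\beta_k$ gives $\int_M\beta = \sum_k t^k\int_M\beta_k$, while on the right the equivariant Euler form $\Eul(\nu,\xi_t) \in \Omega(F)\ft$ has constant-in-$t$ term $\Eul(\nu,\xi_0)$, invertible because the action of $\xi_0$ on the fibres of $\nu$ has only nonzero eigenvalues on $F$. For the main construction, fix a $T$-invariant Riemannian metric $g$ on $M$ and set $\alpha = g(\overline{(\xi_0)_M},\cdot) \in \Omega^1(M)^T$ (the complex conjugation is used so that the 0-form part of $\d_{\xi_t}\alpha$ becomes a sum of squared norms; it is irrelevant if $\xi_0 \in \t$ is real, as is the case in the intended application). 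Using Cartan's formula, $T$-invariance of $g$, and commutativity of $\t_\bC$ (giving $[(\xi_t)_M,(\xi_0)_M] = 0$), one obtains $\L_{\xi_t}\alpha = 0$ and therefore $\d_{\xi_t}^2\alpha = 0$. The 0-form part of $\d_{\xi_t}\alpha = \d\alpha + 2\pi\i\,g(\overline{(\xi_0)_M},(\xi_t)_M)$ has nonzero constant-in-$t$ coefficient on $M\setminus F$, so $\d_{\xi_t}\alpha$ is invertible in $\Omega(M\setminus F)\ft$ via a geometric series that terminates by form-degree. Applying $\d_{\xi_t}$ to $(\d_{\xi_t}\alpha)(\d_{\xi_t}\alpha)^{-1}=1$ and using $\d_{\xi_t}^2\alpha = 0$ shows $(\d_{\xi_t}\alpha)^{-1}$ is $\d_{\xi_t}$-closed, so $\gamma := \alpha(\d_{\xi_t}\alpha)^{-1}$ satisfies $\d_{\xi_t}\gamma = 1$ on $M\setminus F$; combined with $\d_{\xi_t}\beta = 0$ this gives $\beta = \pm\d_{\xi_t}(\beta\gamma)$ there.

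To finish, let $U_\epsilon$ be a $T$-invariant $\epsilon$-tubular neighborhood of $F$. Stokes' theorem applied to the top-degree part of $\d_{\xi_t}(\beta\gamma)$ (where the $\iota((\xi_t)_M)$-contribution vanishes for dimensional reasons) yields $\int_{M\setminus U_\epsilon}\beta = \pm\int_{\partial U_\epsilon}\beta\gamma$, while $\int_{U_\epsilon}\beta \to 0$ coefficient-wise in $t$ as $\epsilon \to 0$ since $F$ has positive codimension. The remaining limit $\lim_{\epsilon\to 0}\int_{\partial U_\epsilon}\beta\gamma$ is evaluated in $T$-invariant normal coordinates on $\nu_F$: there the fibrewise structure of $\d_{\xi_t}\alpha$ is quadratic in the normal coordinate with coefficients determined by the action of $\xi_t$ on the fibre, and a fibrewise Mathai--Quillen / Gaussian-type integration produces $\int_F\iota^*\beta/\Eul(\nu,\xi_t)$. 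I expect the main technical obstacle to be this last local calculation: the higher-order-in-$t$ corrections to $\xi_t$ modify the fibrewise Gaussian, but since they are all proportional to $t$ (hence formally nilpotent in $\bC\ft$), the leading-order calculation coincides with the standard Berline--Vergne one and the $t$-corrections can be treated order-by-order.
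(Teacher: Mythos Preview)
Your argument is correct and follows the classical Berline--Vergne strategy, but the execution differs from the paper's in two respects worth noting. First, the paper does not work directly with the formal series: it proves the localization formula $\int_M\beta=\int_{M^{\xi_0}}\beta/\Eul(\nu,\xi)$ for \emph{genuine} $\xi\in\t_\bC$ in a neighborhood of $\xi_0$ (using that $M^\xi\subset M^{\xi_0}$ for such $\xi$, so one localizes to $M^{\xi_0}$ rather than $M^\xi$), observes that both sides depend smoothly on $\xi$, and then simply substitutes the formal series $\xi_t$. This sidesteps your final ``main technical obstacle'' entirely, since the local calculation is just the classical one. Second, rather than shrinking $\epsilon$-neighborhoods and a boundary limit, the paper uses a fixed bump function $\chi$ equal to $1$ near $M^{\xi_0}$ and the Paradan-type form $P(\xi)=\chi+\d\chi\,\theta_\xi(\d_\xi\theta_\xi)^{-1}$, which satisfies $1-P(\xi)=\d_\xi\big((1-\chi)\theta_\xi(\d_\xi\theta_\xi)^{-1}\big)$; the reduction to $M^{\xi_0}$ is then by the Thom isomorphism rather than an explicit fibre integral. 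Your choice to build the auxiliary $1$-form $\alpha$ from $\xi_0$ alone (whereas the paper uses $\theta_{\xi_t}$ depending on the full $\xi_t$) is harmless: any two such forms $\gamma_i=\alpha_i(\d_{\xi_t}\alpha_i)^{-1}$ differ by the $\d_{\xi_t}$-exact form $-\d_{\xi_t}(\gamma_1\gamma_2)$, so the boundary integrals agree. The paper's route is shorter because it leverages smooth dependence to avoid redoing the local computation in the formal setting; your route is more hands-on but equally valid.
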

\begin{proof}
We start by reviewing the standard Atiyah-Bott-Berline-Vergne abelian localization formula \cite{AtiyahBottLocalization,BerlineVergne1,BerlineVergne2}. For $\xi \in \t_\bC$ define the differential
\[\d_\xi=\d+2\pi \i \iota(\xi_M). \]
It squares to zero on the subspace $\Omega(M)^T$ of $T$-invariant forms. One has
\begin{equation} 
\label{e:intzero}
\int_M \d_\xi \beta=0 
\end{equation}
for any $\beta \in \Omega_c(M)$ (the subscript `c' meaning compactly supported), by Stokes' theorem and because $\iota(\xi_M)\beta$ has exterior degree less than $\dim(M)$.  

Let $g$ be a $T$-invariant Riemannian metric, which we extend $\bC$-bilinearly to $TM_\bC$. For $\xi \in \t_\bC$, let $\bar{\xi}$ be the complex conjugate, and define $\theta_\xi \in \Omega^1(M)$ by
\begin{equation} 
\label{e:thetaxi}
\theta_\xi(\cdot)=g(\bar{\xi}_M,\cdot).
\end{equation}
Then
\[ \d_\xi\theta_\xi=\d \theta_\xi+2\pi \i g(\bar{\xi}_M,\xi_M).\]
Since the component of $\d_\xi \theta_\xi$ of exterior degree $0$ is non-zero away from $M^\xi=\{\xi_M=0\}$, $\d_\xi\theta_\xi$ is invertible away from this submanifold. 

Now let $\xi_0 \in \t_\bC$ and let $\chi$ be a $T$-invariant bump function equal to $1$ near $M^{\xi_0}$ and with support contained in a tubular neighborhood $U$ of the latter. For $\xi \in \t_\bC$ sufficiently close to $\xi_0$ one has $M^\xi\subset M^{\xi_0}$, and consequently $\chi$ is also equal to $1$ on a neighborhood of $M^\xi$. Thus for $\xi$ sufficiently close to $\xi_0$, the differential form
\begin{equation} 
\label{e:Pxi}
P(\xi)=\chi+\d\chi \frac{\theta_\xi}{\d_\xi \theta_\xi} \in \Omega(M),
\end{equation}
is well-defined, as $\d\chi$ vanishes on a neighborhood of the subset $M^\xi$ where $\d_\xi \theta_\xi$ fails to be invertible.  By construction $P(\xi)\in \Omega(M)$ has support contained in a tubular neighborhood of $M^{\xi_0}$. Note that $(1-\chi)$ vanishes near $M^\xi$, so $(1-\chi)(\d_\xi\theta_\xi)^{-1}$ is also well-defined, which implies that the difference
\begin{equation} 
\label{e:PxiOne}
1-P(\xi)=\d_\xi\Big((1-\chi)\frac{\theta_\xi}{\d_\xi \theta_\xi}\Big)
\end{equation}
is $\d_\xi$-exact. Consequently if $\beta \in \Omega_c(M)^T$ is $\d_\xi$-closed then the integral
\[ \int_M \beta = \int_M \beta\cdot P(\xi)=\int_U \beta\cdot P(\xi) \]
localizes near $M^{\xi_0}$. Using the Thom isomorphism and the inverse of the equivariant Euler form $\Eul(\nu,\xi)$ of the normal bundle $\nu=\nu(M,M^{\xi_0})$, one easily reduces further to an integral over $M^{\xi_0}$, giving
\begin{equation} 
\label{e:abelloc}
\int_M \beta=\int_{M^{\xi_0}} \frac{\beta}{\Eul(\nu,\xi)}.
\end{equation}
This is the usual abelian localization formula.

Since \eqref{e:abelloc} depends smoothly on $\xi$ sufficiently close to $\xi_0$, it easily adapts to `formal deformations' $\xi_t=\xi_0+t\xi_1+t^2\xi_2+\cdots$ where $t$ is a formal variable. Just as above, $\d_{\xi_t}$ restricts to a differential on $\Omega(M)\ft^T$, and has the property in \eqref{e:intzero} for any $\beta \in \Omega_c(M)\ft$. We define $\theta_{\xi_t} \in \Omega^1(M)\ft^T$ by substituting $\xi_t$ for $\xi$ in \eqref{e:thetaxi}. Similar to before, the $t^0\cdot \Omega^0(M)$-component of $\d_{\xi_t}\theta_{\xi_z}$ is invertible away from $M^{\xi_0}$, hence we may define $P(\xi_t)$ by substituting $\xi_t$ for $\xi$ in \eqref{e:Pxi}, and the obvious analogue of \eqref{e:PxiOne} holds. The remaining discussion also goes through, and proves the lemma.
\end{proof}

\subsection{Weyl symmetry}
For $\sf{E} \in K^\ad_{N(T)}(X)$, we require that an equivariant Chern character $\Ch^u(\sf{E},\xi) \in \Omega(X^u)$ satisfy the bouquet condition for $N(T)$ in addition to the conditions in Definition \ref{d:uniformChern}. In particular $n_w\cdot\Ch^u(\sf{E},\xi)=\Ch^{wu}(\sf{E},w\xi)$ in $\Omega(X^{wu})$ where $n_w \in N(T)$ has image $w \in W=N(T)/T$. In the case of a series $\sf{E}_t\in \bC K^\ad_{N(T)}(X)\ft$ we require the infinitesimal twist $v_t \in (\t_\bC\otimes \bC R(T)\ft)^W$ be $W$-invariant. If these two additional conditions are satisfied then we say that $\sf{E}_t$ \emph{admits infinitesimally twisted $N(T)\ltimes \Pi$-equivariant Chern character forms}. In this case the $W$-antisymmetry (Proposition \ref{p:Weylanti}) and the twisted $\Pi$-symmetry (Corollary \ref{c:twistedinvariance}) combine into a twisted $W_{\tn{aff}}$-antisymmetry.

Recall the finite subgroup $T_\ell=\ell^{-1}\Lambda/\Pi\subset T$. Let $T_\ell^\reg=T_\ell\cap T^\reg$, where $T^\reg$ is the set of regular elements.
\begin{theorem}
\label{t:regsupp}
Let $\sf{E}_t \in \bC K^\ad_G(\M)\ft$ admit infinitesimally twisted $N(T)\ltimes \Pi$-equivariant Chern character forms $\Ch^u(\sf{E}_t,\xi)$. Let $\Phi_t(u)=u\exp(t\ell^{-1}v_t(u))$ where $v_t \in (\t_\bC\otimes \bC R(T)\ft)^W$ is the infinitesimal twist. Let $g_t \in \J^\infty_g T_\bC$ be the solution of the fixed-point equation $\Phi_t(g_t)=g$. Then $\index_T(\sf{E}_t)\in \bC R^{-\infty}(T)\ft$ has support contained in $T_\ell^\reg$ and
\begin{equation} 
\label{e:regsupp}
\index_T(\sf{E}_t)(h)=\frac{1}{|T_\ell|}\sum_{g \in T_\ell^{\reg}/W}\frac{\sum_{w \in W}(-1)^{l(w)}\delta_{wg_t}(h)}{\det(1+t\ell^{-1}\d v_t(g_t))}\int_{X^g/\Pi} \Ch^{g_t}(\sf{E}_t)\A\S^{g_t}(\sigma). 
\end{equation}
\end{theorem}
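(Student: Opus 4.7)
I will derive Theorem~\ref{t:regsupp} from Theorem~\ref{t:abelianloc} together with the Weyl antisymmetry of the index (Proposition~\ref{p:Weylanti}). The forgetful map $\bC K^\ad_G(\M)\ft\to \bC K^\ad_T(\M)\ft$ sends $\sf{E}_t$ to a $T$-admissible series with the same infinitesimal twist $v_t$, which is now $W$-invariant by assumption. In particular the hypotheses of Theorem~\ref{t:abelianloc} are satisfied, so
\[ \index_T(\sf{E}_t)(h)=\frac{1}{|T_\ell|}\sum_{g \in T_\ell} A_g\,\delta_{g_t}(h), \qquad A_g:=\frac{1}{\det(1+t\ell^{-1}\d v_t(g_t))}\int_{X^g/\Pi}\Ch^{g_t}(\sf{E}_t)\A\S^{g_t}(\sigma). \]
The remainder of the argument amounts to exploiting Weyl antisymmetry to collapse this sum to one over $T_\ell^\reg/W$.

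The main step is to show $A_{wg}=(-1)^{l(w)}A_g$ for every $w \in W$. The $W$-invariance of $v_t$ yields $\Phi_t(wu)=w\Phi_t(u)$; hence if $g_t$ solves $\Phi_t(g_t)=g$, then $wg_t$ solves $\Phi_t(wg_t)=wg$, and the uniqueness of the formal fixed-point solution (Section~\ref{s:formal}) forces $(wg)_t=wg_t$. Consequently the Weyl action on $\bC R^{-\infty}(T)\ft$ sends $\delta_{g_t}$ to $\delta_{(wg)_t}$. The distributions $\{\delta_{g_t}\}_{g \in T_\ell}$ are linearly independent over $\bC\ft$, since their leading $t^0$ terms are the Dirac distributions $\delta_g$ at distinct points of $T$. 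Equating the displayed formula with its $W$-antisymmetrization, which is guaranteed by Proposition~\ref{p:Weylanti}, and invoking this linear independence, yields $A_{w^{-1}g'}=(-1)^{l(w)}A_{g'}$ for all $g' \in T_\ell$ and $w \in W$.

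The support statement and final formula now follow from elementary combinatorics. If $g \in T_\ell$ is singular, then the Weyl stabilizer $W_g$ contains a reflection $s$ (stabilizers of $W$ acting on $T$ are generated by reflections), and $sg=g$ combined with the antisymmetry gives $A_g=A_{sg}=-A_g$, forcing $A_g=0$; hence $\index_T(\sf{E}_t)$ is supported in $T_\ell^\reg$. On $T_\ell^\reg$ the Weyl group acts freely, so regrouping the sum into $W$-orbits and using $A_{wg}=(-1)^{l(w)}A_g$ together with $(wg)_t=wg_t$ produces
\[ \frac{1}{|T_\ell|}\sum_{g \in T_\ell^\reg}A_g\,\delta_{g_t}(h)=\frac{1}{|T_\ell|}\sum_{[g] \in T_\ell^\reg/W}A_g\sum_{w \in W}(-1)^{l(w)}\,\delta_{wg_t}(h), \]
which is precisely \eqref{e:regsupp}. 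The main subtleties are the careful identification $(wg)_t=wg_t$ at the level of formal $\infty$-jets, and linear independence of the $\delta_{g_t}$ in the formal-power-series setting; once both are in hand the theorem reduces to Weyl-antisymmetrizing the $T$-equivariant fixed-point formula of Theorem~\ref{t:abelianloc}.
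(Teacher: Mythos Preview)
Your proof is correct, but it establishes the key antisymmetry $A_{wg}=(-1)^{l(w)}A_g$ by a genuinely different route than the paper. The paper verifies this relation \emph{directly} on the integrand: it uses the $N(T)$-bouquet condition on the Chern character forms (so $\Ch^{wg_t}(\sf{E}_t)=n_w\cdot \Ch^{g_t}(\sf{E}_t)$) together with the $W$-antisymmetry of the Atiyah--Singer form inherited from the Bott--Thom element $\scr{B}$ (so $\A\S^{wg_t}(\sigma)=(-1)^{l(w)}n_w\cdot \A\S^{g_t}(\sigma)$ in cohomology), and then reads off $f(wg_t)=(-1)^{l(w)}f(g_t)$ from the integral. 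You instead argue \emph{indirectly}: you invoke the abstract Weyl antisymmetry of the analytic index (Proposition~\ref{p:Weylanti}), match it against the expansion from Theorem~\ref{t:abelianloc}, and then extract the coefficient antisymmetry via linear independence of the $\delta_{g_t}$ over $\bC\ft$. Your approach has the advantage of not tracking how each differential-form factor transforms under $W$, and indeed never uses the $N(T)$-bouquet hypothesis on $\Ch^u(\sf{E}_t,\xi)$ explicitly---only the $W$-invariance of $v_t$ (to get $(wg)_t=wg_t$) plus Proposition~\ref{p:Weylanti}. The paper's approach is more self-contained at the level of the fixed-point formula and avoids the extra linear-independence step. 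Both lead to the same endgame: vanishing at singular $g$ via a reflection fixing $g_t$, and regrouping over $T_\ell^\reg/W$.
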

\begin{proof}
For $g \in T_\ell$, let
\[ f(g_t)=\frac{1}{\det(1+t\ell^{-1}\d v_t(g_t))}\int_{X^g/\Pi} \Ch^{g_t}(\sf{E}_t)\A\S^{g_t}(\sigma),\]
so that by Theorem \ref{t:abelianloc},
\[ \index_T(\sf{E}_t)(h)=\frac{1}{|T_\ell|}\sum_{g \in T_\ell}\delta_{g_t}(h)f(g_t).\]
Let $w \in W$ and let $n_w \in N(T)$ be a lift. By the bouquet property
\[ \Ch^{wg}(\sf{E}_t,w\xi)=n_w\cdot \Ch^g(\sf{E}_t,\xi)\]
hence $\Ch^{wg_t}(\sf{E}_t)=n_w\cdot \Ch^{g_t}(\sf{E}_t)$. On the other hand, because of the $W$-antisymmetry of the Thom-Bott element $\scr{B}$, the Atiyah-Singer integrand obeys
\[ \A\S^{wu}(\sigma,w\xi)=(-1)^{l(w)}n_w\cdot \A\S^u(\sigma,\xi) \]
in cohomology, hence $\A\S^{wg_t}(\sigma)=(-1)^{l(w)}n_w\cdot \A\S^{g_t}(\sigma)$. Consequently
\begin{equation} 
\label{e:symmf}
f(wg_t)=(-1)^{l(w)}f(g_t).
\end{equation}
Suppose $g \in T_\ell$ is non-regular. Then there is an element $w \in W$ of order $2$ such that $wg=g$. Since $v_t$ is $W$-invariant, the element $g_t \in \J^\infty_g(T_\bC)$ satisfies $wg_t=g_t$. Therefore \eqref{e:symmf} becomes
\[ f(g_t)=f(wg_t)=-f(g_t) \quad \Rightarrow \quad f(g_t)=0.\]
This shows that the support consists of regular elements. Choosing any set of representatives for the elements of $T_\ell^\reg/W$, the expression \eqref{e:regsupp} follows from \eqref{e:symmf}.
\end{proof}
Equation \eqref{e:regsupp} may be reformulated as an equality of $G$-invariant distributions on $G$:
\begin{equation} 
\label{e:inducedGdist}
\index_G(\sf{E}_t)=\frac{1}{|T_\ell|}\sum_{g \in T_\ell^{\reg}/W}\frac{(-1)^{|\mf{R}_+|}J(g_t)\delta_{g_t}}{\det(1+t\ell^{-1}\d v_t(g_t))}\int_{X^g/\Pi} \Ch^{g_t}(\sf{E}_t)\A\S^{g_t}(\sigma),
\end{equation}
where $\delta_{g_t} \colon C^\infty(G)^G\rightarrow \bC\ft$ is evaluation on the conjugacy class of $g_t$.

\bibliographystyle{amsplain}
\bibliography{../Biblio}

\end{document}